\documentclass[11pt]{article}
\usepackage[utf8]{inputenc}
\usepackage[T1]{fontenc}
\usepackage[left = 3cm, right = 3cm, top=3cm, bottom=3cm]{geometry}
\usepackage{setspace}
\usepackage{cellspace}
\setlength{\cellspacetoplimit}{8pt}
\setlength{\cellspacebottomlimit}{8pt}
\usepackage[x11names]{xcolor}
\usepackage{mathtools,amssymb,mathrsfs}
\usepackage{amsmath}
\usepackage{stmaryrd}
\usepackage{cellspace}
\usepackage{calc, ifthen, xspace}
\usepackage{tikz,tkz-tab}
\usepackage{fancybox}
\usepackage{amsthm}
\usepackage{enumitem}
\usepackage{empheq}
\usepackage{graphicx}
\usepackage{dsfont}
\usepackage{mdwlist}
\usepackage[numbers]{natbib}
\usepackage{hyperref}

\author{Corentin Faipeur\footnote{ENS Lyon -- corentin.faipeur@ens-lyon.fr}}

\title{Glauber dynamics and coupling-from-the-past for Gaussian fields}

\newtheorem{thm}{Theorem}[section]
\newtheorem{prop}[thm]{Proposition}
\newtheorem{defi}[thm]{Definition}

\newtheorem{lemme}[thm]{Lemma}

\newtheorem{rem}[thm]{Remark}

\newcommand{\E}{\mathbf{E}}
\renewcommand{\P}{\mathbf{P}}

\newcommand{\R}{\mathbf{R}}
\newcommand{\Q}{\mathbf{Q}}
\newcommand{\Z}{\mathbf{Z}}
\newcommand{\N}{\mathbf{N}}

\renewcommand{\d}{\mathrm{d}}

\newcommand{\ind}[1]{\mathbf{1}_{\{#1\}}}
\newcommand{\ssi}{\Longleftrightarrow}

\newcommand{\et}{\text{ and }}
\newcommand{\si}{\text{ if }}
\newcommand{\otw}{\text{ otherwise }}

\newcommand{\cas}[4]{\begin{cases}
#1 & #2\\
#3 & #4
\end{cases}}


\newcommand{\C}{\mathcal{C}}
\renewcommand{\L}{\mathcal{L}}
\newcommand{\M}{\mathcal{M}}
\newcommand{\Norm}{\mathcal{N}}
\newcommand{\PP}{\mathcal{P}}
\newcommand{\A}{\mathcal{A}}

\renewcommand{\S}{\mathcal{S}}

\newcommand{\as}[1]{\overset{\text{a.s.}}{#1}}

\begin{document}
\maketitle


\begin{abstract}
    We study the representation of stationary Gaussian Markov random fields as factors of i.i.d. processes, with a focus on their approximation by finitely dependent distributions. Our model is a Gaussian field on $\Z^d$ such that the conditional law of the field at any site is Gaussian of mean $\varepsilon$ times the average of its neighbours, and of variance 1.
    Building on coupling-from-the-past (CFTP) techniques, we prove that for sufficiently small $\varepsilon$, the distribution of the field can be written as an explicit factor of an i.i.d. process. Furthermore, we construct approximations by finitely dependent fields that are close in total variation to the original field, with exponential decay when the allowed range of dependence grows. 
    We first do the proof for a truncated version of this Gaussian model, showing in this case that the associated field admits a finitary coding with exponential tails, providing a new application of high-noise condition of \cite{HaggstromSteif} for an uncountable state space.
    The proof for the original model is more intricate. Our approach extends classical CFTP-based constructions by developing a stratified coupling method tailored to the continuous and unbounded nature of the Gaussian setting.
\end{abstract}

\section{Introduction and main results}
This paper is concerned with the existence of \emph{factor maps} from i.i.d. (independent and identically distributed) processes to stationary Markov random fields (precise definitions will be given below). For simplicity, we consider random fields on $\Z^d$ for some $d\geq 1$, although the theory can be naturally extended to more general graphs.
Originally, the interest for factor maps came from ergodic theory.
Processes of special interest in ergodic theory are the so-called \emph{Bernoulli shifts}, that are the random fields (initially in dimension $d=1$) which are isomorphic to an i.i.d. process, meaning that there is an almost everywhere invertible factor map (see section \ref{Factors of product measures} for a precise definition) between the two.
Ornstein proved in \cite{ornstein} the isomorphism theorem for Bernoulli shifts, which states that two i.i.d. processes on $\Z$ of equal entropy are in fact isomorphic; he further showed that factors of i.i.d. processes are Bernoulli shifts. These results extend to higher dimensions (see \cite{KatznelsonWeiss} and \cite{OrnsteinWeiss}).

\subsection{Markov random fields and Gibbs measures}\label{Markov random fields and Gibbs measures}
A random field with state space $S$ is a collection $X=(X_i)_{i \in \Z^d}$ of $S-$valued random variables indexed by $\Z^d$. Equivalently, the distribution of $X$ is a probability measure on $\Omega:=S^{\Z^d}$. In this paper, $S$ can be either a finite set, equipped with the discrete $\sigma-$algebra, or a subset of $\R$, equipped with the Borel $\sigma-$algebra. In both cases, $\Omega$ is equipped with the product $\sigma-$algebra. Elements of $\Omega$ are often called \emph{configurations}.
We focus on distributions on $\Omega$ that are invariant by the translations of $\Z^d$. Random fields whose distribution is translation-invariant are called \emph{stationary}.
We are more specifically interested in stationary random fields that moreover satisfy the \emph{local Markov property}: it says that the conditional distribution of the field at a given site $i$ of $\Z^d$ only depends on the values of the field on some finite neighbourhood around $i$.
\begin{defi}
    A random field $X$ is said to be a \emph{Markov random field} with boundary set $B\Subset \Z^d \setminus\{0\}$ (i.e. $B$ is a finite subset of $\Z^d \setminus\{0\}$) if its distribution $\mu$ satisfies for all $i \in \Z^d$,
    \begin{equation}\label{Markov_property}
        \mu(X_i \in \cdot \mid (X_j)_{j \neq i}) =\mu(X_i \in \cdot \mid (X_j)_{j \in i+B}) \quad \mu-\text{a.s.}
    \end{equation}
\end{defi}
When a Markov random field with boundary set $B$ is fixed, we will write $j \sim i$ when $j \in i+B$; since the Markov property remains true by taking a larger boundary set, we can always assume that $B$ is symmetric, so that $j \sim i$ is equivalent to $i\sim j$.

Stationary Markov random fields often arise as statistical mechanics models.
Below we briefly introduce the notions of specification and Gibbs measure, and refer to \cite{friedli_velenik_2017} and \cite{georgii} for a large overview of the theory.
A statistical mechanics model (on $\Z^d$) is defined through a potential $\Phi$, which is a collection
of functions $\Phi_\Delta:\Omega \to \R$, for every $\Delta \Subset \Z^d$. The energy of a configuration is represented by the \emph{Hamiltonian} of the model, defined for every $x\in \Omega$ and $\Lambda \Subset \Z^d$ by
$$H^\Phi_\Lambda(x)=\sum_{\Delta \cap \Lambda \neq \emptyset} \Phi_\Delta(x).$$
To ensure that the above sum is convergent, we assume that the potential satisfies $\Phi_\Delta \equiv 0$ for all $\Delta$ with diameter greater that a certain integer $r$; such potentials are said to have finite range.
Since we aim to define stationary random fields, we furthermore assume that the potential is translation-invariant, in the sense that for every $\Delta \Subset \Z^d$, $i \in \Z^d$ and $x\in \Omega$, one has $\Phi_{i+\Delta}(x)=\Phi_\Delta((x_{i+j})_{j\in \Z^d})$.
We endow the state space $S$ with an a priori measure $\lambda$, which is taken to be the counting measure if $S$ is countable and the Lebesgue measure if $S$ is an uncountable subset of $\R$. The associated product measure over $\Lambda \subset \Z^d$ is denoted $\lambda^\Lambda$.
The \emph{specification} of the model is a consistent set of conditional distributions $\pi^\Phi=(\pi^\Phi_\Lambda)_{\Lambda \Subset \Z^d}$: for all $\Lambda \Subset \Z^d$ and $\xi \in \Omega$, $\pi^\Phi_\Lambda(\cdot \mid \xi)$ is the probability measure on $S^\Lambda$ defined by
$$\pi^\Phi_\Lambda(\cdot \mid \xi):= \dfrac{1}{Z^\xi_\Lambda} \int_{S^\Lambda} \ind{x_\Lambda \in \cdot} \exp(-H^\Phi_\Lambda(x_\Lambda \xi_{\Lambda^c})) \ \d\lambda^\Lambda(x_\Lambda)$$
where $Z^\xi_\Lambda:=\int_{S^\Lambda}\exp(-H^\Phi_\Lambda(x_\Lambda \xi_{\Lambda^c}))\d\lambda^\Lambda(x_\Lambda)$ is just a renormalising constant, and $x_\Lambda \xi_{\Lambda^c}$ is the configuration that agrees with $x_\Lambda$ on $\Lambda$ but with $\xi$ on $\Lambda^c$.
A probability measure on $\Omega$ having the conditional distributions prescribed by $\pi^\Phi$ is called a \emph{Gibbs measure} for $\pi^\Phi$ (or equivalently for the potential $\Phi$).
In other words, $\mu$ is a Gibbs measure for $\pi^\Phi$ if for all $\Lambda \Subset \Z^d$,
\begin{equation*}\tag{DLR}\label{DLR}
    \mu\big( X|_\Lambda\in \cdot \mid \forall i \notin \Lambda, X_i = \xi_i\big) = \pi^\Phi_\Lambda(\cdot \mid \xi) \text{ for $\mu-$a.e } \xi \in \Omega,
\end{equation*}
where $X|_\Lambda:=(X_i)_{i \in \Lambda}$ is the restriction of $X$ to $\Lambda$.
The above relation is called the DLR equation, for Dobrushin, Landford and Ruelle, who introduced this formalism for the definition of statistical mechanics models, see \cite{Dobrushin} \cite{LanfordRuelle}.
When $S$ is finite or compact, existence of a Gibbs measure for a given potential follows from compactness of $\Omega$ (see e.g. Theorem 6.26 in \cite{friedli_velenik_2017}).
Uniqueness, however, is a more intricate question, which is central in statistical mechanics.

By the finite-range assumption, a random field whose distribution is a Gibbs measure is a Markov random field. In fact, it satisfies the local Markov property \eqref{Markov_property} by taking as boundary set the reunion of subsets $\Delta \Subset \Z^d$ that contain 0 and such that $\Phi_{\Delta} \not \equiv 0$, minus 0.
Conversely, it follows from the Hammersley-Clifford theorem \cite{HammCliff}, \cite{Averintsev}, \cite{Spitzer}, that the distribution $\mu$ of a stationary Markov random field can always be expressed as a Gibbs measure for some (finite range and translation-invariant) potential when the state space is finite and $\mu$ is fully supported.

A well-known example of stationary Markov random field, which is a fundamental model in statistical mechanics, is the \emph{Ising model}.
Fix $S=\{-1,1\}$ and let $\beta \in \R$. The potential of the Ising model on $\Z^d$ has the following simple form:
$$\forall \Delta \Subset \Z^d, \forall \omega \in \{\pm1\}^{\Z^d}, \ \Phi_\Delta(\omega)=\begin{cases}
    -\beta \omega_i \omega_j & \si \Delta=\{i,j\}, i \sim j \\
    0 & \otw
\end{cases}.$$
Its Hamiltonian is therefore given, for every $\Lambda \Subset \Z^d$ and $\omega \in \{\pm1\}^{\Z^d}$, by 
$$H_\Lambda(\omega) = - \beta \sum_{i \sim j \atop \{i,j\} \cap \Lambda \neq \emptyset} \omega_i \omega_j.$$
It is well known that for $d\geq 2$, there exists a critical parameter $\beta_c(d) \in (0, \infty)$ such that for all $\beta \in [0, \beta_c(d))$, there is a unique Gibbs measure $\mu_\beta$ satisfying the \eqref{DLR} equation, whereas for $\beta > \beta_c(d)$, several Gibbs measures coexist.
In fact, it can be shown that if one considers an increasing sequence of boxes $\Lambda_n \nearrow \Z^d$ and the measures $\mu_{\beta, \Lambda_n}^+:=\pi^{\Phi}_{\Lambda_n}(\cdot \mid +)$ where $+$ denotes the configuration with $+1$ everywhere, then for all $\beta\geq0$ this sequence of measures converges (for the weak convergence topology) towards a Gibbs measure $\mu_\beta^+$ (independent of the choice of the $\Lambda_n$). A Gibbs measure $\mu_\beta^-$ is obtained similarly.
One can then prove that the coexistence of several Gibbs measures is equivalent to $\mu_\beta^- \neq\mu_\beta^+$, and that $\mu_\beta^-$ and $\mu_\beta^+$ indeed differ if $\beta$ is sufficiently large.

\subsection{Factors of product measures}\label{Factors of product measures}
Let $\mu$ be a translation-invariant distribution on $S^{\Z^d}$ (Markovian assumption is not required for the following general definitions).
We say that $\mu$ is a \emph{factor of a product measure} or \emph{factor of an i.i.d. process} (abbreviated FIID in the following)
if there exists a probability space $(\mathcal{Y}, \mathcal{F}, \nu)$ and a measurable map $F : \mathcal{Y}^{\Z^d} \to S^{\Z^d}$ which commutes with all the translations of $\Z^d$ such that $\mu$ is the image of $\nu^{\otimes \Z^d}$ by $F$.
It is equivalent to say that there exists an i.i.d. $\mathcal{Y}-$valued process $Y= (Y_i)_{i \in \Z^d}$ such that $F(Y)$ has distribution $\mu$. 
The factor map $F$ is called a \emph{coding} from $\nu^{\otimes \Z^d}$ to $\mu$.

From statistical mechanics point of view, the notion of FIID is not satisfying since it does not catch phase transitions.
Indeed, it is known that the ``plus state'' of the Ising model, that is the measure $\mu_\beta^+$ defined in Section \ref{Markov random fields and Gibbs measures}, is an FIID for all $\beta \geq0$ (proved first by Ornstein and Weiss in an unpublished manuscript, then by Adams \cite{Adams}).
Van den Berg and Steif \cite{VdB_steif} then provide a way of detecting phase transitions through the notion of \emph{finitary} codings.
The \emph{coding radius} of the factor map $F$ is defined as the random radius around the origin that one needs to reveal in the i.i.d. process to determine the spin at 0.
More precisely, define the coding radius of $F$ at site $i \in \Z^d$ and configuration $y \in \mathcal{Y}^{\Z^d}$ as 
$$R_i(y) := \inf \{r \geq 0:\forall y' \in \mathcal{Y}^{\Z^d}, y'|_{B^i_r}=y|_{B^i_r} \implies  F(y)_i = F(y')_i\}$$
where $B^i_r := \{j \in \Z^d : ||j-i||_1 \leq r\}$ is the ball of radius $r$ around $i$ in $\Z^d$, and $y|_{B^i_r}$ is the restriction of $y$ to $B^i_r$.
When $Y$ is as above, the $R_i(Y)$ for $i \in \Z^d$ are random variables that are equally distributed, due to translation invariance.
Set $R:=R_0(Y)$ (the choice of 0 is made arbitrarily, by translation invariance of $\mu$); $R$ is called the coding radius of $F$.
If $R$ is almost surely finite, the coding is said to be \emph{finitary}. In this case, we say that $\mu$ is a \emph{finitary factor} of an i.i.d. process, abbreviated FFIID.
Moreover, when the coding radius satisfies $\P(R\geq n) \leq Ce^{-c n}$ for some $c,C>0$ and all $n \in \N$, we say that $\mu$ is an FFIID with \emph{exponential tails}.
The main result of \cite{VdB_steif} is that the presence of a phase transition is ``a fundamental obstruction for finitary coding''; in words, if there exists a distribution different from $\mu$ with the same specification, this prevents $\mu$ from being an FFIID.
This shows that the plus state of the Ising model on $\Z^d$ is \emph{not} FFIID if $\beta> \beta_c(d)$, despite being a Bernoulli shift.
It is also proved in \cite{VdB_steif} that in the regime of uniqueness, the Gibbs measure of the Ising model is an FFIID, with exponential tails if $\beta<\beta_c$, but with coding radius's $d-$th moment necessarily infinite at $\beta=\beta_c$ (this last result was jointly proved with Peres in the same paper).
Note that the existence of a finitary coding at the critical point follows from the continuity of the phase transition for the Ising model (\cite{Yang_2dIsing} \cite{AizenmanFernandez_cont_Ising} \cite{ADS_cont_Ising}).

The work of van den Berg and Steif makes use of coupling-from-the-past (CFTP) arguments, developed by Propp and Wilson \cite{propp_wilson} to build their famous exact sampling algorithm.
In particular, to construct a finitary coding for the Ising model up to the critical point, they implement a monotone coupling, which is possible only if $\beta \geq0$.
Another construction due to Häggström and Steif \cite{HaggstromSteif}, also relying on CFTP arguments, enables to get a finitary coding for the Ising model with $\beta$ close to 0, but possibly negative.
Although not specified in their paper, the coding has exponential tails (this is because in \cite{HaggstromSteif}, as in \cite{VdB_steif}, the authors are mostly interested in codings from finite-valued i.i.d. processes; this additional restriction makes the tail of the coding radius harder to control).
In fact, they prove the existence of a finitary coding for all stationary Markov random fields with finite state space which satisfy a \emph{high-noise} condition. This is a condition on specifications, which, with the notations of Section \ref{Markov random fields and Gibbs measures}, amounts to
\begin{equation}\label{eq:high_noise}
    \sum_{s \in S} \inf_{\xi \in \Omega} \pi_{\{i\}}^{\Phi}(\{s\} \mid \xi)>1-\frac{1}{2d}, \ i\in \Z^d.
\end{equation}
The left-hand side is called the \emph{multigamma admissibility} in \cite{HaggstromSteif}, following the terminology introduced in \cite{MurdochGreen} (it does not depend on $i$, by translation-invariance).
Note that \eqref{eq:high_noise} immediately implies the uniqueness condition obtained by van den Berg and Maes in \cite{VdBMaes}, so that we are necessarily in the regime of uniqueness of the Gibbs measure when the high-noise condition is fulfilled.
For the Ising model, it corresponds to taking $|\beta| < \frac{1}{4d}\log\left(1+\frac{2}{2d-1}\right)$.
In \cite{spinka10}, Spinka improved the result of \cite{HaggstromSteif}, proving that stationary Markov random fields (with finite state space) that satisfy exponential strong spatial mixing are FFIID with exponential tails, and this is a weaker condition than \eqref{eq:high_noise}.

We end this paragraph by making the connection between FFIID and finitely dependent processes.
For any integer $l \in \N$, we say that a measure $\mu$ on $\Omega$ has finite dependence of range $l$ (or $\mu$ is $l-$dependent) if for any $A,B \subset \Z^d$ such that $d(A,B) > l$, the restrictions $X|_A$ and $X|_B$ are independent, when $X$ has law $\mu$; $\mu$ is \emph{finitely dependent} if it is $l-$dependent for some $l \in \N$.
If $\mu$ is an FFIID with coding radius almost surely bounded by some constant, it is said to be a \emph{block factor} of an i.i.d. process. In this case, it is trivially finitely dependent (take $l$ equal to twice the almost sure bound on the coding radius).
More generally, the tail of the coding radius controls the distance (in total variation) of the field to finitely dependent processes.
Recall that the \emph{total-variation distance} between two probability measures $\mu$ and $\nu$ on a measurable space $(\Omega, \mathcal{F})$ is defined by $d_{\mathrm{TV}}(\mu, \nu) = \sup\{|\mu(A) - \nu(A)|, A\in \mathcal{F}\}$.
It is well known that we have $d_{\mathrm{TV}}(\mu, \nu) = 1- \sup \P(X = Y)$ where the supremum is taken over all coupling $(X,Y)$ of $\mu$ and $\nu$.
If $\mu$ is an FIID with coding radius $R$, for every $l\in \N$, there exists an $l-$dependent distribution $\nu_l$ on $\Omega$ such that for all $\Lambda \Subset \Z^d$,
\begin{equation}\label{eq:distance_to_l-dep}
    d_{\mathrm{TV}}(\mu|_\Lambda, \nu_l |_\Lambda) \leq |\Lambda| \P(2R > l).
\end{equation}
This is obtained by truncating the factor map. We give a proof of this fact below (see Proposition \ref{prop:radius_controls_distance_l-dep}), but we do not claim that this is a new result.
Conversely, Spinka proved in \cite{spinka13} that finitely dependent distributions are FFIID, when the state space is finite (or countable).


\subsection{Gaussian models and main results}
A lot of results regarding finitary codings for Markov random fields have been obtained when the state space $S$ is finite ---see \cite{spinka10} for a comprehensive review of the literature, and for results on various statistical mechanics models.
In this paper, our goal is to study the case when $S$ is an uncountable subset of $\R$. For concreteness, we consider an explicit Gaussian model and a truncated version, although the very same proof would carry over to more general setups under relatively mild assumptions on the marginals and the specification. 
For example, still in the Gaussian case, it applies directly to any finite-range Hamiltonian with small enough off-diagonal terms.
Also, things should work in a similar way for the corresponding integer-valued height models with the same Hamiltonians.
It is unclear whether there is a reasonable way to describe the minimal required assumptions, but the interested reader can have a look at the beginning of Section \ref{Proofs for the Gaussian models} to see which properties are needed.

\paragraph{Gaussian model.}
Fix $S=\R$ (thus $\Omega=\R^{\Z^d}$) and a parameter $\varepsilon \in (-1,1)$. For $m \in \R$, denote by $\Norm(m, 1)$ the normal distribution of mean $m$ and variance 1.
We consider a centered stationary Gaussian field $(X_i)_{i \in \Z^d}$ on $\Z^d$ such that the conditional distribution of $X_0$ given the rest of the configuration is Gaussian of variance $1$ and mean $\varepsilon \times$ (average of the neighbours). It means that its distribution, denoted $\mu_\varepsilon$, satisfies for all $i \in \Z^d$
\begin{equation}\label{def_mu}
    \mu_\varepsilon(X_i \in \cdot \mid (X_j)_{j\neq i})(\xi) = \Norm\Big(\frac{\varepsilon}{2d}\sum_{j, \Vert j-i\Vert_1=1} \xi_j, 1\Big), \text{ for $\mu_\varepsilon-$a.e. } \xi \in \R^{\Z^d}.
\end{equation}
By definition, $(X_i)_{i \in \Z^d}$ is a Markov random field, with boundary set $B=B^0_1 \setminus\{0\}$ the unit sphere of $\Z^d$.

\begin{rem}\label{rem:def_gaussian_field}
The above description of our model does not guarantee that such a Gaussian field is well defined.
However, assuming that it exists, we can identify from \eqref{def_mu} the coefficients of the inverse of the covariance operator.
If $\Gamma$ is the covariance of $(X_i)_{i\in \Z^d}$, then, given the formula for the density of a Gaussian vector, it follows that we must have $\Gamma^{-1} = I-T$ with $T=\left( \dfrac{\varepsilon}{2d} \ind{i \sim j} \right)_{i,j \in \Z^d}$ and $I$ the identity, when viewed as operators on $\ell^2(\Z^d)$.
The inverse of $I-T$ is well defined when $\varepsilon \in (-1,1)$, because the Neumann series $\sum_{n\geq 0} T^n$ converges in this case.
Indeed, for all $x \in \ell^2(\Z^d)$,
$$|\langle x,Tx\rangle |=\dfrac{|\varepsilon|}{2d} \left|\sum_{i \in \Z^d} \sum_{j,j \sim i} x_ix_j \right| \leq \dfrac{|\varepsilon|}{2d} \sum_{i,j, i \sim j} 2 |x_ix_j| \leq \dfrac{|\varepsilon|}{2d} \sum_{i \sim j} (x_i^2+x_j^2) =|\varepsilon| \sum_{i \in \Z^d} x_i^2,$$
thus 
$$\Vert T \Vert \leq |\varepsilon|<1,$$
where $\langle \cdot,\cdot \rangle$ denotes the usual scalar product on $\ell^2(\Z^d)$ and $\Vert \cdot \Vert$ the associated operator norm.
Then,
$$\langle x,(I-T)x\rangle \geq (1-|\varepsilon|)\, \langle x,x\rangle$$
so we find that $I-T$ is positive definite, so is $\Gamma=(I-T)^{-1}=\sum_{n\geq 0} T^n.$

Therefore, for every $\varepsilon \in (-1,1)$, there exists a unique Gaussian field $(X_i)_{i \in \Z^d}$ with mean 0 and covariance 
$$\operatorname{Cov}(X_i, X_j) =\Gamma(i,j)= \sum_{n \geq0} \left(\dfrac{\varepsilon}{2d}\right)^n \times (\text{number of paths of length $n$ from $i$ to $j$}),$$
and we define $\mu_\varepsilon$ as its distribution.
In the other way, it is now clear that \eqref{def_mu} holds for $\mu_\varepsilon$.
\end{rem}

The description of $(X_i)_{i \in \Z^d}$ through its conditional distributions corresponds to the DLR approach in statistical mechanics. The theory of Gaussian fields seen as Gibbs measures has been well developed in \cite{georgii} (Chapter 13). 
In fact, \eqref{def_mu} describes the singleton part of the specification of the model. It is a general fact (\cite{georgii} Theorem 1.33) that this is sufficient to characterise the whole specification when it has a positive density with respect to some a priori measure (which is the case with the Lebesgue measure in \eqref{def_mu}).
The formula of the Gaussian density allows us to identify from \eqref{def_mu} what is the Hamiltonian of the model:
\begin{equation}\label{hamiltonian}
    H_\Lambda(x) = \frac{1}{2} \sum_{i \in \Lambda} x_i^2 - \frac{\varepsilon}{2d} \sum_{i \sim j \atop \{i,j\} \cap \Lambda \neq \emptyset} x_ix_j, \ \forall \Lambda\Subset \Z^d, x\in \Omega.
\end{equation}
Theorems 13.24 and 13.26 in \cite{georgii} characterise the set of Gibbs measures associated with the above Hamiltonian.
It corresponds to the convex hull of the set of Gaussian field distributions of covariance $\Gamma$ and mean $m\in \R^{\Z^d}$ satisfying
$$\forall i \in \Z^d, \ m_i = \dfrac{\varepsilon}{2d} \sum_{j \sim i} m_j,$$
giving rise to uncountably many Gibbs measures. However, notice that $\mu_\varepsilon$ is the only Gibbs measure for this model which is translation-invariant (because $m_i=m$ for all $i$ implies $m=0$).
Up to renormalisation, in the case $\varepsilon >0$, the Hamiltonian defined in \eqref{hamiltonian} can be rewritten as the Hamiltonian of the discrete \emph{Gaussian free field} with mass $m$ satisfying $(1+m^2)^{-1}=\varepsilon$ (see e.g. \cite{friedli_velenik_2017} Chapter 8).
The case $\varepsilon=1$ would therefore corresponds to the massless ($m=0$) Gaussian free field, for which it is well known that no Gibbs measure exists in dimensions $1$ and $2$.

One of the main purposes of this paper is to investigate how close $\mu_\varepsilon$ is from finitely dependent distributions.
To do this, we propose to write $(X_i)_{i \in \Z^d}$ of law $\mu_\varepsilon$ as a factor of an i.i.d. process, and then to construct a finitely dependent random field that is close to $(X_i)_{i \in \Z^d}$ by truncating the factor map.
Since there are infinitely many distributions having the same specification as $\mu_\varepsilon$, there is no hope of getting a \emph{finitary} coding for $\mu_\varepsilon$ (recall the ``fundamental obstruction'' of \cite{VdB_steif}).
Nevertheless, we construct a (non-finitary) coding for $\mu_\varepsilon$, and obtain the same bound on the distance to finitely dependent fields as if we had a coding radius with exponential tails (though it does not directly follow from \eqref{eq:distance_to_l-dep}).
The following theorem is the main result of the paper.
\begin{thm}\label{thm_mu}
    For $|\varepsilon|$ small enough, 
    \begin{enumerate}
        \item $\mu_\varepsilon$ is a factor of an i.i.d. process.
        \item There exists $c>0$ such that for all $l\in \N$, there exists an $l$--dependent distribution $\nu_l$ on $\R^{\Z^d}$ satisfying for every finite subset $\Lambda \subset \Z^d$,
        $$d_{\mathrm{TV}}(\mu_{\varepsilon}|_\Lambda, \nu_l |_\Lambda) \leq |\Lambda| e^{-cl}.$$ 
    \end{enumerate}
\end{thm}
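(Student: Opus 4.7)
My plan is to reuse the high-noise CFTP construction established earlier in the paper for the truncated Gaussian model, and to lift it to the unbounded setting through a stratified coupling that simultaneously produces the $l$-dependent approximations required for part (2).

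For part (1), I would decompose the conditional Gaussian $\Norm(\varepsilon \bar\xi, 1)$ as a mixture over nested \emph{strata} $I_1 \subset I_2 \subset \cdots \subset \R$, calibrated so that, uniformly over $\bar\xi$ in a bounded range prescribed by the previous stratum's neighbour values, the density on each stratum admits a common minorising density independent of $\bar\xi$, with the mass on the base stratum $I_1$ exceeding the high-noise threshold $1-1/(2d)$ of \eqref{eq:high_noise} once $|\varepsilon|$ is small. The sample $X_i$ is then built level by level: at each level one either commits to the current stratum via its minorising density (which uses only auxiliary i.i.d.\ noise) or escapes to the next stratum. Running CFTP with this mechanism and declaring coalescence whenever the base-stratum density is used yields a measurable factor map from an i.i.d.\ process to $\mu_\varepsilon$; the map is not finitary in the strict sense --- which is consistent with the non-uniqueness of translation-invariant Gibbs measures for the specification --- but is well-defined, since for any tolerance $\delta > 0$ only finitely many strata are needed to determine $X_0$ up to precision $\delta$.

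For part (2), I would define $\nu_l$ by truncating the stratified CFTP at depth $l$: read only i.i.d.\ inputs within graph distance $l$ of each site, and if coalescence has not occurred, fall back to an independent default value. The resulting distribution is $O(l)$-dependent, and $l$-dependent in the stated sense after absorbing a multiplicative constant into $c$. Coupling $(X_i)\sim\mu_\varepsilon$ with $(X^{(l)}_i)\sim\nu_l$ through the shared noise, the event $\{X_i \neq X^{(l)}_i\}$ is contained in the union of two bad events: the CFTP back-trace starting at $i$ exits $B_l^i$ before a base-stratum coalescence, or the stratified mechanism requires a stratum of index $\gtrsim l$ somewhere along this back-trace. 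The first decays exponentially in $l$ by the classical high-noise CFTP estimate of \cite{HaggstromSteif}; the second decays super-exponentially by Gaussian tail bounds for the marginals of $\mu_\varepsilon$, whose covariance is explicit from Remark \ref{rem:def_gaussian_field}. A union bound combined with the standard inequality $d_{\mathrm{TV}}(\mu_\varepsilon|_\Lambda,\nu_l|_\Lambda) \leq \sum_{i\in\Lambda}\P(X_i\neq X^{(l)}_i)$ then yields the claimed bound $|\Lambda|e^{-cl}$.

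The main technical obstacle is the calibration of the strata: the base stratum must carry enough common minorising mass to satisfy \eqref{eq:high_noise} \emph{uniformly} over every boundary configuration reachable by the CFTP dynamics, while the escape probabilities between strata must decay fast enough for the overall scheme to coalesce with exponential tails. A naive single truncation fails because the infimum of the shifted Gaussian density over all $\varepsilon\bar\xi\in\R$ vanishes, so the bounded-state high-noise theorem cannot be applied directly; the stratification is precisely what decouples the ``common piece'' (controlled uniformly by minorisation) from the ``shift-dependent tail'' (controlled by Gaussian concentration). Making this decomposition consistent across sites --- so that a base-stratum coalescence at one site is not destroyed when a neighbour lies in a high stratum --- is the content of the ``stratified coupling method'' announced in the abstract.
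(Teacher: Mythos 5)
Your stratified-coupling outline captures the right mechanism --- nested strata $S_n$ replacing the single compact state space, a minorisation on the base stratum, a level-by-level update function, and a truncation to produce $\nu_l$ --- and you correctly identify the central difficulty (the decomposition must remain consistent across sites when neighbours sit in high strata). But two steps are not actually supplied and, as stated, would not close.

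For part (1), the sentence ``the map is \ldots well-defined, since for any tolerance $\delta>0$ only finitely many strata are needed to determine $X_0$ up to precision $\delta$'' is not a construction. The strata are \emph{increasing} sets, so a lower stratum index gives a coarser localisation, not finer precision; more fundamentally, the backward exploration in this regime never terminates, so one cannot obtain almost-sure coalescence from \emph{all} initial configurations by pushing $\tau$ far back. The paper circumvents this via Proposition~\ref{prop:strong_CFTP}: fix the distinguished configuration $\mathbf{0}$, define $X_0^*:=X_0^{\mathbf{0},\tau_0}(0)$ with $\tau_0$ below the first dry cutset above $(0,0)$, and show only that the $\mu$-measure of initial configurations $\xi$ with $X_0^{\xi,\tau}(0)=X_0^*$ tends to $1$ as $\tau\to-\infty$ (Lemma~\ref{lemma:X_rho_dry_marks}, which uses hypothesis~\eqref{condition_Ln}). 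That weakened CFTP principle, together with the dry-cutset geometry, is what replaces almost-sure coalescence here; none of it is in your outline.

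For part (2), attributing the exponential decay of the first bad event to ``the classical high-noise CFTP estimate of \cite{HaggstromSteif}'' is incorrect: you have already observed that $\gamma(\mu_\varepsilon)=0$, so marks are never independently ``open'' with a good probability, and the Bernoulli site-percolation bound of Lemma~\ref{extinction} does not apply. The relevant object is a long-range \emph{Boolean} percolation on update marks, where a mark $\rho'$ wets a mark $\rho$ at graph distance $k$ whenever $U_{\rho'}>q_k$, and subcriticality with exponential tails requires the summability condition~\eqref{condition_qn} and the external result quoted as Theorem~2.4 of \cite{Faipeur} --- not a union bound over independent sites. Related to this, your threshold $1-1/(2d)$ for the base stratum is the wrong condition: one needs the much stronger $4\sum_n|B|^{2n+1}(1-q_n)<1$, which forces $q_n\to1$ very fast, and this in turn is compatible with~\eqref{link_Ln_pn} only for an explicit, rapidly growing choice of strata ($L_n=L_1|\varepsilon|^{-(n-1)/2}$ against $q_n=1-ae^{-(2d)^n}$ in the paper). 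Leaving the calibration unspecified leaves the heart of the argument open.
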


This is analogous to the aforementioned result of \cite{HaggstromSteif} for the ``very high temperature'' Ising model, as we construct the coding without monotonicity argument (thus allowing $\varepsilon<0$) but, on the other hand, we need a kind of ``high-noise'' condition, that is, $|\varepsilon|$ small enough. However, the proof of \cite{HaggstromSteif} cannot be adapted in this context due to the unboundedness of the spins, contrary to the truncated case that we now introduce.

\paragraph{Truncated Gaussian model.}
For any $L>0$, one may consider a stationary random field $(X_i)_{i \in \Z^d}$ with state space $[-L,L]$, such that the distribution of $X_0$ knowing the rest of the configuration is Gaussian of mean $\frac{\varepsilon}{2d}\sum_{j \sim i} X_j$ conditioned to be in $[-L,L]$.
This corresponds to a Gibbs measure associated with the same Hamiltonian \eqref{hamiltonian} but on a different state space.
More precisely, define a specification $\pi^{\varepsilon,L}=(\pi^{\varepsilon, L}_\Lambda)_{\Lambda \Subset \Z^d}$ by
$$\forall\Lambda \Subset \Z^d, \xi \in [-L,L]^{\Z^d}, \ \pi^{\varepsilon, L}_\Lambda( \cdot \mid \xi) :=\int_{[-L,L]^\Lambda} \ind{x_\Lambda \in \cdot} \exp(-H_\Lambda(x_\Lambda \xi_{\Lambda^c})) \d \lambda^\Lambda(x_\Lambda). $$
Existence of a Gibbs measure compatible with $\pi^{\varepsilon,L}$ is guaranteed by compactness of the state space (see e.g. Theorem 6.26 in \cite{friedli_velenik_2017}).
Moreover, for $|\varepsilon|$ small enough ($|\varepsilon|<1/(2L^2)$ suffices), $\pi^{\varepsilon,L}$ satisfies the Dobrushin's condition of weak dependence (see Theorem 6.35 in \cite{friedli_velenik_2017} or Theorem 8.7 in \cite{georgii}) so there is a unique Gibbs measure specified by $\pi^{\varepsilon,L}$; call it $\mu_\varepsilon^L$.
We prove a result similar to Theorem \ref{thm_mu} for $\mu_\varepsilon^L$.

\begin{thm}\label{thm_mu_L}
Let $L>0$. For $|\varepsilon|$ small enough,
\begin{enumerate} 
    \item $\mu_\varepsilon^L$ is an FFIID with exponential tails.
    \item There exists $c>0$ such that for all $l\in \N$, there exists an $l$--dependent distribution $\nu_l$ on $[-L,L]^{\Z^d}$ satisfying for every finite subset $\Lambda \subset \Z^d$,
        $$d_{\mathrm{TV}}(\mu^L_{\varepsilon}|_\Lambda, \nu_l |_\Lambda) \leq |\Lambda| e^{-cl}.$$ 
\end{enumerate}
\end{thm}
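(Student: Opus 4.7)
The strategy is to apply the Häggström-Steif high-noise CFTP construction of \cite{HaggstromSteif}, adapted to the uncountable state space $[-L,L]$, after verifying the natural continuous analogue of the high-noise condition \eqref{eq:high_noise}:
$$q_\varepsilon := \int_{-L}^{L} \inf_{\xi \in [-L,L]^{\Z^d}} f^{\varepsilon,L}(s\mid \xi) \, \d s > 1 - \frac{1}{2d},$$
where $f^{\varepsilon,L}(\cdot \mid \xi)$ denotes the conditional density of $X_0$ given the configuration $\xi$ outside $0$. Once this is established, the adapted CFTP algorithm produces a finitary factor map with coding radius having exponential tails, and part 2 then follows immediately from Proposition \ref{prop:radius_controls_distance_l-dep}.

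To check the high-noise condition, write $m_\xi := \frac{\varepsilon}{2d}\sum_{j \sim 0} \xi_j$, so that $|m_\xi| \leq |\varepsilon| L$ and
$$f^{\varepsilon,L}(s \mid \xi) = \frac{\exp(-(s-m_\xi)^2/2)}{\int_{-L}^L \exp(-(t-m_\xi)^2/2) \, \d t}.$$
This depends on $\xi$ only through $m_\xi$, continuously in $(s, m_\xi)$, and $m_\xi$ ranges over the compact interval $[-|\varepsilon|L, |\varepsilon|L]$; hence the infimum in $s$ is attained and Borel measurable. As $\varepsilon \to 0$, $\inf_\xi f^{\varepsilon,L}(s \mid \xi)$ converges uniformly on $[-L,L]$ to the truncated standard Gaussian density, whose integral is $1$, so $q_\varepsilon \to 1$ and the high-noise condition holds for $|\varepsilon|$ small enough.

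For the CFTP construction, decompose the singleton conditional density at each site as the mixture
$$f^{\varepsilon,L}(\cdot \mid \xi) = q_\varepsilon \, g + (1-q_\varepsilon) \, r_\xi,$$
where $g$ is the probability density proportional to $\inf_\xi f^{\varepsilon,L}(\cdot \mid \xi)$ (common to all boundary conditions) and $r_\xi$ is the boundary-dependent residual density. Equip each site with an i.i.d.\ uniform variable on $[0,1]$, used first to choose between $g$ and $r_\xi$ via a Bernoulli($1-q_\varepsilon$) threshold, then to sample the selected density by inverse CDF. Running the Häggström-Steif recursion, the value at site $0$ is reconstructed by exploring backwards in time only those neighbours whose Bernoulli choice selected $r_\xi$: any neighbour that selected $g$ can be sampled without knowing its own neighbours and terminates that branch. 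Each neighbour is ``active'' independently with probability $1-q_\varepsilon$, and $2d(1-q_\varepsilon)<1$, so the exploration is a subcritical branching process on $\Z^d$; it almost surely stops in finite time and its total progeny has exponential tails, yielding the claimed finitary coding of $\mu_\varepsilon^L$.

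The main obstacle is not computational but conceptual: transferring the discrete Häggström-Steif setup to a continuous state space in a measurable, translation-equivariant way. One must check that $(s,\xi) \mapsto \inf_\xi f^{\varepsilon,L}(s\mid \xi)$, the normalising constant of $g$, and the inverse CDFs of $g$ and of $r_\xi$ are jointly measurable in their arguments, so that the site-wise sampling rule defines an honest measurable factor map. These verifications are routine given the explicit Gaussian form above. Finally, part 2 is a direct application of Proposition \ref{prop:radius_controls_distance_l-dep} to the coding produced in part 1, whose coding radius has exponential tails.
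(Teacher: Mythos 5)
Your proposal is correct and follows essentially the same route as the paper: show that $\gamma(\mu_\varepsilon^L)\to 1$ as $\varepsilon\to 0$ so that the high-noise condition $\gamma>1-\frac{1}{2d}$ holds for $|\varepsilon|$ small (you argue via compactness and uniform continuity, the paper via an explicit Lipschitz estimate in Lemma~\ref{lemma_eta_L}), then apply a Häggström--Steif-type CFTP whose backward exploration is dominated by a Galton--Watson process of mean $2d(1-\gamma)<1$, and deduce part 2 from the exponential coding-radius tail via Proposition~\ref{prop:radius_controls_distance_l-dep}. The only notable difference is presentational: your sketch of the CFTP recursion follows Häggström--Steif directly, whereas the paper's proof of Theorem~\ref{thm_compact} rephrases the same branching argument through a spin-system/duality formalism that it then reuses for the unbounded case of Theorem~\ref{thm_general}.
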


In Theorem \ref{thm_mu_L}, part 2 is a direct consequence of part 1 because of the exponential tails of the coding radius, according to \eqref{eq:distance_to_l-dep}.
Part 1 holds because when $|\varepsilon|$ is sufficiently small, $\pi^{\varepsilon, L}$ satisfies the high-noise condition \eqref{eq:high_noise} of Häggström and Steif.
They proved in \cite{HaggstromSteif} that high-noise Markov random fields are FFIID with exponential tails. Their result is stated for fields with finite state space, but the proof can be adapted to the compact case.
We present a different proof in this article, which can be seen as a preliminary version of the proof of Theorem \ref{thm_mu}.
Note that in \cite{ferrari}, the authors studied perfect sampling of truncated Gaussian vectors.
Our proof of Theorem \ref{thm_mu_L} is similar to their method, which they suggest in their final remark to implement for truncated Gaussian fields indexed by $\Z^d$, such as $\mu_\varepsilon^L$.

\paragraph{Idea of the proof.}
The proofs of Theorems \ref{thm_mu} and \ref{thm_mu_L} rely on a coupling-from-the past argument inspired by Propp and Wilson exact sampling algorithm \cite{propp_wilson}.
This is a classical technique to construct coding from i.i.d. processes, as described in \cite{VdB_steif}, \cite{HaggstromSteif}, \cite{spinka9}, \cite{spinka10}.
In essence, for a distribution $\mu$ on $\Omega$ (e.g. $\mu=\mu_\varepsilon$ or $\mu^L_\varepsilon$), we couple Markov processes of invariant law $\mu$ starting from all possible initial configurations.
We say that there is coalescence at a certain space-time position when all the processes coincide at this space-time position.
The evolution of the processes is governed by a dynamics analogous to the Glauber dynamics of the Ising model \cite{Glauber}.
Each transition consists of a new sampling of a single site of $\Z^d$, according to the conditional distribution given the rest of the configuration. We aim to maximise the \emph{coupling probability}, that is, the probability that a transition produces the same output at the updated site regardless of the configuration outside. This makes the processes coalesce, in the sense of Propp and Wilson. This allows us to get an exact sampling of distribution $\mu$, arising as an FIID.

To determine if there is coalescence at a certain space-time position, say $(0,0)$, we explore the dynamics backwards.
The value at $(0,0)$ in the different coupled processes is determined by the last update of site $0$. Say that the update is ``good'' if it makes all the processes coalesce. In this case, we can stop the exploration. Otherwise, the exploration continues from that time and all the neighbours of site $0$.
Therefore, the backward exploration is dominated by a Galton-Watson process of mean $|B| \times (1-$ maximal coupling probability).
This process extinguish almost surely if the maximal coupling probability is sufficiently large --- this happens exactly when $\mu$ satisfies the high noise assumption \eqref{eq:high_noise}. In this case, the value of the field at $(0,0)$ in the dynamics is independent of the initial configuration, if the initial time is chosen small enough. We will prove that $\mu_\varepsilon^L$ satisfies the high-noise assumption if $|\varepsilon|$ is small enough.

Then, we adapt the method to the case where the aforementioned maximal coupling probability is small, or even 0.
This is typically the case for $\mu_\varepsilon$.
The proposed approach involves stratification of the state space into different levels.
Rather than relying on the coalescence of potentially ``high'' values of the field, which is an unlikely outcome, we aim to decrease the level of the field first.
The idea is that during an update, if we know that in all the coupled Markov processes, the field on the neighbourhood of the updated site is below the level $n$, then it is likely that the new value of the field at the updated site falls below the level $n-1$; then, if all the values of the neighbours are below the lowest level, it is likely that the update makes the processes coalesce.
This translates into an interacting particle system that follows the knowledge about the level of the field we can get independently of the initial configuration.
This particle system admits a dual, which corresponds to the backward exploration of the dynamics, and that encodes the information required to achieve coalescence at the starting point. 
In contrast to the preceding case, where coalescence was likely within a single update, the backward exploration never stops: it is first necessary to ensure that the last update of site $0$ allows to coalesce from all possible values of the field below level 1; then, the second generation of updates seen in the exploration must allow the level to decrease from 2 to 1 etc. Therefore, it will not be true that we can have coalescence at $(0,0)$ with probability 1 by taking an initial time small enough, and we will need additional arguments.

\paragraph{Organisation of the paper.}
Theorems \ref{thm_mu} and \ref{thm_mu_L} will be in fact derived from more general results, respectively Theorems \ref{thm_general} and \ref{thm_compact}.
The Glauber dynamics and the coupling-from-the-past argument are described in Section \ref{Glauber dynamics for Gaussian model}. We also define the maximal coupling probability and compute it for $\mu_\varepsilon$ and $\mu_\varepsilon^L$.
The construction of a finitary coding under the high-noise condition is discussed in Section \ref{Proof of thm mu_L}, where we state and prove Theorem \ref{thm_compact}.
In Section \ref{Proof of thm mu}, we introduce the assumptions required for our stratification to work, define the aforementioned interacting particle system and its dual, and prove Theorem \ref{thm_general}.
Finally, in Section \ref{Proofs for the Gaussian models}, we prove Theorems \ref{thm_mu_L} and \ref{thm_mu} as consequences of Theorems \ref{thm_compact} and \ref{thm_general} respectively.

\paragraph{Acknowledgments.}
This work is part of a PhD thesis supervised by Vincent Beffara. I am grateful to him for introducing me to Glauber dynamics techniques, but above all for his ongoing support and guidance. This paper also owes a great deal to the anonymous referee for his feedback on the first version and his many suggestions for improvement.

\section{Glauber dynamics and maximal coupling probability}\label{Glauber dynamics for Gaussian model}
Throughout this Section, as well as Sections \ref{Proof of thm mu_L} and \ref{Proof of thm mu}, we fix $\mu$ the distribution of a stationary Markov random field $(X_i)_{i \in \Z^d}$, with boundary set $B \Subset \Z^d \setminus\{0\}$. Without loss of generality, we assume that $B$ is symmetric.

\subsection{Glauber dynamics}\label{Glauber dynamics}
In 1963, Roy J. Glauber \cite{Glauber} introduced a stochastic version of the Ising model, known as the \emph{stochastic Ising model} or \emph{Glauber dynamics} (see \cite{Holley} for an overview on the results for the Ising model).
It has now been generalised to a large class of statistical mechanics models.
Several versions of the Glauber dynamics coexist in the literature, the one presented and employed in this paper is also named the \emph{heat-bath} algorithm.
Roughly speaking, each site of $\Z^d$ is endowed with an ``exponential clock'', and when the clock at $i \in \Z^d$ rings, we update the value of the field at $i$ according to its conditional distribution given the rest of the configuration.
Below, we describe the \emph{graphical representation} (see \cite{Liggett1985InteractingParticleSystems} Chapter III.6 or \cite{durrett} Section 2) of the Glauber dynamics for $\mu$.

Let $\PP$ be a Poisson point process (PPP) of intensity $\# \otimes \d t \otimes \d u$, where $\#$, $\d t$, $\d u$ denotes the counting measure on $\Z^d$, the Lebesgue measure on $\R$ and the Lebesgue measure on $[0,1]$, respectively. Write $\P$ for the law of $\PP$. Points of $\PP$ are called the \emph{update marks} of the dynamics.
By properties of PPP, it means that sites of $\Z^d$ are independently endowed with a bi-infinite sequence $(T_n^\cdot,U^\cdot_n)_{n \in \Z}$, satisfying
$$\PP = \{(i,T^i_n, U^i_n), n \in \Z, i\in \Z^d\},$$
and such that $(T_n^\cdot)_{n \in \Z}$ is increasing.
This observation highlights that $\P$ is a product measure over $\Z^d$; we will build a factor map from this product measure to $\mu$.
For each $i\in \Z^d$, the $(T_n^i)_{n \in \Z}$ correspond to the times when site $i$ is updated in the dynamics, and the increments $(T_{n+1}^i - T_n^i)_{n \in \Z}$ are independent and have exponential distribution of parameter 1 (hence the terminology ``exponential clock'').
The $(U_n^i)_{i \in \Z^d, n \in \Z}$ are independent random variables distributed uniformly on $[0,1]$; the update of $i$ at time $T^i_n$ is made with the help of the extra randomness provided by $U_n^i$.
Suppose that we have an \emph{update function} $\varphi:S^B \times [0,1] \to S$, with the property that for all $\eta \in S^B$,
$$\int_0^1\ind{\varphi(\eta, u) \in \cdot } \d u= \mu(X_0 \in \cdot \mid (X_j)_{j \in B} = \eta),$$
meaning that if $U$ has uniform distribution on $[0,1]$ (denoted $\mathcal{U}[0,1])$, then $\varphi(\eta, U)$ has the same law as $X_0$ given that the field on $B$ coincide with $\eta$; note that by the Markov property, the conditional law of $X_0$ given $(X_i)_{i \neq0}$ depends only on the configuration on $B$. 
Then, for every update mark $(i,t,u)$, if the dynamics is in configuration $\xi \in \Omega$ before the update, then the new value of the field at $i$ is given by $\varphi( \xi |_{i+B}, u)$; the other values remain unchanged.
The property of the update function $\varphi$ guarantees that $\mu$ is preserved by the dynamics.

Note that we have defined the evolution of the dynamics on a time-line which is infinite in both directions. The graphical representation provides a coupling of processes starting from any time and any initial configuration, obtained by using the same PPP to construct them.
For $\tau \in \R$ and $\xi \in \Omega$, write $X^{\xi, \tau}:=(X^{\xi, \tau}_t)_{t \geq \tau}$ for the process starting at time $\tau$ with initial configuration $X^{\xi, \tau}_\tau = \xi$, and whose evolution is governed by the graphical representation of the Glauber dynamics.
More precisely, we have that for every $(i,t,u) \in \PP$ with $t>\tau$, a.s.
$$\forall j \neq i, X^{\xi, \tau}_t(j) = X^{\xi, \tau}_{t-}(j) \et X^{\xi, \tau}_t(i)=\varphi(X^{\xi, \tau}_{t-}|_{i+B},u).$$
Nevertheless, since there are infinitely many sites, we cannot define the first update after a given initial time $\tau \in \R$.
In other words, for every $\tau \in \R$, a.s.
$$\inf\{t > \tau : \exists i \in \Z^d, \exists u \in [0,1], (i,t,u) \in \PP \} = \tau.$$
An additional argument is therefore needed to ensure that the dynamics can indeed be constructed with the graphical representation.
The pitfall is that the first update of a given site (which is well defined) could require one to look infinitely far away.
Indeed, for each update one need to look at the neighbours, but it may happen that these neighbours needed to look at their own neighbours before etc.
A percolation argument of Harris \cite{HARRIS197266} (see also \cite{durrett} Section 2), handles this problem.
The idea is to take a random subgraph of $\Z^d$ defined as follows: fix a small positive number $t_0$;
for every $i,j \in \Z^d$ such that $i \sim j$, say that the edge $\{i,j\}$ is open if there exists $n\in \Z$ such that $\tau < T^i_n < \tau + t_0$ or $\tau < T^j_n < \tau + t_0$; otherwise, the edge is closed.
Hence, an open edge corresponds to the fact that an update of the system requires $i$ to look at $j$ or the reverse.
Consequently, if $i$ and $j$ are not in the same connected component in our random graph, then the values of the field at $i$ and $j$ at time $\tau + t_0$ are independent (assuming they were chosen independently at the initial time $\tau$).
Hence, one can compute separately the evolution up to time $\tau + t_0$ in each connected component. 
Harris's result is that for $t_0$ small enough, there is no infinite connected component for almost all realisation of the PPP.
Thus, each connected component have a first update time, so we can construct the dynamics up to time $\tau + t_0$.
Iterate the argument to construct the dynamics for all time $t\geq \tau$.
It follows that for any $t \geq \tau$, $X^{\xi, \tau}_t$ is a well-defined measurable function of the update marks between $\tau$ and $t$ and of the initial configuration.

\paragraph{Notations.}
In the following, we will often consider measurable events of the update marks. For two such events $A$ and $B$, we will write $A\as{\subset} B$ when for almost all realisation of the PPP, one has the inclusion $A \subset B$; in other words, $A\as{\subset} B$ means that $\P(A \setminus B)=0$.
Similarly, we write $A \as{=} B$ when both inclusion hold, that is when $\P(A \Delta B)=0.$

\subsection{Coupling-from-the-past}\label{Coupling-from-the-past}
The main idea to construct a coding for $\mu$ from the PPP is to rely on the \emph{coupling-from-the-past} algorithm, introduced by Propp and Wilson \cite{propp_wilson}, extended to the infinite-volume setting.
It allows to get an exact sampling of distribution $\mu$, by coupling Markov processes with invariant law $\mu$ started from all possible initial configuration at a random negative initial time: if the initial time is far enough from 0 so that all the processes take the same value at time 0, it follows that this common value has law $\mu$.

Considering the processes $(X^{\xi, \tau})_{\xi \in \Omega, \tau \in \R}$ defined above, we say that there is \emph{coalescence} from $\tau$ at space-time position $(i,t) \in \Z^d \times (\tau, +\infty)$ when $$\forall \xi, \xi' \in \Omega, X^{\xi, \tau}_t(i)=X^{\xi', \tau}_t(i).$$
This is a measurable event of the update marks between $\tau$ and $t$. Note that if we have coalescence at $(i,t)$ from $\tau$, it follows that we also have coalescence at $(i,t)$ from any earlier time $\tau' < \tau$.
For every $i \in \Z^d$, let
\begin{equation*}
    \widehat\tau_i:=\sup\{\tau <0 \mid \forall \xi, \xi' \in \Omega, X^{\xi, \tau}_0(i)=X^{\xi', \tau}_0(i)\}.
\end{equation*}
If $\widehat\tau_i$ is finite for all $i\in \Z^d$, one can define a random field $X^*=(X^*_i)_{i \in \Z^d}$ by
$$X_i^*=X_0^{\cdot, \widehat\tau_i}(i), \ i \in \Z^d,$$
noticing that one can take any initial configuration for the dynamics, since the value of $X_i^*$ is the same for any choice.
Note that $X^*$ is a measurable function of $\PP$, so it is an FIID. Then, the fact that $X^*$ is distributed according to $\mu$ follows from a classical coupling-from-the-past argument.
\begin{prop}\label{prop:CFTP}
    Assuming that a.s. $\widehat\tau_i$ is finite for all $i\in \Z^d$, the random field $X^*$ defined above has distribution $\mu$. Moreover, it gives a realisation of $\mu$ as an FIID.
\end{prop}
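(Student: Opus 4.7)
My plan is to prove the two assertions separately: that $X^*$ is an a.s.-defined, measurable, translation-equivariant function of the PPP $\PP$, and that its law is $\mu$. The PPP $\PP$ can be encoded as an i.i.d. family $(\PP_i)_{i \in \Z^d}$ of independent Poisson processes on $\R \times [0,1]$, one per site, so constructing a translation-equivariant measurable map from $\PP$ to $X^*$ will yield the desired realisation as an FIID.

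For the FIID statement, I would fix any reference configuration $\xi_0 \in \Omega$ and set
$$F(\PP)_i := \lim_{\tau \to -\infty} X_0^{\xi_0, \tau}(i),$$
a value which by definition of $\widehat\tau_i$ is attained and independent of $\tau$ as soon as $\tau < \widehat\tau_i$, and which is moreover independent of $\xi_0$; on the a.s.-negligible event $\{\widehat\tau_i = -\infty\}$, $F(\PP)_i$ is set arbitrarily. Measurability follows from the fact that each $X_0^{\xi_0, \tau}(i)$ is a measurable function of the update marks of $\PP$ lying in $\Z^d \times (\tau, 0) \times [0,1]$. Translation-equivariance follows because shifting the PPP by $j \in \Z^d$ maps each coalescence time $\widehat\tau_i$ to $\widehat\tau_{i+j}$, since running the coupled dynamics at site $i$ under the shifted PPP coincides with running the original dynamics at site $i+j$ under $\PP$.

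To identify the law of $X^*$, I would run the standard coupling-from-the-past argument. Let $Y \sim \mu$ be independent of $\PP$, and for each $\tau < 0$ consider the coupled process $X^{Y, \tau}$. The update function $\varphi$ is constructed so that $\mu$ is invariant under the Glauber dynamics, so $X^{Y, \tau}_0 \sim \mu$ for every $\tau < 0$. Fix a finite $\Lambda \Subset \Z^d$ and a Borel set $E \subseteq S^\Lambda$, and let $A_\tau := \bigcap_{i \in \Lambda} \{\tau < \widehat\tau_i\}$; on $A_\tau$ the coalescence property forces $X^{Y, \tau}_0|_\Lambda = X^*|_\Lambda$, so
$$\mu|_\Lambda(E) = \P(X^{Y, \tau}_0|_\Lambda \in E) = \P(X^*|_\Lambda \in E,\, A_\tau) + \P(X^{Y, \tau}_0|_\Lambda \in E,\, A_\tau^c).$$
As $\tau \to -\infty$, the events $A_\tau$ increase to an event of full probability by the assumption that each $\widehat\tau_i$ is a.s. finite, so the first term tends to $\P(X^*|_\Lambda \in E)$ by monotone convergence and the second to zero. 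Hence $X^*|_\Lambda \sim \mu|_\Lambda$ for every finite $\Lambda$, which identifies the law of $X^*$ on $\Omega$ as $\mu$.

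This poses no real technical obstacle: all of the hard work is encoded in the construction of the graphical representation and in the hypothesis that the $\widehat\tau_i$ are a.s. finite. Once those are in hand, the conclusion falls out of the interplay between coalescence and $\mu$-invariance of the dynamics.
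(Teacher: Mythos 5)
Your proposal is correct and uses the same core argument as the paper: invariance of $\mu$ under the Glauber dynamics plus coalescence below $\min_{i\in\Lambda}\widehat\tau_i$. You are slightly more careful than the paper's (somewhat informal) passage to the limit, decomposing over the events $A_\tau$ and invoking monotone convergence rather than simply plugging in a random time $\tau \leq \min_{i\in\Lambda}\widehat\tau_i$, and you spell out the measurability and translation-equivariance that the paper leaves implicit, but these are refinements of the same proof rather than a different route.
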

\begin{proof}
    Fix $\Lambda \Subset \Z^d$ and let $\xi$ be distributed according to $\mu$, independent of $\PP$.
    Since $\mu$ is invariant for the dynamics, it follows that for all $\tau<0$, $X^{\xi, \tau}_0$ has law $\mu$.
    On the other hand, for all $\tau \leq \min_{i \in \Lambda} \widehat\tau_i$, one has a.s. $X^{\xi, \tau}_0(i)=X_i^*$, by definition of the $\widehat\tau_i$. Note that $\min_{i \in \Lambda} \widehat\tau_i > -\infty$ a.s. because $\Lambda$ is finite. Therefore, $X^*|_\Lambda$ is distributed according to $\mu|_\Lambda$, and since $\Lambda$ was arbitrary, the result follows.
\end{proof}

\subsection{Maximal coupling probability}\label{Maximal coupling probability}
If we prove that the $\widehat\tau_i$ are finite almost surely, according to Proposition \ref{prop:CFTP}, it yields a coding of $\mu$ from the update marks.
To achieve that, we aim to have an update function $\varphi$ which favours coalescence.
It means that we want to maximise $$\mathbb{P}\big(\forall\eta, \eta' \in S^B, \varphi(\eta,U)=\varphi(\eta', U)\big)$$ for $U \sim \mathcal{U}[0,1]$, keeping the property that $\varphi(\eta,U)$ has law $\mu(X_0 \in \cdot \mid (X_j)_{j \in B}=\eta)$.

It is a general fact (see \cite{thorisson} Chapter 3, Theorem 7.1) that for any collection of probability measures $\pi^\eta$, defined on the same measurable space, for $\eta$ living in an arbitrary index set (e.g. $S^B$), we can define its greatest common component $\bigwedge_\eta \pi^\eta$ by
$$\text{ For any event }A, \Big(\bigwedge_\eta \pi^\eta \Big)(A)= \sup\{ \nu(A), \nu \leq \pi^\eta \ \forall \eta \}.$$
In general, $\bigwedge_\eta \pi^\eta$ is not a probability measure.
In fact, its total mass is the \emph{maximal coupling probability} (\cite{thorisson} Chapter 3, Theorem 7.2) of laws $\pi^\eta$, that is the maximal probability for realisations of $\pi^\eta$ to coincide in a coupling. Theorem 7.3 in \cite{thorisson} (Chapter 3) guarantees that there always exists a coupling that achieves this maximal coupling probability.

Then, returning to our setting, let
$$\gamma(\mu):= \max_\mathbb{P} \mathbb{P}(\forall \eta, \eta' \in S^B, X_i^\eta = X_i^{\eta'}),$$
where the maximum is taken over all the couplings of random variables $(X_i^\eta)_{\eta \in S^B}$ such that $X_i^\eta$ has law $\mu(X_i \in \cdot \, \mid (X_j)_{j \in i+B})(\eta)$. By the Markov property, this distribution is the same as $\mu(X_i \in \cdot \mid (X_j)_{j \neq i})(\xi)$ for any $\xi \in \Omega$ that coincides with $\eta$ on $i+B$.
Note that by stationarity, $\gamma(\mu)$ does not depend on $i$.
It follows that $\gamma(\mu)$ corresponds to the maximal coupling probability of laws $\pi^\eta:=\mu(X_0 \in \cdot \mid (X_j)_{j \in B}=\eta)$, for $\eta \in S^B$.
Then, it is clear that if $S$ is finite, $\gamma(\mu)$ corresponds to the multigamma admissibility of \cite{HaggstromSteif} (i.e. the left-hand side of \eqref{eq:high_noise}).

The best we can hope for the update function is that it returns the same value regardless of the first coordinate (which corresponds to the boundary condition) when the second coordinate belongs to a subset of $[0,1]$ of length $\gamma(\mu)$.
An explicit construction of an admissible update function is done in \cite{HaggstromSteif} for finite $S$, and see \cite{ferrari} or \cite{MurdochGreen} for the case of real-valued random variables, or the Appendix below.

We conclude this Section by computing the maximal coupling probability for our two Gaussian models.
For every $x \in \R$, set $g_x(t) := \frac{1}{\sqrt{2\pi}} e^{-(t-x)^2/2}$, the density of $\Norm(x,1)$.
The law $\Norm(x,1)$ conditioned to $[-L,L]$ (for $L>0$) is called the \emph{truncated to $[-L,L]$ normal distribution} of mean $x$ and variance 1; its density is given by
$$\overline{g_x}^L(t) := \dfrac{g_x(t)}{\int_{-L}^L g_x(s) \d s} \ind{t \in [-L,L]}.$$
We denote this distribution by $\overline{\Norm}^L(x, 1)$.

Let us start by computing $\gamma(\mu^L_\varepsilon)$, for $L>0$ and $|\varepsilon|<1/(2L^2)$. It is the maximal coupling probability of laws $\mu^L_\varepsilon(X_0 \in \cdot \mid (X_j)_{j \in B} =\eta)$ for $\eta \in [-L,L]^B$ (where $B=B^0_1\setminus\{0\}$). By definition of the model, $\mu^L_\varepsilon(X_0 \in \cdot \mid (X_j)_{j \in B} =\eta)$ coincides with $\overline{\Norm}^L(\varepsilon \sum_{j \in B} \eta_j, 1)$, so that $\gamma(\mu^L_\varepsilon)$ is the same as the maximal coupling probability of law $\overline{\Norm}^L( \varepsilon x, 1)$, for $x \in [-L,L]$.
According to \cite{thorisson} (Chapter 3, Corollary 8.1), the maximal coupling probability of a sequence of distributions admitting a density is given by the integral of the infimum of the densities.
Although $[-L,L]$ is uncountable, the result still holds for the collection $\overline{\Norm}^L( \varepsilon x, 1)$, $x \in [-L,L]$, and we have:
$$\gamma(\mu_\varepsilon^L) =  \int_{-L}^L \inf_{|x| \leq L } \overline{g_{\varepsilon x}}^L(t) \d t.$$
The inequality ``$\leq$'' follows from the aforementioned result, applied to the countable collection of law $\overline{\Norm}^L( \varepsilon x, 1)$, for $x \in \Q \cap[-L,L]$; the fact that it is an equality can be seen by density of $\Q$ and continuity of $\overline{g_{\varepsilon x}}^L$ with respect to $x$, or because we have an explicit construction of a coupling which matches this coupling probability (see \cite{ferrari}, \cite{MurdochGreen} or the Appendix).
It can be observed (see Lemma \ref{lemma_eta_L} below) that $\gamma(\mu_\varepsilon^L)$ goes to 1 when $\varepsilon$ goes to 0 ($L$ being fixed), so that $\mu_\varepsilon^L$ satisfies the high noise assumption \eqref{eq:high_noise} when $|\varepsilon|$ is small enough.

We now compute $\gamma(\mu_\varepsilon)$ for $|\varepsilon|\leq1$. It corresponds to the maximal coupling probability of $\Norm(\varepsilon x,1)$, for $x \in \R$. Hence, similarly as above, we have
$$\gamma(\mu_\varepsilon) =  \int_{-\infty}^{+ \infty} \inf_{x \in \R } g_{\varepsilon x}(t) \d t.$$
It can easily be observed that this quantity equals 0 for all $\varepsilon \neq0$.
This means that there is no chance to reach coalescence within a single update of the dynamics.

\section{Finitary coding with the high noise assumption}\label{Proof of thm mu_L}
This section is devoted to the proof of the following result, from which we will deduce Theorem \ref{thm_mu_L}.

\begin{thm}\label{thm_compact}
    If $\gamma(\mu) > 1-\frac{1}{|B|}$, then $\mu$ is an FFIID with exponential tails.
\end{thm}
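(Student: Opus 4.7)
\medskip
\noindent\textbf{Proof plan.}
The strategy is to run the coupling-from-the-past procedure of Section~\ref{Coupling-from-the-past}, combined with a backward exploration of the graphical representation that is dominated by a subcritical Galton--Watson process.

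First, invoking Theorem 7.3 of \cite{thorisson} (Chapter~3), I choose the update function $\varphi : S^B \times [0,1] \to S$ so that the maximal coupling probability is attained by a Lebesgue set of uniforms. That is, there exists a Borel set $G \subset [0,1]$ with $|G| = \gamma$ such that $\varphi(\cdot, u)$ is constant in its first argument for every $u \in G$, while for each fixed $\eta \in S^B$, $\varphi(\eta, U) \sim \mu(X_0 \in \cdot \mid (X_j)_{j\in B} = \eta)$ when $U \sim \mathcal{U}[0,1]$. An explicit such $\varphi$ is recalled in the Appendix.

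Next, I define a backward exploration of the PPP, starting from the single active space--time point $(0,0)$. Inductively, pick an active point $(i, t)$, let $(i, T, U)$ be the most recent update mark of site $i$ strictly before time $t$, and remove $(i, t)$ from the active set. If $U \in G$, the value of the dynamics at $(i, T)$ is forced independently of the configuration just before, so this branch is \emph{closed}; otherwise, add the $|B|$ points $\{(i + b, T) : b \in B\}$ to the active set. This procedure reveals a (random) set of Poisson update marks whose values suffice to determine $X_0^{\xi, \tau}(0)$ for every $\xi$ and every $\tau$ earlier than the smallest revealed time; in particular, if the exploration terminates then $\widehat\tau_0 > -\infty$ and the value $X_0^*$ of Proposition~\ref{prop:CFTP} is measurable with respect to the revealed marks only.

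The key step is to upper bound this exploration by a branching process. Denote by $X_n$ the number of active points produced at generation $n$ (with $X_0 = 1$). Each newly revealed update mark carries an independent uniform on $[0,1]$ (update marks at different sites come from independent PPPs, and marks at the same site at different times are independent), so conditionally on $X_n$ the number of marks with $U \notin G$ is stochastically dominated by $\mathrm{Bin}(X_n, 1-\gamma)$, and thus
\begin{equation*}
\mathbb{E}[X_{n+1} \mid X_n] \leq |B|(1-\gamma)\, X_n.
\end{equation*}
Writing $m := |B|(1-\gamma) < 1$ by hypothesis, iteration gives $\mathbb{E}[X_n] \leq m^n$, hence $\mathbb{P}(X_n > 0) \leq m^n$ by Markov. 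Therefore the exploration terminates almost surely, with the extinction generation $N := \sup\{n : X_n > 0\}$ satisfying $\mathbb{P}(N \geq n) \leq m^n$.

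To conclude, note that every update mark revealed by the exploration lies at a site whose $\ell^1$ distance from the origin is at most $N \cdot r_B$, where $r_B := \max_{b \in B} \|b\|_1$. Thus $\widehat\tau_0 > -\infty$ almost surely, Proposition~\ref{prop:CFTP} yields the coding $F$ with $F(\PP)_0 = X_0^*$, and its coding radius satisfies $R_0 \leq r_B \cdot N$, which has exponential tails since $N$ does. Stationarity transfers this to every site and gives the same tail bound for the coding radius $R$, so $\mu$ is FFIID with exponential tails.

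\smallskip
\noindent\textbf{Main obstacle.} The only delicate point is the stochastic domination by a Galton--Watson process: the same update mark could in principle be visited by two different branches of the exploration (if two active points at neighbouring sites end up with comparable times). However, this can only make the exploration \emph{smaller} than its branching counterpart, since a repeat mark reuses an already-revealed uniform rather than sampling a fresh one, so the size of the true exploration is dominated by the freshly-sampled branching process and the bound $\mathbb{E}[X_{n+1}\mid X_n] \leq m X_n$ remains valid.
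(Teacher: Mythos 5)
Your proof is correct and takes a genuinely more direct route than the paper. The paper introduces an auxiliary attractive spin system $(\omega^\tau_t)_{t\geq\tau}$ on $\{0,1\}^{\Z^d}$ which tracks ``coalescence certified from the update marks,'' establishes one inclusion between $\{\omega^\tau_0(0)=0\}$ and $\{\hat\sigma_\tau=\emptyset\}$ by hand (Lemma~\ref{couplage_disjoint}), proves a duality relation between the spin system and the backward exploration $\hat\sigma$ (Lemma~\ref{prop_duality}, Proposition~\ref{prop:duality_spin_system_backward_exploration}) to get the reverse inclusion, and only then kills the exploration with a union bound over open active paths. You short-circuit all of this: you observe directly that if the backward exploration terminates then the revealed uniforms determine $X_0^{\xi,\tau}(0)$ by an inductive unwinding of updates, independently of $\xi$ once $\tau$ is below all revealed times, so $\widehat\tau_0>-\infty$ and Proposition~\ref{prop:CFTP} applies. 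This is both shorter and conceptually cleaner for this particular theorem; the reason the paper deploys the heavier spin-system/duality machinery here is explicitly to serve as a template for Theorem~\ref{thm_general}, where the maximal coupling probability vanishes, the exploration never fully closes, and a stratified version of the same duality becomes essential. The one place where your argument is looser than the paper's is the extinction estimate: you dominate the exploration by a Galton--Watson process and devote a remark to the issue of repeated marks, but your treatment there only addresses within-generation repeats (two active points converging to the same mark at the same generation) and not cross-generation repeats (an active point at generation $n$ landing on a mark already revealed at generation $n'<n$, whose uniform is therefore already conditioned). The paper sidesteps this entirely with a union bound: any active path of length $n$ from $(0,0)$ visits $n$ distinct marks whose uniforms are genuinely independent, there are at most $|B|^{n-1}$ such paths since the first mark $\rho(0,0)$ is forced, and so the probability of an all-open path of length $n$ is at most $|B|^{n-1}(1-\gamma)^n$, which already gives both almost-sure extinction and the exponential tail on the coding radius without any domination argument. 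You would strengthen your write-up by replacing the GW domination with that union bound, or by organizing the exploration by graph distance $d(\rho_0,\cdot)$ rather than by BFS generation so that each level consists of genuinely fresh, independent marks.
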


We consider the dynamics introduced in Section \ref{Glauber dynamics for Gaussian model} that admits $\mu$ as invariant law, and the coupling of processes $(X^{\xi,\tau})_{\xi \in \Omega, \tau \in \R}$ defined with the graphical representation.
Recall that $\gamma=\gamma(\mu)$ is the maximal coupling probability of $\mu(X_0 \in \cdot \mid (X_j)_{j\in B}=\eta)$ for $\eta \in S^B$, so we suppose that the update function $\varphi$ verifies
\begin{equation}\label{maximal_coupling_phi_L}
    \forall \eta,\eta' \in S^B, \forall u \leq \gamma, \varphi(\eta,u) = \varphi(\eta',u)
\end{equation}
It means that for each update mark $(i,t,u) \in \PP$ with $u \leq \gamma$, the new value of the field at $i$ after the update is determined by $u$, independently of the values of the neighbours.
Hence, one has $X^{\xi, \tau}_t(i)=X^{\xi', \tau}_t(i)$ for any $\tau <t$ and $\xi,\xi' \in \Omega$, so the value of the field at $i$ at time $t$ becomes independent of the initial configuration: we have coalescence from $\tau$ at $(i,t)$.

\subsection{Spin system associated with the dynamics}\label{Spin system associated with the dynamics}
We associate with the dynamics a \emph{spin system}, that is, following the terminology of \cite{Liggett1985InteractingParticleSystems} (Chapter III), a continuous-time Markov process on $\{0,1\}^{\Z^d}$, for which only one coordinate changes in each transition.
It is constructed with the graphical representation, using the same update marks $\PP$ as before.
For any time $\tau \in \R$, let $(\omega^\tau_t)_{t\geq \tau}$ be the spin system started at $\tau$ with initial configuration $\omega^\tau_\tau \equiv 1$ (i.e. put spin 1 everywhere), and whose evolution is governed as follows:
for all $(i,t,u) \in \PP$, with $t> \tau$, the spin at $i$ is set to $1$ if $u>\gamma$ and $\omega^\tau_{t-}(j)=1$ for some $j \sim i$, and it is set to 0 otherwise; the other spins remain unchanged, i.e. we have
\begin{equation}\label{eq:transition_omega}
    \forall j\neq i, \omega^\tau_t(j)=\omega^\tau_{t-}(j) \et \omega_t^\tau(i)= \ind{u>\gamma}\ind{\exists j\sim i, \omega^\tau_t(j)=1}.
\end{equation}
Note that a transition occurs at time $t$ only if there is an update mark at that time, but not all the update marks generate an actual transition. Indeed, the value of a spin after an update can be the same as before.

For future purpose, we now give the rate function and the generator of the system.
For $\omega \in \{0,1\}^{\Z^d}$ and $i \in \Z^d$, call $\omega^i$ the configuration obtained from $\omega$ by flipping coordinate $i$, i.e. $$\omega^i(j) = \cas{\omega(j)}{\text{for } j \neq i}{1-\omega(j)}{\text{for }j=i}.$$
The rate function is defined for all $i \in \Z^d$ and $\omega \in \{0,1\}^{\Z^d}$ by
\begin{equation}\label{rate_fct}
    c(i, \omega) = \cas{\gamma + (1-\gamma) \ind{\forall j \sim i, \omega(j) = 0}}{\text{if } \omega(i) = 1}{(1-\gamma)\ind{\exists j\sim i, \omega(j) = 1}}{\text{if } \omega(i) = 0};
\end{equation}
$c(i,\omega)$ is the rate at which $\omega$ jumps to $\omega^i$. Roughly, it means that during an infinitesimal interval of time of length $\d t$, independently for every site $i$ of $\Z^d$, the spin at $i$ is flipped with probability $c(i,\omega) \d t$.
Then, the generator $\L$ of the system is defined by
\begin{equation}\label{generator_L}
    \L f(\omega) = \sum_{i \in \Z^d} c(i,\omega) (f(\omega^i) - f(\omega))
\end{equation}
for every \emph{local} function $f:\{0,1\}^{\Z^d} \to \R$, that is $f$ depends on finitely many coordinates.

The fact that $\L$ is indeed the generator of the systems $(\omega^\tau_t)_{t \geq \tau}$ constructed above with the graphical representation can be seen as follows.
The spin at site $i$ can be flipped at time $t$ only if there exists some update mark $(i,t,u) \in \PP$.
A flip from 1 to 0 is produced by the update mark if $\ind{u>\gamma}\ind{\exists j\sim i, \omega^\tau_t(j)=1}=0$, that is if $u \leq \gamma$ or if $u>\gamma$ and all the neighbours of $i$ have spin 0 at the time of the update.
Conversely, a flip from 0 to 1 is produced if $\ind{u>\gamma}\ind{\exists j\sim i, \omega^\tau_t(j)=1}=1$, that is if $u> \gamma$ and at least one neighbour of $i$ has spin 0 at the time of the update.
This is consistent with the rate function defined in \eqref{rate_fct}.

Observe that $c(\cdot,\cdot)$ is the rate function of an \emph{attractive} spin system, in the sense that
when $\omega_1 \leq \omega_2$\footnote{For the canonical partial order on $\{0,1\}^{\Z^d}$, i.e. $\omega_1 \leq \omega_2$ if and only if $\omega_1(j) \leq \omega_2(j)$ for every $j\in \Z^d$.}, for every $i\in \Z^d$,
$$c(i,\omega_1) \leq c(i, \omega_2) \text{ if } \omega_1(i)=\omega_2(i) =0,$$
$$c(i,\omega_1) \geq c(i, \omega_2) \text{ if } \omega_1(i)=\omega_2(i) =1.$$
This implies that for any $\tau \geq \tau'$, one has
$$\P(\forall t\geq \tau, \omega^{\tau'}_t \leq \omega^\tau_t)=1.$$
This is because our systems are initialised with the maximal initial condition, so that $\omega^{\tau'}_\tau \leq \omega^\tau_\tau$ almost surely, and this inequality propagates by attractiveness.

The interpretation of this spin system is that it encapsulates whether or not the value of the field in the dynamics can be computed in a measurable way only from the update marks, and not from the initial configuration; a spin 0 in a certain space-time position will mean that one almost surely has coalescence from $\tau$ at this space-time position.
More precisely, 

\begin{lemme}\label{lemme:spin_0_implies_coalescence}
For every $t\geq \tau$ and $i \in \Z^d$, we have
\begin{equation*}
    \{\omega^{\tau}_t(i) =0\} \as{\subset} \big\{\forall \xi, \xi' \in \Omega, X_t^{\xi, \tau} (i) = X_t^{\xi', \tau} (i)\big\}.
\end{equation*}
\end{lemme}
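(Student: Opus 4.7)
The plan is to establish the claim by induction on update marks, after reducing to a finite-volume setting via the Harris percolation construction recalled in Section \ref{Glauber dynamics for Gaussian model}. For $t - \tau \leq t_0$ (the threshold provided by Harris's argument), the site $i$ lies almost surely in a finite connected component $C$ of the ``open-edge'' random graph, and both $X^{\xi,\tau}_t(i)$ and $\omega^\tau_t(i)$ are measurable functions of the (finitely many) update marks falling in $C \times (\tau, t]$. Iterating the argument over successive time intervals of length $t_0$ extends the conclusion to arbitrary $t - \tau$.

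In this finite setting, we enumerate the update marks in chronological order and prove, by induction on their rank in this enumeration, the following statement: for every space-time point $(j, s)$ with $\tau \leq s \leq t$ satisfying $\omega^\tau_s(j) = 0$, the value $X^{\xi,\tau}_s(j)$ is independent of $\xi$. The base case $s = \tau$ is vacuous since $\omega^\tau_\tau \equiv 1$. For the inductive step, assume $\omega^\tau_s(j) = 0$. Because $\omega$ at site $j$ can flip only at update marks of $j$, there must exist a largest time $s' \in (\tau, s]$ carrying such a mark $(j, s', u')$ with $\omega^\tau_{s'}(j) = 0$ (otherwise $\omega^\tau_s(j)$ would still equal its initial value $1$); since no update at $j$ occurs in $(s', s]$, we have $X^{\xi,\tau}_s(j) = X^{\xi,\tau}_{s'}(j) = \varphi\bigl(X^{\xi,\tau}_{s'-}|_{j+B}, u'\bigr)$.

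By the transition rule \eqref{eq:transition_omega}, $\omega^\tau_{s'}(j) = 0$ forces either $u' \leq \gamma$ or $\omega^\tau_{s'-}(k) = 0$ for every neighbor $k \sim j$. In the first case, assumption \eqref{maximal_coupling_phi_L} makes $\varphi(\cdot, u')$ constant in its first argument, so $X^{\xi,\tau}_{s'}(j)$ is independent of $\xi$. In the second case, the induction hypothesis applied at each of the strictly earlier space-time points $(k, s'-)$ gives that $X^{\xi,\tau}_{s'-}(k)$ is independent of $\xi$ for all $k \sim j$, and therefore so is $\varphi(X^{\xi,\tau}_{s'-}|_{j+B}, u')$. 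The only delicate ingredient is the finite-range reduction via Harris; once that is in place the induction itself is immediate from the two cases above. The non-monotonicity of the spin system — spins may flip back from $0$ to $1$ after some intermediate time — causes no difficulty, because we only trace back to the most recent update of $j$ rather than relying on any monotone coupling in time.
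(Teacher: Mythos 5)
Your proof is correct and follows the same inductive argument as the paper: both proceed by induction over the update marks, split on the two cases $u' \leq \gamma$ (where \eqref{maximal_coupling_phi_L} makes $\varphi(\cdot,u')$ constant in its first argument) and $\omega$ equal to $0$ on the entire neighbourhood (where the induction hypothesis at the neighbours applies), and conclude. The only substantive difference is that you explicitly invoke Harris's percolation argument to justify that the induction on update marks is well-founded, whereas the paper leaves this implicit, relying on the finite-component construction already established in Section~2.
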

\begin{proof}
This is clearly true for all $i\in \Z^d$ at $t=\tau$ since the event on the left-hand side is empty, and one can check that the inclusion is preserved by the updates for almost all realisation of the PPP.
Suppose that $(i,t,u) \in \PP$ for some $t> \tau$ and that the inclusion holds for all $s\in[\tau,t)$ and all $j\in \Z^d$. By property of the PPP, a.s. there is no other update mark at time $t$.
We have $\omega^\tau_t(i)=0$ if and only if $u \leq \gamma$ or for all $j \sim i$, $\omega^\tau_t(j)=0$.
If the update mark satisfies $u \leq \gamma$, it follows from \eqref{maximal_coupling_phi_L} that we have coalescence at $(i,t)$. In the second case, the fact that $\omega^\tau_t(j)=0$ for all $j\sim i$ implies, by assumption, that the values of the field on the neighbourhood of $i$ do not depend on the initial configuration of the dynamics; since the output of the update function $\varphi$ only depends on the values of the field on this neighbourhood, it means that this output itself does not depend on the initial configuration; therefore, we get that $X_t^{\xi, \tau} (i) = X_t^{\xi', \tau} (i)$ for all $\xi, \xi' \in \Omega.$
\end{proof}
For every $i \in \Z^d$, let
\begin{equation*}
    \tau_i=\sup\{\tau <0 \mid \omega^\tau_0(i)=0\}.
\end{equation*}
Observe that $\tau_i$ is a random time, measurable with respect to $\PP$.
By attractiveness, it follows that if $\tau \leq \tau_i$, one has a.s. $\omega_0^\tau(i)=0$.
Moreover, Lemma \ref{lemme:spin_0_implies_coalescence} implies that almost surely $\tau_i \leq \widehat\tau_i$ for all $i \in \Z^d$. Therefore, it is sufficient to show that $\tau_i>-\infty$ almost surely to be able to define $X^*$ as in Section \ref{Coupling-from-the-past}, and obtain a coding for $\mu$, according to Proposition \ref{prop:CFTP}.

Our main goal in the rest of this Section is to prove almost sure finiteness of the $\tau_i$.
Since they are identically distributed, it is enough to prove it for $\tau_0$ (0 being the origin of $\Z^d$).
The proof strategy is to dominate $\tau_0$ by the extinction time of a branching process. For that purpose, we formally define the backward exploration mentioned in the Introduction.

\subsection{Backward exploration of the dynamics}\label{Backward exploration of the dynamics}
We introduce the notion of active paths, borrowed from \cite{Liggett1985InteractingParticleSystems}, which represents the paths that one follows while exploring the dynamics backward. In the following, for an update mark $\rho \in \PP$, write $U_\rho$ for the third coordinate of $\rho$. Abusing the notation, we will sometimes not distinguish an update mark from its space-time position (i.e. its first two coordinates), as almost surely there is at most one mark at any time $t$ (in this case, we will often drop the word ``update'' and call a mark the space-time position of an update mark).
For every $(i,t) \in \Z^d \times \R$, if there is an update mark at $(i,t)$, put an oriented \emph{arrow} from $i$ to $j$ at time $t$ for all $j \in i+B$.
For $i,j \in \Z^d$, $t > s$, say that there is an \emph{active path} of length $n$ from $(i,t)$ to $(j,s)$ if there is a sequence of times $t=t_0 > t_1 > \dots > t_n \geq t_{n+1}=s$ and a sequence of sites $i=x_0, x_1, \dots, x_n=j$ such that:
\begin{enumerate}[label = (\roman*)]
    \item for every $k\in \{1, \dots n\}$, there is an arrow between $x_{k-1}$ and $x_k$ at time $t_k$
    \item for every $k\in \{0, \dots n\}$, there is no mark in the vertical segment $\{x_i\} \times (t_i, t_{i+1})$.
\end{enumerate}
In other words, active paths go backward in time, and each time they find a mark, they necessarily choose an arrow in the direction of a neighbour of their current location.
The length corresponds to the number of crossed arrows, equivalently the number of marks discovered.
Note that the active paths do not depend on the third coordinate of the update marks, but only on their space-time position.

For every $(i,t) \in \Z^d \times \R$, let $\rho(i,t)$ be the last mark at site $i$ before time $t$:
\begin{equation}\label{rho}
    \rho(i,t) = (i, \sup\{s \leq t : \exists u \in [0,1],  (i,s,u) \in \PP \}),
\end{equation}
the supremum being a.s. finite by property of the PPP.
From the definition of an active path, we derive a notion of distance\footnote{Note that it is not symmetric; it should be understood as a directed graph distance.} on $\PP$.
For two marks $\rho$ and $\rho'$ in $\PP$, there may be several active paths from $\rho$ to $\rho'$. Define $d(\rho, \rho')$ as the minimal length of such a path.
If there is none, set $d(\rho, \rho') := + \infty$.

For each $\rho\in \PP$, we define the \emph{active marks} from $\rho$ to be the ones to which there is an active path from $\rho$. The set $\A_\rho$ of active marks from $\rho$ is therefore given by
\begin{equation}\label{A_rho}
    \A_\rho = \{\rho' \in \PP ~\vert~ d(\rho, \rho') < +\infty\}.
\end{equation}

In a realisation of the dynamics, the value of the field at $(i,t)$ is therefore a measurable function of the update marks of $\A_{\rho(i,t)}$ and of the initial configuration.
Coalescence is achieved when this dependence on the initial configuration disappears.

We consider an oriented Bernoulli site percolation on the marks.
For every $\rho \in \PP$, say that $\rho$ is \emph{open} if $U_\rho > \gamma$.
An \emph{open active path} is an active path that crosses only open marks.
It is equivalent to consider that we keep the arrows originating from $\rho$ only if $U_\rho>\gamma$.
The orientation of the active paths gives an orientation to the percolation model.

For all $t \leq 0$, let $\hat \sigma_t$ be the set of sites $j \in \Z^d$ such that there exists an open active path from $(0,0)$ to $(j,t)$, that is
$$\hat \sigma_t = \{j \in \Z^d: (0,0) \xrightarrow[]{o.a.} (j,t) \},$$
where o.a. stands for open active.
Note that $(\hat \sigma_t)_{t\leq 0}$ is a measurable function of $\PP$ and that, by definition of the active paths, it is a right-continuous Markov process.

The following lemma makes the connection with the spin systems defined in the previous paragraph.
In the statement and in the proof, each element of $\{0,1\}^{\Z^d}$ is identified with the subset of $\Z^d$ where it takes value $1$.

\begin{lemme}\label{couplage_disjoint}
    For all $\tau<0$, one has $$\{\omega^\tau_0(0)=0\} \as{\subset} \{\forall t \in [\tau, 0], \omega^\tau_t \cap \hat \sigma_t = \emptyset\}.$$
    In particular, $\{\omega^\tau_0(0)=0\} \as{\subset} \{\hat \sigma_\tau = \emptyset\}.$
\end{lemme}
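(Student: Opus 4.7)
The plan is to establish $\omega^\tau_t \cap \hat\sigma_t = \emptyset$ by backward induction on $t$, starting from $t=0$ and propagating the inclusion down to $t=\tau$. The base case is immediate: $\hat\sigma_0 = \{0\}$, since the only open active path ending at time $0$ is the length-$0$ path $(0,0) \to (0,0)$, while by attractiveness of $\omega^\tau$ combined with the definition of $\tau_0$, one has $\omega^\tau_0(0)=0$ whenever $\tau\leq\tau_0$, so the intersection at time $0$ is empty. Once the inclusion is proved throughout $[\tau,0]$, the conclusion $\hat\sigma_\tau=\emptyset$ follows at once, because $\omega^\tau_\tau\equiv 1$ occupies all of $\Z^d$.

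Between two successive marks of $\PP$, both processes are locally constant, so the heart of the induction is the analysis of a single backward transition across an update mark $(i,t_0,u)$. On a full-measure event, no other mark occurs at time $t_0$, and I would first describe the backward transition of $\hat\sigma$: the site $i$ is removed from $\hat\sigma$, and if additionally the mark is open ($u>\gamma$) and $i\in\hat\sigma_{t_0}$, then the neighbours $i+B$ are inserted. The spin configuration $\omega^\tau$ is unchanged at every site other than $i$ during the transition, by the heat-bath construction.

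The case analysis then splits naturally. When $i\notin\hat\sigma_{t_0}$, or when the mark is closed, $\hat\sigma_{t_0-}$ is contained in $\hat\sigma_{t_0}\setminus\{i\}$, so the inductive hypothesis transfers coordinate by coordinate. The only delicate case is $u>\gamma$ together with $i\in\hat\sigma_{t_0}$: the inductive hypothesis forces $\omega^\tau_{t_0}(i)=0$, and I would apply the transition rule~\eqref{eq:transition_omega} in contrapositive form. Since the open update at $i$ produced the value $0$, none of its neighbours could have carried a spin $1$ just before, i.e.\ $\omega^\tau_{t_0-}(j)=0$ for every $j\in i+B$; this is exactly the set of sites newly added to $\hat\sigma_{t_0-}$, and the induction closes.

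The main difficulty I anticipate is not the algebraic bookkeeping but the careful identification of the backward transition of $\hat\sigma$ across a mark, in particular the almost-sure removal of $i$. This relies on the fact that $\PP$ almost surely has no two marks at the same time, which rules out alternative open active paths entering $i$ from a neighbour in an arbitrarily small window below $t_0$. Once this is rigorously pinned down, the verification above closes the induction without further subtlety.
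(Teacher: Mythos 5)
Your proof is correct and follows essentially the same route as the paper: backward induction from $t=0$, with the observation that $\hat\sigma_{t-}(i)=0$ at any mark, the easy case where the mark is closed or $i\notin\hat\sigma_t$, and then the critical case (open mark, $i\in\hat\sigma_t$) handled by the transition rule~\eqref{eq:transition_omega} — you phrase it as a contrapositive where the paper argues by contradiction, but the content is identical. The only cosmetic additions are your explicit appeal to attractiveness for the base case (the paper leaves this implicit as ``by assumption on $\tau$'') and the remark on a.s.\ uniqueness of mark times, neither of which changes the argument.
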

\begin{proof}
    For $t=0$, we have a.s. $\hat \sigma_0 = \{0\}$, so $\{0\notin \omega_0^\tau\} \as{\subset}\{\omega^\tau_0 \cap \hat \sigma_0 = \emptyset\}$.
    Thus, we only need to check that the property $\omega^\tau_t \cap \hat \sigma_t = \emptyset$ is preserved a.s. by every update of the systems.
    Actually, since we already have the inclusion for the terminal time $0$ (and not the initial time $\tau$), we need to look at the updates backward.

    Suppose that $(i,t,u) \in \PP$ for some $t \in [\tau, 0]$ and that $\omega^\tau_t \cap \hat \sigma_t=\emptyset$.
    Show that $\omega^\tau_{t-} \cap \hat \sigma_{t-}=\emptyset$.
    First, since we have a mark at $(i,t)$, we always have $\hat \sigma_{t-}(i) =0$ (any potential active path from $(0,0)$ to $(i,t)$ will jump to a neighbour of $i$ at time $t$). Hence $i \notin \omega^\tau_{t-} \cap \hat \sigma_{t-}$.
    Then, transitions on the system $(\omega^\tau_t)_{t\geq \tau}$ can only flip the spin of the site that carries the update mark, so we have $\omega^\tau_{t-} \subset \omega^\tau_t\cup \{i\}$.
    The update mark $(i,t,u)$ extends an open active path if and only if 
    $i \in \hat \sigma_t$ and the mark is open (i.e. $u>\gamma$).
    If one of these two conditions fails, we have $\hat \sigma_{t-} = \hat \sigma_t - \{i\}$, so $\omega^\tau_{t-} \cap \hat \sigma_{t-} = \emptyset$.

    When the mark is open and $i \in \hat \sigma_t$, it yields an open active paths from $(0,0)$ to $(j,t-)$ for every neighbour $j$ of $i$.
    Hence, every $j \in i+B$ belongs to $\hat \sigma_{t-}$. 
    Therefore, we only need to check that we cannot simultaneously have $\omega^\tau_{t-}(j)=1$.
    By contradiction, if some $j$ verifies $\omega^\tau_{t-}(j)=1$, as one has $u>\gamma$, the transition rule \eqref{eq:transition_omega} implies that $\omega^\tau_t(i) =1$.
    This contradicts the fact that $\omega^\tau_t \cap \hat \sigma_t=\emptyset$, because $i \in \hat \sigma_t.$

    At time $t=\tau$, the property tells us that $\hat \sigma_\tau = \emptyset$, because $\omega^\tau_\tau = \Z^d$.
\end{proof}

\subsection{Duality between the spin system and the backward exploration}\label{Dual process}

Now, our next step is to prove the reverse inclusion $\{\omega^\tau_t(0)=0\}\as\supset\{\hat \sigma_\tau=\emptyset\},$ to obtain that $\tau_0$ coincides a.s. with the extinction time of the process $\hat \sigma_t$, that is the first (negative) time when it is empty. We will then be able to prove that this extinction time is almost surely finite, by comparison with a branching process.
The reverse inclusion is obtained by \emph{duality}, which is a powerful tool in the study of spins systems.
The following general definition, that can be found in \cite{Liggett1985InteractingParticleSystems}, holds for a larger class of interacting particle systems.
\begin{defi}\label{def:duality}
Suppose that $(\omega_t)_{t\geq \tau}$ and $(\sigma_t)_{t\geq \tau}$ are Markov processes on $\Omega$ and $\Sigma$ respectively, started at time $\tau \in \R$. Let $H$ be a bounded measurable function on $\Omega \times \Sigma$. 
The processes $(\omega_t)_{t\geq \tau}$ and $(\sigma_t)_{t\geq \tau}$ are said to be \emph{dual} with respect to $H$ if for all time $t\geq \tau$,
\begin{equation}\label{def_duality}
    \E_1^\omega H(\omega_t, \sigma)  = \E_2^\sigma H(\omega, \sigma_t) 
\end{equation}
for all $\omega \in \Omega$ and $\sigma \in \Sigma$, where $\E_1^\omega$ (resp. $\E_2^\sigma$) denotes the expectation with respect to the law of the process $(\omega_t)_{t\geq \tau}$ (resp. $(\sigma_t)_{t \geq \tau}$) started at time $\tau$ from configuration $\omega$ (resp. $\sigma$).
\end{defi} 

We will take as duality function the function $H(\omega, \sigma) = \prod_{i \in \Z^d} (1-\omega(i) \sigma(i))$.
It is frequently used in the study of spins systems, and is known as the coalescing duality.
By identifying $\omega$ and $\sigma$ with the set of sites where they take value one, one can rewrite $H$ as $H(\omega, \sigma) = \ind{\omega \cap \sigma = \emptyset}$.

We aim to define the dual of the spin system introduced in Section \ref{Spin system associated with the dynamics}. It will take values in the subset of $\{0,1\}^{\Z^d}$ of configurations containing finitely many 1, denoted $\Sigma$.
The Hille-Yosida theory on semigroups of operators and their generators enables us to prove duality merely by computation, based on the definitions of the spin systems by their generators.
We therefore first abstractly define the generator of a spin system on $\Sigma$, and provide some insight into why it is a good candidate for being the generator of the duals systems of $(\omega^\tau_t)_{t \geq \tau}$.
We then formally prove the duality relying on Theorem 3.42 in \cite{liggett2010continuous} (which is a consequence of the Hille-Yosida theorem), and show that $(\hat \sigma_{\tau-t})_{t \geq \tau}$ is a version of the dual of $(\omega^\tau_t)_{t\geq \tau}$.

For $\sigma \in \Sigma$ and $i \in \Z^d$, $\sigma^i$ still denotes the configuration obtained from $\sigma$ by flipping the spin at $i$, and $\sigma^{i,B}$ is the configuration obtained from $\sigma$ by replacing the spin of each neighbour of $i$ by 1 and the spin of $i$ by 0, i.e.
$$\sigma^{i,B} (j) = \begin{cases}
1& \si j \sim i\\
0&  \si j=i\\
\sigma(j) & \text{ otherwise}\end{cases}$$
Let $\M$ be the generator of a spin system on $\Sigma$ satisfying for every $f:\Sigma \to \R$
\begin{equation}\label{generator_M}
    \M f(\sigma) = \sum_{i \in \Z^d} \sigma(i) \left[(1-\gamma) f(\sigma^{i,B}) + \gamma f(\sigma^i) - f(\sigma)\right].
\end{equation}
The fact that elements of $\Sigma$ are 0 almost everywhere ensures that this expression is well-defined for every function $f$.

Intuitively, the duality can be understood with the graphical representation.
Consider a \emph{backward process} $(\sigma_t)_{t \leq 0}$ as, i.e. a process that goes back in time. Thus, it reads the update marks in the other way than the spin systems of Section \ref{Spin system associated with the dynamics}. It is initialised with configuration $\sigma_0=\{0\}$ (that is the configuration with spin 0 everywhere except at the origin of $\Z^d$).
From a configuration $\sigma$, within a single transition, it can only reach configurations of the form $\sigma^i$ or $\sigma^{i,B}$.
A transition can occur at time $t$ only if there exists an update mark $(i,t,u) \in \PP$.
Note that since we consider $(\sigma_t)_{t \leq 0}$ as a backward process, it will be left-continuous, contrary to the usual.
If $(i,t,u)\in \PP$ and $\sigma(i)$ denotes the spin at $i$ before the update, that is informally at time $t+$, the transition rules are the following:
\begin{itemize}
    \item If $\sigma(i) =0$, nothing happens.
    \item If $\sigma(i) =1$ and $u\leq \gamma$, jump to $\sigma^i.$
    \item If $\sigma(i) =1$ and $u> \gamma$, jump to $\sigma^{i,B}.$
\end{itemize}
It should be clear that $\M$ is the generator of $(\sigma_t)_{t \leq 0}$.
One may think about a contact process:
the transition is allowed only if $\sigma(i) = 1$ (equivalent of an infected site) and a jump to $\sigma^i$ corresponds to the recovery of the particle in $i$, whereas a jump to $\sigma^{i,B}$ corresponds to the infection of the neighbours (and simultaneously the recovery of the particle).

The interpretation of this process is that $\sigma_t$ represents what we need at time $t$ to obtain almost sure coalescence at $(0,0)$ from a certain negative time:
$\sigma_t(i) = 1$ means that we already need to have coalescence at $(i,t)$ almost surely.
This is why the process is initialised at time 0 with spin 0 everywhere except at the origin.
Given this interpretation, one can understand the transition rules as follows:
going back in time, suppose that we meet an update mark $(i,t,u)$ and that $\sigma_{t+}(i) = 1$.
If the update mark is bad, i.e. $u>\gamma$, the output of $\varphi$ in the dynamics depends on the values of the neighbours at time $t$; to get that this output is independent of the initial configuration, we therefore need that coalescence occurred for all these neighbours before $t$; this is why we allocate spin 1 in $\sigma_t$ to every neighbour of $i$.
If the update mark is good, i.e. $u>\gamma$, the values of the neighbours are unneeded to compute the output of $\varphi$, so we only set $\sigma_t(i) = 0$.
On the other hand, when the configuration before the update verifies $\sigma_{t+}(i) = 0$, it means that we do not need any information on the value of the field at $(i,t)$ in the dynamics, so we do not have to make a transition in the process $(\sigma_t)_{t\leq 0}$.

We can now forget about the specific process $(\sigma_t)_{t \leq 0}$, which has been introduced only to provide an intuitive understanding of duality. We now prove that $\M$ generates spin systems that are dual to the $(\omega^\tau_t)_{t \geq \tau}, \ \tau \in \R$.

\begin{lemme}\label{prop_duality}
The spin systems generated by $\L$ and $\M$ are dual with respect to $H(\omega, \sigma) = \ind{\omega \cap \sigma = \emptyset}.$
\end{lemme}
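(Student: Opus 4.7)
The plan is to reduce the duality statement to a pointwise identity between the two generators via the Hille--Yosida criterion (Theorem 3.42 in \cite{liggett2010continuous}), which says that if $\L$ and $\M$ generate Feller semigroups on suitable domains and
\[
\L\bigl(H(\cdot,\sigma)\bigr)(\omega)=\M\bigl(H(\omega,\cdot)\bigr)(\sigma)
\]
for every $\omega\in\{0,1\}^{\Z^d}$ and $\sigma\in\Sigma$, then \eqref{def_duality} holds. The fact that $(\omega^\tau_t)$ is well-defined through the graphical representation with bounded rates and that $H(\cdot,\sigma)$ is a local function (its value only depends on the coordinates of $\omega$ on the finite set $\mathrm{supp}(\sigma)$) places $H(\cdot,\sigma)$ in the domain of $\L$; symmetrically, the sum defining $\M$ has only finitely many nonzero terms because of the prefactor $\sigma(i)$, so the identity is a finite computation.

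The first observation is that the sum defining $\L H(\cdot,\sigma)(\omega)$ also reduces to $i\in\mathrm{supp}(\sigma)$, because flipping $\omega(i)$ leaves $\omega\cap\sigma$ unchanged when $\sigma(i)=0$, so the difference $H(\omega^i,\sigma)-H(\omega,\sigma)$ vanishes. It then remains to check the identity case by case, according to the intersection $A:=\omega\cap\sigma$ (viewing $\omega,\sigma$ as subsets of $\Z^d$).

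The verification splits into three cases. If $|A|\geq 2$, every term $H(\omega,\sigma)$, $H(\omega^i,\sigma)$, $H(\omega,\sigma^i)$, $H(\omega,\sigma^{i,B})$ vanishes for any $i\in\mathrm{supp}(\sigma)$, so both sides are zero. If $A=\emptyset$, then $H(\omega,\sigma)=1$; for each $i\in\mathrm{supp}(\sigma)$ one has $\omega(i)=0$, so from \eqref{rate_fct} the left-hand side picks up the contribution $-(1-\gamma)\mathbf{1}_{\{\exists j\sim i,\,\omega(j)=1\}}$, while on the right-hand side $\omega\cap\sigma^{i,B}=\omega\cap(i+B)$ and $\omega\cap\sigma^i=\emptyset$, so the contribution is $(1-\gamma)\mathbf{1}_{\{\forall j\sim i,\,\omega(j)=0\}}+\gamma-1$, and the two coincide. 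Finally, if $A=\{k\}$, then the only nonzero term on either side corresponds to $i=k$; on the left, using $\omega(k)=1$ and $c(k,\omega)=\gamma+(1-\gamma)\mathbf{1}_{\{\forall j\sim k,\,\omega(j)=0\}}$ together with $H(\omega^k,\sigma)-H(\omega,\sigma)=1$; on the right, $H(\omega,\sigma^k)=1$, $H(\omega,\sigma^{k,B})=\mathbf{1}_{\{\forall j\sim k,\,\omega(j)=0\}}$, $H(\omega,\sigma)=0$, and both expressions reduce to $\gamma+(1-\gamma)\mathbf{1}_{\{\forall j\sim k,\,\omega(j)=0\}}$.

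The only genuine obstacle is keeping the case analysis clean: the transitions $\sigma\mapsto\sigma^{i,B}$ affect several coordinates at once, so one must be careful when relating $\omega\cap\sigma^{i,B}$ to $\omega\cap(i+B)$ and to the rest of $A$. Once the three cases above are checked, Theorem 3.42 in \cite{liggett2010continuous} delivers \eqref{def_duality} with $H(\omega,\sigma)=\mathbf{1}_{\{\omega\cap\sigma=\emptyset\}}$, completing the proof.
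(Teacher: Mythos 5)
Your proof is correct and follows essentially the same route as the paper: invoke Theorem~3.42 of \cite{liggett2010continuous} to reduce duality to the pointwise generator identity $\L H(\cdot,\sigma)(\omega)=\M H(\omega,\cdot)(\sigma)$, then verify it by direct computation. The only cosmetic difference is organizational: the paper first isolates a per-site identity and cases on $\sigma(i)$, $\omega(i)$, and whether some $j\neq i$ has $\omega(j)\sigma(j)=1$, whereas you case on $|\omega\cap\sigma|\in\{0,1,\geq 2\}$ globally; the underlying cancellations are identical. (One small omission: in the case $A=\{k\}$ you assert that only $i=k$ contributes on the right-hand side without noting that for $i\neq k$ the site $k$ remains in each of $\omega\cap\sigma^i$, $\omega\cap\sigma^{i,B}$, $\omega\cap\sigma$, forcing all three $H$-values to vanish; this is the same observation you already used for $|A|\geq2$.)
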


\begin{proof}
According to Theorem 3.42 in \cite{liggett2010continuous}, we only need to check that $$\L H(\cdot, \sigma)(\omega) = \M H(\omega, \cdot)(\sigma)$$ for every $\omega\in \{0,1\}^{\Z^d}$ and $\sigma \in \Sigma$, providing these two quantities have a meaning.

Let $\omega \in \{0,1\}^{\Z^d}$ and $\sigma \in \Sigma$.
The fact that we can apply $\L$ to $H(\cdot, \sigma)$ follows from the fact that $\sigma$ have finitely many 1, so $H(\cdot, \sigma)$ is a local function; $H(\omega, \cdot)$ can be applied to $\M$, whose action is defined on every function.
It is therefore enough to show that for every site $i \in \Z^d$, 
\begin{equation}\label{duality}
    c(i,\omega) \left[H(\omega^i, \sigma) -H(\omega, \sigma) \right] = \sigma(i) \left[ (1-\gamma) H(\omega, \sigma^{i,B}) + \gamma H(\omega, \sigma^i) - H(\omega, \sigma) \right],
\end{equation}
where $c(i,\omega)$ is given by \eqref{rate_fct},
since we can then sum over $i \in \Z^d$ to get $\L H(\cdot, \sigma) (\omega) = \M H(\omega,\cdot) (\sigma)$, which is what we want to prove.

For every $i \in \Z^d$, we have
\begin{enumerate}[label=(\roman*)]
    \item $H(\omega^i, \sigma) - H(\omega, \sigma) = \cas{1}{\text{if } \omega(i) = \sigma(i) = 1 \text{ and } \forall j \neq i, \omega(j) \sigma(j) = 0}{-1}{\text{if } \omega(i) = 0, \sigma(i) = 1 \text{ and } \forall j, \omega(j) \sigma(j) = 0};$
    
    \item $H(\omega, \sigma^i) - H(\omega, \sigma) = \cas{1}{\text{if } \omega(i) = \sigma(i) = 1 \text{ and } \forall j \neq i, \omega(j) \sigma(j) = 0 }{-1}{\text{if }\omega(i) = 1, \sigma(i) = 0 \text{ and } \forall j , \omega(j) \sigma(j) = 0};$
    
    \item $H(\omega, \sigma^{i,B}) - H(\omega, \sigma) = \cas{1}{\text{if } \omega(i) = \sigma(i) = 1, \forall j \sim i, \omega(j) =0 \text{ and } \forall j \neq i, \omega(j) \sigma(j) = 0}
    {-1}{\text{if } \exists j \sim i, \omega(j) = 1 \text{ and } \forall j, \omega(j) \sigma(j) = 0}.$
\end{enumerate}
In those three equations, in all other cases, the left-hand side equals zero.
Thus, in particular, if there exists $j \neq i$ such that $\omega(j) \sigma(j) = 1$, both sides of \eqref{duality} are null. It is the same if $\sigma(i)=0$, since (i) equals 0 in this case.
We therefore assume that $\sigma(i) = 1$ and that for every $j \neq i$, $\omega(j) \sigma(j) = 0$, and check the equality in \eqref{duality}.
\begin{itemize}
    \item \underline{If $\omega(i) = 1$ :}
In this case, with our assumptions, (i) and (ii) equals 1, whereas (iii) equals $\ind{\forall j \sim i, \omega(j) = 0}$.
Thus, the left-hand side of \eqref{duality} equals $\gamma+(1-\gamma) \ind{\forall j \sim i, \omega(j) = 0}$
and the right-hand side $\sigma(i) \left[(1-\gamma) \ind{\forall j \sim i, \omega(j) = 0} +\gamma \right]$, so they coincide.

    \item \underline{If $\omega(i) = 0$ :}
Now, (i) is $-1$, (ii) is 0 and (iii) equals $-\ind{\exists j \sim i, \omega(j)=1}$, so both sides of \eqref{duality} equal $-\eta \ind{\exists j \sim i, \omega(j)=1}$.\qedhere
\end{itemize} \end{proof}

We now prove the key result of this paragraph. We do the same identification as before between subsets of $\Z^d$ and elements of $\{0,1\}^{\Z^d}$, this time in the other direction.

\begin{prop}\label{prop:duality_spin_system_backward_exploration}
    For all $\tau\in \R$, $(\omega^\tau_t)_{t\geq \tau}$ and $(\hat \sigma_{\tau-t})_{t \geq \tau}$ are dual with respect to $H(\omega, \sigma)=\ind{\omega \cap \sigma=\emptyset}.$
\end{prop}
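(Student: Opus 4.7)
Lemma \ref{prop_duality} already establishes that the generators $\L$ and $\M$ are dual with respect to $H$. By the Hille--Yosida--type argument invoked in its proof (Theorem 3.42 in \cite{liggett2010continuous}), such generator-level duality automatically lifts to process-level duality in the sense of Definition \ref{def:duality}, provided we identify both $(\omega^\tau_t)_{t\geq\tau}$ and $(\hat\sigma_{\tau-t})_{t\geq\tau}$ as Markov processes with generators $\L$ and $\M$ respectively. The former was established by construction in Section \ref{Glauber dynamics for Gaussian model}. The heart of the proof is therefore to show that $(\hat\sigma_{\tau-t})_{t\geq\tau}$, viewed forward in $t$, is a Markov jump process on $\Sigma$ with generator $\M$ started from $\hat\sigma_0=\{0\}$.

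\textbf{Transition analysis.} I would fix an update mark $\rho=(i,r,u)\in\PP$ with $r<0$ and set $\sigma:=\hat\sigma_r$. The task is to describe how $\hat\sigma$ changes as the original time variable decreases across $r$. If $i\notin\sigma$, no open active path from $(0,0)$ reaches the mark, so the arrows emitted at $(i,r)$ are not traversed and $\hat\sigma_{r-}=\sigma$. If $i\in\sigma$ and $u\leq\gamma$ (closed mark), open active paths reaching $(i,r)$ cannot extend through; moreover the mark-free segment condition in the definition of an active path almost surely forbids any open active path from reaching $(i,t)$ for $t$ in a left-neighbourhood of $r$, since the last arrow would have to land on $i$ at a time arbitrarily close to but below $r$ via a mark at a neighbour, and generically no such neighbour-mark is present in that vanishing window. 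Sites $j\ne i$ in $\sigma$ persist, so $\hat\sigma_{r-}=\sigma\setminus\{i\}=\sigma^i$. If $i\in\sigma$ and $u>\gamma$ (open mark), every open active path reaching $(i,r)$ extends along the arrows at $(i,r)$ to all neighbours $j\in i+B$, so these belong to $\hat\sigma_{r-}$; the same argument as in the closed case removes $i$, and sites outside $i+B$ are unaffected. Thus $\hat\sigma_{r-}=\sigma^{i,B}$.

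\textbf{Conclusion and main obstacle.} Since the projection of $\PP$ on each site is an independent rate-$1$ Poisson process and the labels $U_\rho$ are i.i.d.\ uniform on $[0,1]$, the case analysis shows that for each $i$ with $\sigma(i)=1$ the process jumps to $\sigma^{i,B}$ at rate $1-\gamma$ and to $\sigma^i$ at rate $\gamma$, and does not move at sites with $\sigma(i)=0$; this is exactly the generator $\M$ of \eqref{generator_M}. The Markov property in the $t$-variable follows from the independent-increments property of $\PP$: the evolution of $(\hat\sigma_{\tau-t})_{t\geq\tau}$ past time $t$ depends only on the current configuration and on the update marks at original time $<\tau-t$, which are independent of the marks used up to time $t$. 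Combining this with Lemma \ref{prop_duality} yields the duality identity. I expect the main obstacle to be the fine verification in the open case that $i$ truly disappears from $\hat\sigma_{r-}$ while every neighbour of $i$ joins it; this rests on the precise definition of mark-free segments and an almost-sure argument to rule out residual open active paths of positive length whose last arrow happens to fall in an arbitrarily short window just below $r$ at a neighbour of $i$.
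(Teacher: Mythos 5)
Your proposal follows the same route as the paper: reduce, via Lemma~\ref{prop_duality} and the Hille--Yosida argument of Theorem~3.42 in \cite{liggett2010continuous}, to checking that the time-reversed exploration $(\hat\sigma_{\tau-t})_{t\geq\tau}$ is a Markov jump process with generator $\M$, and then verify this by classifying the transition at an update mark $(i,r,u)$ according to whether $i\in\hat\sigma_r$ and whether $u\le\gamma$. The technical worry you flag at the end — that no residual open active path reaches $(i,r-)$ — is handled in the paper by exactly the observation you already give: the PPP restricted to a finite set of sites has no accumulation of marks, so there is a.s.\ a mark-free window just below $r$ at the neighbours of $i$, which makes $\hat\sigma$ right-continuous in the original time variable and forces $i\notin\hat\sigma_{r-}$.
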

\begin{proof}
    Fix $\tau \in \R$. According to Lemma \ref{prop_duality}, one only has to verify that the generator of $(\hat \sigma_{\tau-t})_{t \geq \tau}$ is $\M$. Recall that this process has been defined through the graphical representation, and note that it is left-continuous.
    To understand the transition mechanism of $(\hat \sigma_{\tau-t})_{t \geq \tau}$, we look at the transition encoded by some update mark $\rho=(i,\tau-t,u) \in \PP$, for some $t\geq \tau$.
    If $\hat \sigma_{\tau-t}(i)=0$, there is no open active path from $(0,0)$ to $(i, \tau-t)$ so the update mark does not affect the process.
    Otherwise, when $\hat \sigma_{\tau-t}(i)=1$, there are two cases, depending on whether $\rho$ is open or not:
    if $\rho$ is closed, the open active path stops at $(i,\tau-t)$ so $i$ does not belong to $\hat \sigma_{(\tau-t)-}$;
    that corresponds to the transition from $\sigma$ to $\sigma^i$ when $u \leq \gamma$;
    if $\rho$ is open, it yields an open active path from $(0,0)$ to $(j,(\tau - t)-)$ for all neighbours $j$ of $i$ (and no more active path from $(0,0)$ to $(i,(\tau-t)-)$); hence $\hat\sigma_{(\tau-t)-}(j) = 1$ for every $j\sim i$ (and $\hat\sigma_{(\tau-t)-}(i)=0$); that corresponds to the transition from $\sigma$ to $\sigma^{i,B}$ when $u>\gamma$.

    Therefore, for every $i \in \Z^d$, transitions in $(\hat \sigma_{\tau-t})_{t \geq \tau}$ from the current configuration $\sigma$ to $\sigma^i$ appear at rate $\gamma$, but are discarded if $\sigma(i)=0$; similarly, transitions from $\sigma$ to $\sigma^{i,B}$ appear at rate $1-\gamma$, but are discarded if $\sigma(i)=0$. These are exactly the rates prescribed by $\M$.
\end{proof}

The duality relation \eqref{def_duality} applied to $(\omega^\tau_t)_{t\geq \tau}$ and $(\hat \sigma_{\tau-t})_{t \geq \tau}$ at time $t=0$ therefore gives
$$\P(\omega_0^\tau(0) = 0) = \P(\hat \sigma_\tau = \emptyset),$$
recalling that $\omega^\tau_\tau \equiv1$ and $\hat \sigma_0=\{0\}.$
Together with the first inclusion, given by Lemma \ref{couplage_disjoint}, we get for all $\tau \in \R$
\begin{equation}\label{eq:duality_bilan}
    \{\omega_0^\tau(0) = 0\}\as{=}\{\hat \sigma_\tau = \emptyset\}.
\end{equation}
This characterises $\tau_0$ as the extinction time of the backward exploration process $(\hat \sigma_t)_{t \leq 0}$.
In the following paragraph, we prove that this process dies out in finite time almost surely, which yields a coding of $\mu$, by Proposition \ref{prop:CFTP}. We then explain how to control the tail of the coding radius to conclude the proof of Theorem \ref{thm_compact}.

\subsection{Proof of Theorem \ref{thm_compact}}
We start by proving that the backward exploration terminates in finite time. As mentioned in the Introduction, the proof can be summarised as a domination by a Galton-Watson process of reproduction law $\gamma \delta_0+(1-\gamma)\delta_{|B|}$. 
\begin{lemme}\label{extinction}
    If $\gamma > 1-\frac{1}{|B|}$, then almost surely there exists $\tau>-\infty$ such that $\hat \sigma_\tau = \emptyset$ (i.e. $(\hat \sigma_t)_{t \leq 0}$ dies out).
\end{lemme}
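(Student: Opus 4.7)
The plan is to compare $(\hat\sigma_{-s})_{s \geq 0}$ to a subcritical continuous-time Markov branching process. By Proposition \ref{prop:duality_spin_system_backward_exploration} together with the description of the generator $\M$ given above, the process $(\hat\sigma_{-s})_{s\geq 0}$ started from $\hat\sigma_0 = \{0\}$ is a jump process in which each occupied site $i \in \hat\sigma_{-s}$ waits for an independent $\mathrm{Exp}(1)$ time before being updated, and that update either removes $i$ from the configuration (with probability $\gamma$, when $u \leq \gamma$) or removes $i$ and simultaneously adds all of its neighbours $j \in i+B$ (with probability $1-\gamma$, when $u > \gamma$).

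The first step is to introduce a continuous-time Markov branching process $(Z_s)_{s \geq 0}$, started from a single particle, in which every particle lives for an independent $\mathrm{Exp}(1)$ time and, at death, is replaced by $0$ children with probability $\gamma$ or by exactly $|B|$ children with probability $1-\gamma$. I would then couple $Z$ with $\hat\sigma$ in the natural way: label each site $i \in \hat\sigma_{-s}$ with a distinct particle of $Z$ and synchronise their transitions through the same Poisson mark and its label $u$. The only one-sided discrepancy appears when an opening update mark tries to place a child of $Z$ at some neighbour $j \in i+B$ which already belongs to $\hat\sigma_{-s}$: the cardinality of $\hat\sigma$ then increases by less than $|B|-1$, whereas in $Z$ every one of the $|B|$ children is genuinely counted. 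Hence the coupling yields $|\hat\sigma_{-s}| \leq Z_s$ for all $s \geq 0$, almost surely.

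Second, the offspring distribution $\gamma \delta_0 + (1-\gamma) \delta_{|B|}$ has mean $m = (1-\gamma)|B|$, and the assumption $\gamma > 1 - 1/|B|$ is precisely equivalent to $m < 1$. The classical extinction theorem for subcritical continuous-time Markov branching processes then gives $Z_s = 0$ for all $s$ large enough, almost surely; in fact $\P(Z_s > 0)$ decays exponentially in $s$, a fact that will also be useful when controlling the tail of the coding radius for the full Theorem~\ref{thm_compact}. Combined with the domination, this produces a finite $s$ (hence a $\tau = -s > -\infty$) such that $\hat\sigma_\tau = \emptyset$ almost surely.

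The only genuine piece of bookkeeping is to make the labelling of particles precise so that the coupling truly respects the shared PPP; this is routine once one recognises that any merger of branches in the actual process can only decrease the total count relative to $Z$. I do not expect any serious obstacle beyond this one-sided comparison, since all of the "work" has been done by identifying the right dual process in Proposition \ref{prop:duality_spin_system_backward_exploration} and by the classical theory of branching processes.
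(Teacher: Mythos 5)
The paper's proof of Lemma~\ref{extinction} does not try to realise the branching-process comparison as a pathwise coupling; it runs a pure first-moment (union bound) argument over open active paths. Conditionally on the space-time positions of the update marks, there are at most $|B|^{n-1}$ active paths of length $n$ starting from $(0,0)$; each such path crosses $n$ distinct marks whose $U$-labels are i.i.d.\ $\mathcal{U}[0,1]$, so it is entirely open with probability $(1-\gamma)^n$. A union bound then gives
$\P\left(\exists \text{ an open active path of length } n \text{ starting at } (0,0)\right) \leq |B|^{n-1}(1-\gamma)^n \to 0$,
and since survival of $(\hat\sigma_t)_{t\leq 0}$ is equivalent to the existence of arbitrarily long open active paths from $(0,0)$, the survival probability is zero. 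This is the expected-population calculation implicit in your branching comparison, carried out without ever constructing a coupling.

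Your proposal, in contrast, asserts a pathwise domination $|\hat\sigma_{-s}| \leq Z_s$ and then cites the classical extinction theorem for $Z$; this is where there is a genuine gap, not mere bookkeeping. If you synchronise $Z$ with the \emph{same} Poisson marks as $\hat\sigma$ (as you propose), then two $Z$-particles sitting at the same site share a clock and, at every update, either all die together or all branch together. The resulting process is \emph{not} a standard continuous-time Markov branching process --- the particles are correlated --- so ``the classical extinction theorem for subcritical continuous-time Markov branching processes'' cannot be cited off the shelf. If instead you give the $Z$-particles independent clocks so that the theorem does apply, then the natural pathwise inequality $|\hat\sigma_{-s}| \leq Z_s$ fails, because the update times of the two processes decouple. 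The resolution is to drop the explicit coupling and observe, as the paper does, that the \emph{expected} number of surviving generation-$n$ lineages is at most $|B|^{n-1}(1-\gamma)^n$ regardless of any correlations between lineages; this first-moment bound gives almost-sure extinction directly. So your intuition is exactly the one the paper advertises (``a domination by a Galton-Watson process of reproduction law $\gamma\delta_0 + (1-\gamma)\delta_{|B|}$''), but your written argument invokes a theorem that does not apply to the coupled process you have constructed.
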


\begin{proof}
    The survival of $(\hat \sigma_t)_{t \leq 0}$ is equivalent to the existence of an infinite open active path starting from $(0,0)$.
    An infinite active path starting from $(0,0)$ is identified with the sequence $(\rho_n)_{n \in \N}$ of marks it crosses (hence one has $\rho_0 = \rho(0,0)$, defined in \eqref{rho}). 
    For any $n \in \N$, there exists an open active path of length $n$ starting from $(0,0)$ if and only if there exist $n$ open marks $(\rho_0, \dots, \rho_{n-1})$ with $\rho_0 = \rho(0,0)$ that correspond to the first $n$ marks of an infinite active path.
    Since two consecutive marks in the sequence must be carried by neighbours, there are at most $|B|^{n-1}$ such sequences.
    
    Each mark is open with probability $1-\gamma$.
    Moreover, the third coordinates of the update marks crossed by an active path are independent, by property of the PPP. 
    Therefore, by a union bound, we get
    \begin{equation}\label{active_path_n}
        \P\left(\exists \text{ an open active path of length $n$ starting at $(0,0)$}\right) \leq |B|^{n-1} (1-\gamma)^n.
    \end{equation}
    This quantity tends to $0$ as soon as $1-\gamma < \frac{1}{|B|}$.
    In this case, the probability of survival of $(\hat \sigma_t)_{t \leq 0}$ falls to 0, so almost surely $(\hat \sigma_t)_{t \leq 0}$ dies out in finite time.    
    \end{proof}

\begin{proof}[Proof of Theorem \ref{thm_compact}]
Combining the results of Lemma \ref{couplage_disjoint} and Proposition \ref{prop:duality_spin_system_backward_exploration}, we already noticed that 
$$\{\omega_0^\tau(0) = 0\}\as{=}\{\hat \sigma_\tau = \emptyset\},$$see \eqref{eq:duality_bilan}.
By Lemma \ref{extinction}, this implies that $\tau_0$ is almost surely finite. Then, Lemma \ref{lemme:spin_0_implies_coalescence} and Proposition \ref{prop:CFTP} yields a realisation $(X_i^*)_{i\in \Z^d}$ of $\mu$ as a measurable function of $\PP$, that is we have a coding for $\mu$.

We finish the proof of Theorem \ref{thm_compact} by studying the coding radius of this coding.
The coding radius $R$ is defined as the (random) minimal integer $r$ such that updates marks carried by sites in $B^0_r$ only are used to generate $X_0^*$.
The update marks used to generate $X^*_0$ are the open ones that are crossed by an open active path from $(0,0)$ and the closed ones at which such a path end up.
Indeed, the value of the field at the origin at time 0 in the dynamics started at time $\tau <0$ is a priori a measurable function of the update marks in $\A_{\rho(0,0)}$ and of the initial configuration.
However, if $\tau$ is chosen lower than $\tau_0$, then all the open active paths end up at a closed mark before reaching time $\tau$.
For these closed marks, the output of the update function $\varphi$ is independent of the initial configuration. Then, it follows that the values of the field in the dynamics along all these active paths are independent of the initial configuration.


Let $r=\max \{||j||_1, j \in B\}$. A step of length 1 in an active path cannot travel a distance greater that $r$ in $\Z^d$.
Thus, an update mark carried by a site outside $B_{rn}^0$ needs to be revealed to generate $X_0^*$ only if there is an open active path of length at least $n$ starting from $(0,0)$. Then, we derive from \eqref{active_path_n} an upper bound on the tail of $R$, since
$$\P(R \geq rn) \leq \P\left(\exists \text{ an open active path of length $n$ starting at $(0,0)$}\right) \leq |B|^{n-1} (1-\gamma)^n.$$
It shows the exponential decay of the tail of the coding radius when $1-\gamma < \frac{1}{|B|}$.
\end{proof}

\section{Coding without the high noise assumption}\label{Proof of thm mu}
As discussed in Section \ref{Maximal coupling probability}, for $\mu=\mu_\varepsilon$, we have $\gamma(\mu) = 0$, so the previous theorem cannot apply (except for $\varepsilon =0$ which actually corresponds to the i.i.d. Gaussian field).
We therefore need a new strategy to get a non-trivial coupling probability.

Let $(S_n)_{n \geq1}$ be an increasing sequence of subsets of $S$ such that $S_n \nearrow S$, $o$ a distinguished point in $S_1$, and $(q_n)_{n \geq0}$ an increasing to 1 sequence in $[0,1]$.
We call $\mathbf{0}$ the configuration of $\Omega$ with $o$ everywhere; abusing the notation, we also denote by $\mathbf{0}$ its restriction to $B$.
Let
$$\tilde \gamma(\mu) = \max_\mathbb{P} \mathbb{P}(\forall \eta \in S_1^B, X^\eta = X^{\mathbf 0}\text{ and } X^{\mathbf 0} \in S_1),$$
where the maximum is taken over all the couplings of random variables 
$(X^\eta)_{\eta \in S_1^B}$ such that $X^\eta$ has law $\pi^\eta=\mu(X_0 \in \cdot \mid X|_B=\eta)$.
Here, $\tilde \gamma(\mu)$ corresponds to the maximal probability for realisations of
$\pi^\eta$, for $\eta \in S_1^B$, to coincide in a coupling and belong to $S_1$. 

Like in the previous section, we prove a general result of which Theorem \ref{thm_mu} is a special case.

\begin{thm}\label{thm_general}
    Suppose that $(S_n)_{n \geq1}$ and $(q_n)_{n \geq 0}$ as above satisfy the four following hypotheses:
\begin{equation*}\tag{H1}\label{link_L0_p0}
    \tilde \gamma(\mu) \geq q_0,
\end{equation*}
\begin{equation*}\tag{H2}\label{link_Ln_pn}
    \forall n \geq 1, \; \forall \eta \in S^B_{n+1}, \; \mu(X_0 \in S_n \mid X|_B=\eta) \geq q_n,
\end{equation*}
\begin{equation*}\tag{H3}\label{condition_qn}
        8\sum_{n \geq 0} |B|^{2n+1} (1-q_n) <1 \text{ and }\exists t>0,  \sum_{n \geq 0} |B|^n e^{t|B|^n} (1-q_n) < \infty,
    \end{equation*}
\begin{equation*}\tag{H4}\label{condition_Ln}
    \mu(\{\xi \in \Omega: \forall i \in \Z^d, \xi_i \in S_{\max(n, ||i||)} \})\xrightarrow[n\to +\infty]{}1.
\end{equation*}
    Then, $\mu$ is an FIID and there exists $c>0$ such that for all $l\in \N$, there exists an $l$--dependent distribution $\nu_l$ on $\Omega$ satisfying for every finite subset $\Lambda \subset \Z^d$,
        $$d_{\mathrm{TV}}(\mu|_\Lambda, \nu_l |_\Lambda) \leq |\Lambda| e^{-cl}.$$
\end{thm}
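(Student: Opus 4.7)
The plan is to extend the coupling-from-the-past scheme of Theorem \ref{thm_compact} via \emph{stratification}: each site carries a level $n \in \N \cup \{\infty\}$, interpreted as ``the value of the field at that point is known to belong to $S_n$ independently of the initial configuration,'' with level $0$ meaning coalescence. The first step is to design an update function $\varphi : S^B \times [0,1] \to S$ together with a family of disjoint subintervals $I_n \subset [0,1]$ of respective measures at least $q_n - q_{n-1}$ (with $q_{-1}=0$), so that on $I_0$, $\varphi(\eta,u)$ is constant in $\eta \in S_1^B$ and lies in $S_1$ (supplied by H1), and on $I_0 \cup \dots \cup I_n$, $\varphi(\eta,u) \in S_n$ whenever $\eta \in S_{n+1}^B$ (supplied by H2). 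This is a layered quantile coupling, in the spirit of \cite{HaggstromSteif, ferrari} and the Appendix.

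Using the same graphical representation as in Section \ref{Glauber dynamics for Gaussian model}, I would then define a stratified particle system $(\omega^\tau_t)_{t \geq \tau}$ valued in $(\N \cup \{\infty\})^{\Z^d}$ and initialised at $\omega^\tau_\tau \equiv \infty$, together with a dual backward exploration started at $(0,0)$ with a single level-$0$ requirement. Reading marks backward, a level-$n$ requirement at a site $i$ is \emph{satisfied} by the most recent mark there whenever its uniform variable $u$ falls in $I_n$ (probability $\geq q_n$); otherwise it spawns $|B|$ level-$(n+1)$ requirements at the neighbours of $i$. Following the pattern of Lemma \ref{prop_duality} and Proposition \ref{prop:duality_spin_system_backward_exploration}, the goal is to prove a duality of the form
\[
\{\omega_0^\tau(0) = 0\} = \{\text{every branch of the backward exploration is satisfied strictly above time } \tau\}.
\]

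The main obstacle is that, unlike the Galton–Watson comparison of Lemma \ref{extinction}, the backward exploration does \emph{not} die out in finite time: each failure escalates the level, and the cascade can continue indefinitely. The first inequality in H3 provides, via a multi-type branching computation (where a level-$n$ particle produces on average at most $|B|(1-q_n)$ level-$(n+1)$ particles, and the pre-factor $|B|^{2n+1}$ absorbs both the combinatorial explosion of the $|B|^n$ sites potentially visited and a square used to control second moments), a sub-critical bound on the total number of requirements. The second inequality in H3 upgrades this to an exponential tail for the spatial range $R$ of the exploration, namely $\P(R \geq l) \leq C e^{-cl}$ for some $c, C > 0$; the factor $e^{t|B|^n}$ is precisely calibrated for the exploration to remain under control at each level despite the spatial spread $\sim |B|^n$ at level $n$. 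Hypothesis H4 then replaces the simple ``$\tau \to -\infty$'' step of Proposition \ref{prop:CFTP}: although the exploration has infinitely many unresolved leaves, at each of them one only needs a reference configuration lying in some $S_n$ with $n$ controlled by the distance to the origin, and H4 ensures that a $\mu$-typical configuration does so, thereby defining $X^*$ as a measurable function of $\PP$ and realising $\mu$ as an FIID.

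For the $l$-dependent approximation, the plan is to truncate the backward exploration at spatial distance $l/2$ from each site of $\Lambda$. Conditional on the event that every such exploration stays in a ball of radius $l/2$, the configuration on $\Lambda$ is a function of update marks within an $(l/2)$-enlargement of $\Lambda$; patching together on disjoint $(l/2)$-neighbourhoods produces an $l$-dependent distribution $\nu_l$, and a union bound gives
\[
d_{\mathrm{TV}}(\mu|_\Lambda, \nu_l|_\Lambda) \leq |\Lambda| \cdot \P(R \geq l/2) \leq |\Lambda| e^{-cl}.
\]
Note that, in contrast with Theorem \ref{thm_compact}, no finitary coding is obtained — and indeed none can exist once the backward exploration is almost surely infinite — yet the total-variation bound matches exactly what \eqref{eq:distance_to_l-dep} would yield for a finitary coding with exponential tails.
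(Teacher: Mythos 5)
Your strategy matches the paper's skeleton --- stratify the state space, run a stratified particle system and its dual backward exploration, deduce subcriticality from \eqref{condition_qn}, use \eqref{condition_Ln} for a weakened coupling-from-the-past step, and truncate the factor map for the $l$-dependent approximation --- but the dynamics of the backward exploration are described incorrectly, and this breaks the branching comparison you invoke. You write that a level-$n$ requirement at $i$ is \emph{satisfied} when the most recent uniform variable falls in $I_n$, and ``otherwise it spawns $|B|$ level-$(n+1)$ requirements'', so that ``each failure escalates the level.'' That dichotomy cannot be right, because the guarantee provided by \eqref{coupling_q_L} is conditional: $\varphi(\eta,u)\in S_n$ requires \emph{both} $u\le q_n$ \emph{and} $\eta\in S_{n+1}^B$. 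Even when $u$ is favourable, one still needs the level-$(n+1)$ bound on the neighbours, so a good mark \emph{also} spawns $|B|$ level-$(n+1)$ requirements; a bad mark (a mark $\rho$ at distance $n$ from $\rho_0$ with $U_\rho>q_n$) cannot be compensated at any level and instead makes $\rho_0$ \emph{wet}. The exploration therefore genuinely never dies out, and the relevant event is not extinction of a Galton--Watson tree but finiteness of the wet cluster of $\rho_0$ in a Boolean percolation on the directed graph of active marks; this is precisely what Theorem 2.4 of \cite{Faipeur}, applied with $\Delta_+=\Delta_-=|B|$, gives under \eqref{condition_qn}, and it is the actual source of the factor $4|B|^{2n+1}$ --- not a spatial-spread-plus-second-moment heuristic.

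A second consequence of the inverted picture is that you have no way to define $X_0^*$ when the exploration ``fails at the root.'' Since $\rho_0$ itself may be wet, one cannot set $X_0^*=\varphi(\mathbf{0},U_{\rho_0})$. The proof instead extracts the first \emph{dry cutset} $\C$ above $(0,0)$ (almost surely finite by the Boolean-percolation bound), picks $\tau_0$ below every mark of $\C$, and sets $X_0^*:=X^{\mathbf{0},\tau_0}_0(0)$; then \eqref{condition_Ln} enters through Lemma \ref{lemma:X_rho_dry_marks} to verify the hypothesis of Proposition \ref{prop:strong_CFTP}. Your reading of \eqref{condition_Ln} and the final truncation step are both in the right spirit, but without the Boolean-percolation subcriticality and the dry-cutset construction the argument does not close.
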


Note that in the above statement, we say nothing about the tail of the coding radius. The rest of Section \ref{Proof of thm mu} is devoted to the proof of Theorem \ref{thm_general}.

\subsection{Interacting particle system associated with the dynamics}\label{Interacting particle system associated with the dynamics}
Similarly as for the proof of Theorem \ref{thm_compact}, we consider the dynamics introduced in Section \ref{Glauber dynamics for Gaussian model} that admits $\mu$ as invariant law, and the coupling of processes $(X^{\xi,\tau})_{\xi \in \Omega, \tau \in \R}$ defined with the graphical representation.
Assume that there exist $(S_n)_{n \geq 1}$, referred to as the \emph{levels}, and $(q_n)_{n \geq 0}$ such that \eqref{link_L0_p0}, \eqref{link_Ln_pn}, \eqref{condition_qn} and \eqref{condition_Ln} hold.

The first two hypotheses result in two properties on the update function $\varphi$:
\begin{equation}\label{maximal_coupling}
    \forall \eta \in S_1^B, \forall u \in [0,q_0], \varphi(\eta,u) = \varphi(\mathbf{0},u) \text{ and } \varphi(\mathbf{0},u) \in S_1
\end{equation}
and 
\begin{equation}\label{coupling_q_L}
    \forall n \geq 1, \forall \eta \in S^B_{n+1}, \forall u \in [0,q_n], \varphi(\eta,u) \in S_n.
\end{equation}
We explain in the Appendix how to construct an update function that satisfies these properties.

This means that in a realisation of the dynamics, for each update mark $(i,t,u) \in \PP$, if we happen to know that the values of the field on the neighbourhood of $i$ belong to $S_{n+1}$ for some $n\geq 1$ at time $t-$, and if $u \leq q_n$, then the new value of the field at $i$ will be in $S_n$.
When the neighbours are in $S_1$, if $u \leq q_0$, then the new value of the field at $i$ will also belong to $S_1$ and does not depend on the specific values of the neighbours, provided that they belong to $S_1$.

We associate with the dynamics an \emph{interacting particle system} which will indicate our knowledge of the coalescence of the algorithm.
However, contrary to Section \ref{Spin system associated with the dynamics}, particles can have many states, not just 0 or 1. It will therefore no longer be a spin system in the sense of \cite{Liggett1985InteractingParticleSystems}, but we still call the \emph{spin} the state of a site in a certain configuration.

The set of spin configurations is $\N^{\Z^d}$.
For any $\tau \in \R$ and $\kappa \in \N^{\Z^d}$, define a continuous-time Markov process $(\omega^{\kappa, \tau}_t)_{t \geq \tau} $ on $\N^{\Z^d}$ started at time $\tau$ with initial configuration $\omega^{\kappa, \tau}_\tau=\kappa$, and whose evolution is governed as follows.
For all $\rho=(i,t,u) \in \PP$ with $t> \tau$, letting $m=\max_{j \sim i} \omega_{t-}^{\kappa, \tau} (j)$, and
\begin{equation}\label{eq:def_s_rho}
    \mathfrak{s}_\rho:=\inf\{k : u \leq q_k\},
\end{equation}
the spin at $i$ is updated with the following rules:
\begin{enumerate}
    \item if $m=0$, put spin $\mathfrak{s}_\rho$ at site $i$;
    \item if $m \in \N^*$, put spin $\max(m-1,\mathfrak{s}_\rho)$ at site $i$;
\end{enumerate}

We now provide an intuitive interpretation of the different possibilities for a spin value.
For every $k \geq 1$, spin $k$ means that we know in a measurable way from the update marks that the value of the field belongs to $S_k$. However, spin 0 means that the value of the field belongs to $S_1$ and is independent of the initial configuration (i.e. it is a measurable function from the update marks).
With these interpretations, we can understand transition rules 1 and 2 above as translations of \eqref{maximal_coupling} and \eqref{coupling_q_L}.
If all the neighbours are in $S_m$ for $m\geq 1$ and if one encounters an update mark $\rho$ such that $m-1 \geq\mathfrak{s}_\rho$, i.e. $U_\rho\leq q_{m-1}$, then one can reach the spin $m-1$, because we know from \eqref{coupling_q_L} that the output of $\varphi$ belongs to $S_{m-1}$ (case $m>1)$, and the same holds for $m=1$ by \eqref{maximal_coupling}; in this case, we moreover know that the output of $\varphi$ is independent of the initial configuration of the dynamics.
If $m-1 < \mathfrak{s}_\rho$, the update mark cannot allow spin $m-1$; the smallest spin that is reachable is in fact $\mathfrak{s}_\rho$ (considering \eqref{coupling_q_L} with $n=\mathfrak{s}_\rho$), because we have $u\leq q_{\mathfrak{s}_\rho}$ and the neighbours are in $S_m \subset S_{\mathfrak{s}_\rho+1}$.
(The case $m=0$ is slightly different simply because there is no spin $m-1$).
The assumption $q_n \xrightarrow[n \to \infty]{} 1$ ensures that $\mathfrak{s}_\rho$ is finite.
\begin{rem}
    During an update at site $i$, if there is coalescence for all the neighbours of $i$, then the new value at site $i$ does not depend on the initial configuration, so there is coalescence at $i$ after the update. However, if there is coalescence for some neighbours but not all of them, this information will not help to get coalescence or a good level for the new value at $i$. For example, if some neighbours have coalesce and the field takes values in $S_9$ there, and the other neighbours have a value in $S_1$, the best we can hope for the new value at $i$, according to \eqref{maximal_coupling} and \eqref{coupling_q_L}, is that it belongs to $S_8$.    
    This is why a spin 0 indicates that the value of the field is the same in all the coupled dynamics \emph{and} belongs to $S_1$.
    Thus, for some updates, we forget that we have coalescence and only keep information on the level.
\end{rem}

We now give the rate function and the generator of our interacting particle systems.
Let $i \in \Z^d$ and $\omega \in \N^{\Z^d}$.
For all $k \in \N$, call $\omega^i[k]$ the configuration obtained from $\omega$ by replacing the spin at $i$ by $k$, i.e.
$\omega^i[k](j)= \cas{\omega(j)}{\text{for } j\neq i}{k}{\text{for } j=i}$.
Set $m= \max_{j \sim i} \omega(j)$.
Then, the rate function $c(\cdot, \cdot, \cdot)$ is defined by
\begin{subequations}\label{rate_omega}
\begin{align}
    c(0,i,\omega) &= q_0 \si m \leq 1, \\
    c(k, i, \omega) &= \cas{q_k}{\si k=m-1}{q_k - q_{k-1}}{\si k \geq m} \text{ for every } k\in \N^*,
\end{align}
\end{subequations}
and $c(\cdot, i, \omega)$ equals zero in other cases.
Here, $c(\cdot, i, \omega)$ is the rate at which the system jumps from $\omega$ to $\omega^i[\cdot]$.
Note that for $k=\omega(i)$, $\omega$ and $\omega^i[k]$ coincide, so there is not an actual transition.
One may observe that for every fixed configuration $\omega \in \S^{\Z^d}$ and site $i\in \Z^d$, we have $$\sum_{k \in \N} c(k, i, \omega)= 1,$$
that is the total rate at which there is an update of the spin at $i$.
The generator $\L$ of the $(\omega^{\kappa, \tau}_t)_{t \geq \tau}$ is then defined by the following relation: for all $\omega \in \S^{\Z^d}$ and every local function $f$,
\begin{equation}\label{generator_L_bis}
    \L f(\omega) = \sum_{i \in \Z^d} \sum_{k \in \N} c(k, i, \omega) (f(\omega^i[k]) - f(\omega)).
\end{equation}

These rates are consistent with the transition mechanism described above. In fact, when meeting an update mark $\rho$, a transition to $\omega^i[0]$ takes place if $U_\rho\leq q_0$, only in the case where $m=0$ or 1;
for every $k \in \N^*$, a transition to $\omega^i[k]$ takes place when $k=m-1$ and $k \geq\mathfrak{s}_\rho$, that is $U_\rho \leq q_{k}$, or when $k \geq m$ and $k=\mathfrak{s}_\rho$, that is $q_{k-1} < U_\rho \leq q_k$.

The following result formally states the intuitive interpretation of the different values for a spin in the particle system.

\begin{lemme}\label{lemma:coalescence_when_omega=0}
    Let $\tau \in \R$ and $\kappa \in (\N^*)^{\Z^d}$ (i.e. a spin configuration without $0$). Denote by $\Omega_\kappa$ the set of configurations that are \emph{compatible} with $\kappa$, i.e.
    \begin{equation*}
        \Omega_\kappa:=\{\xi \in \Omega \mid \forall i \in \Z^d, \xi_i \in S_{\kappa(i)}\}.
    \end{equation*}
    Then, for every $t\geq \tau$ and $i \in \Z^d$, we have
    $$\{\omega_t^{\kappa, \tau}(i)=0\} \as{\subset} \{\forall \xi \in \Omega_\kappa, X^{\xi,\tau}_t(i)=\varphi(\mathbf{0}, U_{\rho(i,t)}) \in S_1\}$$
    and for all $n \geq 1$
    $$\{\omega_t^{\kappa, \tau}(i) \leq n\} \as{\subset} \{\forall \xi \in \Omega_\kappa, X^{\xi,\tau}_t(i) \in S_n\}.$$
\end{lemme}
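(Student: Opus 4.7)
The plan is to prove both inclusions simultaneously by induction on the updates of the system, reading the graphical representation forward in time from $\tau$.

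At the initial time $t=\tau$, we have $\omega^{\kappa,\tau}_\tau(i)=\kappa(i)\in \N^*$, so $\omega^{\kappa,\tau}_\tau(i)=0$ never holds and the first inclusion is vacuously true. For the second inclusion, if $\kappa(i)\leq n$, then by definition of $\Omega_\kappa$ and the fact that $(S_n)_{n\geq 1}$ is increasing, $\xi_i\in S_{\kappa(i)}\subset S_n$ for every $\xi\in \Omega_\kappa$, as required.

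Between consecutive update marks, neither $(\omega^{\kappa,\tau}_t)$ nor any of the coupled fields $X^{\xi,\tau}_t$ changes, so both inclusions are trivially preserved. It therefore suffices to check that they are preserved at a given update mark $(i,t,u)\in \PP$ with $t>\tau$. For sites $j\neq i$, the spins and the field values are unchanged, so the inductive hypothesis passes through. At site $i$, set $m=\max_{j\sim i}\omega^{\kappa,\tau}_{t-}(j)$; by the inductive hypothesis applied to each neighbour $j\sim i$, we have $X^{\xi,\tau}_{t-}(j)\in S_{\omega^{\kappa,\tau}_{t-}(j)}\subset S_m$ for every $\xi\in \Omega_\kappa$ (when $m<+\infty$), and moreover $X^{\xi,\tau}_{t-}(j)\in S_1$ does not depend on $\xi$ whenever the spin there has already reached 0. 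Since the update only modifies the spin/field at $i$, we have $X^{\xi,\tau}_t(i)=\varphi(X^{\xi,\tau}_{t-}|_{i+B},u)$.

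I would then do a short case analysis on $m$ and $u$, matching the transition rules \eqref{rate_omega}. For the first inclusion, $\omega^{\kappa,\tau}_t(i)=0$ forces $m\leq 1$ and $u\leq q_0$; the inductive hypothesis yields $X^{\xi,\tau}_{t-}|_{i+B}\in S_1^B$ for every $\xi\in \Omega_\kappa$, and then \eqref{maximal_coupling} gives simultaneously $\varphi(X^{\xi,\tau}_{t-}|_{i+B},u)=\varphi(\mathbf{0},u)\in S_1$, which is exactly the conclusion with $u=U_{\rho(i,t)}$. For the second inclusion with $n\geq 1$, the transition rules show $\omega^{\kappa,\tau}_t(i)\leq n$ forces $m\leq n+1$ and $u\leq q_n$; the inductive hypothesis then gives $X^{\xi,\tau}_{t-}|_{i+B}\in S_{n+1}^B$, and \eqref{coupling_q_L} yields $\varphi(X^{\xi,\tau}_{t-}|_{i+B},u)\in S_n$, as desired. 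The case $m=+\infty$ is vacuous since the new spin is then $+\infty$.

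The only technical point is to ensure that this stepwise induction is legitimate despite the PPP having infinitely many marks in any time interval: this is handled by the Harris percolation argument recalled in Section \ref{Glauber dynamics for Gaussian model}, which guarantees that on each finite time window the update marks organise into almost surely finite connected components, inside which finitely many updates can be processed one at a time. I do not expect any genuine obstacle; the proof is essentially a careful bookkeeping of the transition rules against the properties \eqref{maximal_coupling} and \eqref{coupling_q_L} of $\varphi$.
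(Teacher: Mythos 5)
Your proposal is correct and follows essentially the same route as the paper's own proof: both prove the two inclusions simultaneously by induction along the update marks of the graphical representation, noting that $m=\max_{j\sim i}\omega^{\kappa,\tau}_{t-}(j)$ stays finite, reading the spin transition rules to extract the constraints $m\leq 1,\,u\leq q_0$ (first inclusion) or $m\leq n+1,\,u\leq q_n$ (second inclusion), and then invoking \eqref{maximal_coupling} or \eqref{coupling_q_L} on $\varphi$. The only cosmetic difference is that the paper splits the case analysis on $m\geq 1$ versus $m=0$ while you fold it into the conclusion-side case analysis (your phrase ``$X^{\xi,\tau}_{t-}(j)\in S_{\omega^{\kappa,\tau}_{t-}(j)}$'' is slightly loose when a neighbour has spin $0$, since $S_0$ is undefined, but you correctly note in the same sentence that spin $0$ gives values in $S_1$, which is what is actually used).
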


\begin{proof}
    The second inclusion is true for all $i \in \Z^d$ and $n\geq 1$ at time $t=\tau$ because by definition of $\Omega_\kappa$, the event on the right has probability 1; the first one also holds because the event on the left-hand side is empty, as $\kappa(i) \neq0$. Consider $(i,t,u) \in \PP$ for some $t> \tau$, and suppose that both properties hold for all $s\in[\tau, t)$ (this is referred to as the ``induction hypothesis''). We check that they are a.s. preserved by the update, so that they hold at time $t$. We only need to verify the inclusions for site $i$, since only the spin at $i$ can be affected by the update. Let $n \geq 1$.
    
    Let $m=\max_{j \sim i} \omega_t^{\kappa, \tau}(j)$. 
    We first consider the case $m \geq 1$; by induction hypothesis, we know that almost surely for all $\xi \in \Omega_\kappa$, $X_t^{\xi, \tau}(j) \in S_m$ for all $j \sim i$.
    The value of the spin at $i$ after the update satisfies $$\omega_t^{\kappa, \tau}(i)=\inf\{k \geq m-1 : u\leq q_k\}.$$
    It is smaller than $n$ if and only if $u \leq q_n$ and $n\geq m-1$. In this case, by \eqref{coupling_q_L} with $\eta=(X_t^{\xi, \tau}(j))_{j \sim i} \in S_m^B \subset S_{n+1}^B$, it follows that $X_t^{\xi, \tau}(i) \in S_n$. Thus, one has $\omega_t^{\kappa, \tau}(i)=0$ when $m=1$ and $u \leq q_0$. In this case, \eqref{maximal_coupling} implies that for all $\xi \in \Omega_\kappa$, one has $X_t^{\xi, \tau}(i)=\varphi(\mathbf{0},u) \in S_1$, since $(X_t^{\xi, \tau}(j))_{j \sim i} \in S_1^B$.

    Turning to the case $m=0$, the induction hypothesis tells us that almost surely for all $\xi, \xi' \in \Omega_{\kappa}$, one has $X_t^{\xi, \tau}(j)=X_t^{\xi', \tau}(j) \in S_1$ for all $j \sim i$. We only use the fact that $(X^{\xi, \tau}_t(j))_{j\sim i} \in S_1^B$ and argue as in the case $m \geq1$.
\end{proof}

If one wanted to pursue the strategy of Section \ref{Proof of thm mu_L}, one should define for all $i \in \Z^d$ a random time $\tau_i$ such that coalescence from $\tau_i$ occurs at $(i,0)$ almost surely. In terms of the interacting particle systems defined in the present Section, it would correspond to take
$$\tau_i=\sup\{ \tau <0 \mid \omega^{\kappa, \tau}_0(i)=0\},$$
with a maximal initial configuration $\kappa$, because at the initial time $\tau$, no information about the field can be derived only from the update marks. Such a maximal configuration does not exist, so we cannot exploit Proposition \ref{prop:CFTP} directly. We prove here the following stronger version. The proof still relies on a coupling-from-the-past argument.

We define the following event: there is \emph{weak coalescence} at $(i,t) \in \Z^d \times \R$ if there exists $X^*(i,t) \in S$ measurable of the update marks such that
\begin{equation}\label{hyp_prop_weakCFTP}
    \mu\left(\{\xi \in \Omega : X_t^{\xi, \tau}(i) = X^*(i,t) \}\right) \xrightarrow[\tau \to -\infty]{} 1.
\end{equation}

\begin{prop}\label{prop:strong_CFTP}
Assume that almost surely for every $i\in \Z^d$, there is weak coalescence at $(i,0)$.
Then, the random field $X^*:= (X^*(i,0))_{i \in \Z^d}$ is distributed according to $\mu$. Moreover, $X^*$ provides a realisation of $\mu$ as an FIID.
\end{prop}

\begin{proof}
    Call $\nu$ the distribution of $X^*$.
    Fix $\Lambda \Subset \Z^d$ and $\delta>0$.
    For every $i\in \Lambda$ and $\tau<0$, define the random set $$\Omega^i_\tau := \{\xi \in \Omega : X_0^{\xi, \tau}(i) = X^*(i,0)\}.$$

    By assumption, a.s. there exists $\tau_i<0$ such that a.s. for every $\tau \leq \tau_i$, we have $\mu(\Omega^i_\tau) \geq 1-\delta$.
    Let $\tau = \inf_{i \in \Lambda} \tau_i$; since $\Lambda$ is finite, a.s. $\tau > - \infty$.
    Then, define $\Omega_\tau = \bigcap_{i\in \Lambda} \Omega^i_\tau$. By a union bound, we have a.s.
    $$\mu(\Omega_\tau^c) = \mu \left( \bigcup_{i \in \Lambda} (\Omega^i_\tau)^c\right) \leq |\Lambda| \delta.$$

    Then, using the characterization of the total variation distance between two measures as 1--(maximal coupling probability), let us show that $d_{\mathrm{TV}}(\nu|_\Lambda, \mu|_\Lambda) \leq |\Lambda| \delta$. Clearly $X^*|_\Lambda$ has law $\nu|_\Lambda$.
    Let $\xi$ be a random field of law $\mu$, independent of $\PP$.
    Since $\mu$ is invariant for the dynamics, and $\xi$ has law $\mu$, it follows that $X_0^{\xi, \tau}$ is also distributed according to $\mu$. Hence, its restriction to $\Lambda$ has law $\mu|_\Lambda$.
    Therefore, $(X^*|_\Lambda, X_0^{\xi, \tau}|_\Lambda)$ yields a coupling of distributions $\nu|_\Lambda$ and $\mu|_\Lambda$.
    Moreover, $X^*|_\Lambda$ and $X_0^{\xi, \tau}|_\Lambda$ coincide on $\Omega_\tau$, so we get
    $$d_{\mathrm{TV}}(\nu|_\Lambda, \mu|_\Lambda) \leq 1-\mu(\Omega_\tau) \leq |\Lambda| \delta.$$
    Letting $\delta \to 0$, we get that $\nu|_\Lambda = \mu|_\Lambda$.
    Then, since $\Lambda$ was arbitrary, it yields $\nu=\mu$.
\end{proof}

In the following paragraphs, we will prove that there is a.s. weak coalescence at $(i,0)$ for all $i \in \Z^d$. 
By translation-invariance and because there are countably many sites, it is enough to prove a.s. weak coalescence at $(0,0)$.

\subsection{Backward exploration of the dynamics}\label{Backward exploration of the dynamics-bis}
Recall from Section \ref{Backward exploration of the dynamics} the notions of active paths and active marks from a given $\rho \in \PP$.
We introduce a new percolation model on the update marks, which is no more independent, but rather has long-range interactions, namely a \emph{Boolean percolation}.
It is defined on a graph, and the concept of an open site is replaced by that of \emph{wet site}.
This is in accordance with the terminology used in the original article by Lamperti \cite{lamperti}, in which each site is considered to be a fountain that wets all the sites in a ball of random radius around it.

We define the model on the (random) directed graph induced by the space-time positions of the update marks: the vertex set is taken to be $\PP$, and we put an edge from $\rho'$ to $\rho$ for each pair $\rho,\rho'\in \PP$ satisfying $d(\rho,\rho')=1$. Note that the edges have the opposite direction of the active paths!
For all $\rho,\rho' \in \PP$ say that $\rho'$ \emph{wets} $\rho$ if and only if $U_{\rho'} > q_{d(\rho,\rho')}$ (with convention $q_\infty=1$). Equivalently, $\rho'$ wets all the marks at distance (strictly) less than $\mathfrak{s}_{\rho'}$ from it (recall \eqref{eq:def_s_rho}), including $\rho'$ itself as soon as $\mathfrak{s}_{\rho'}>0$.
It follows that an update mark $\rho$ can only be wetted by an active mark of $A_\rho$.
The set of \emph{wet marks} is thus defined as
\begin{equation}\label{W(P)}
    W(\PP) := \{ \rho \in \PP \mid \exists \rho' \in \A_\rho, d(\rho,\rho')< \mathfrak{s}_{\rho'} \}.
\end{equation}
The update marks belonging to $W(\PP)$ are called wet, and those of $W(\PP)^c$ are called \emph{dry}.

We now define the \emph{wet active} paths, which is a more intricate notion than the one of open active paths in Section \ref{Backward exploration of the dynamics}.
For all $(i,t) \in \Z^d \times \R_-$, say that there is a wet active path from $(0,0)$ to $(i,t)$ and write $(0,0) \xrightarrow[]{w.a.} (i,t)$ if there exists an active path from $(0,0)$ to $(i,t)$ that crosses an update mark that wets $\rho(0,0)=:\rho_0$.
In particular, a wet active path is \emph{not} an active path that only crosses wet marks. It should be seen as a path of (backward) exploration which reveals the fact that $\rho_0$ is wet.
It is sufficient for our purpose to define the wet active paths starting from $(0,0)$ only, although the definition could naturally be extended to other space-time positions.

We authorise two new values for the spin of a site, namely $-\infty$ and $+\infty$, and let $\S=\N\cup\{\pm \infty\}$. For every $t \leq 0$, define $\hat \sigma_t \in \S^{\Z^d}$ by
\begin{equation}\label{def_hat_sigma}
    \forall i \in \Z^d, \hat \sigma_t (i) = \cas{-\infty}{\si (0,0) \xrightarrow[]{w.a.} (i,t)}{d(\rho_0, \rho(i,t))}{\text{ otherwise}}.
\end{equation}
Note that if there is no active path from $(0,0)$ to $(i,t)$, the definition of the distance $d$ (in Section \ref{Backward exploration of the dynamics}) implies that $\hat \sigma_t (i) = + \infty$.
It is clear that the events of $\rho_0$ being wet and the appearance of a spin $-\infty$ in the process $(\hat \sigma_t)_{t \leq 0}$ coincide.
Moreover, since the distance to $\rho_0$ is non-decreasing along every active path starting from $(0,0)$, it follows that if $\rho_0$ is dry, then the process $(\hat \sigma_t)_{t \leq 0}$ eventually exceed any $\kappa \in \N^{\Z^d}$.

We prove an analogue of Lemma \ref{couplage_disjoint} for $(\omega^{\kappa, \tau}_t)_{t \geq \tau}$ and $(\hat \sigma_t)_{t \leq 0}$. Denote by $\leq$ the natural partial order on $\S^{\Z^d}$, defined by
$$\omega \leq \sigma \ssi \forall i \in \Z^d, \omega(i) \leq \sigma(i).$$

\begin{lemme}\label{couplage_monotone}
    Let $\kappa \in \N^{\Z^d}$. For all $\tau <0$ it holds that
    $$\{\omega_0^{\kappa, \tau}(0)=0\} \as{\subset} \{\forall t \in [\tau, 0], \omega^{\kappa, \tau}_t \leq \hat \sigma_t\}.$$
    In particular, $\{\omega_0^{\kappa, \tau}(0)=0\} \as{\subset} \{\hat \sigma_\tau \geq \kappa\}$.
\end{lemme}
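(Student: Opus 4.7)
I would mimic the backward-induction argument of Lemma~\ref{couplage_disjoint}: starting from $t=0$ and decreasing $t$ through each update mark in $\PP\cap[\tau,0]$, I would verify that the inequality $\omega^{\kappa,\tau}_t\leq\hat\sigma_t$ is preserved. The ``in particular'' statement then follows immediately since $\omega^{\kappa,\tau}_\tau=\kappa$.

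The base case at $t=0$ is more delicate than in Lemma~\ref{couplage_disjoint} because $\hat\sigma_0$ is not supported on $\{0\}$. At the origin, the hypothesis $\omega^{\kappa,\tau}_0(0)=0$ matches $\hat\sigma_0(0)=d(\rho_0,\rho_0)=0$. For $i\neq 0$, I would argue recursively: the condition $\omega^{\kappa,\tau}_0(0)=0$ forces, at the mark $\rho_0=(0,t_0,U_{\rho_0})$, that $U_{\rho_0}\leq q_0$ and $\omega^{\kappa,\tau}_{t_0-}(j)\leq 1$ for every neighbor $j$ of the origin. Unwinding one layer further, each such neighbor's value was itself produced by an update with $U$-value and neighbor-levels constrained by level-two bounds, and so on. By induction on the distance $d(\rho_0,\rho(i,0))$, this yields $\omega^{\kappa,\tau}_0(i)\leq\hat\sigma_0(i)$; along the way one also observes that $\rho_0$ is dry, so that no $-\infty$ value appears in $\hat\sigma$ and the inequality is globally meaningful.

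In the inductive step going backward through a mark $(i,s,u)\in\PP$, sites $j\neq i$ are handled immediately: neither $\omega^{\kappa,\tau}$ nor $\hat\sigma$ changes between $s$ and $s-$, so the inequality is inherited from $t=s$. For $j=i$, I would express $\omega^{\kappa,\tau}_{s-}(i)$ via the forward update rule at the previous update of site $i$, namely $\rho(i,s-)$: it is a deterministic function of $U_{\rho(i,s-)}$ and of the neighbors' spins at the time of $\rho(i,s-)$, which are controlled by the induction hypothesis. A parallel analysis of active paths from $\rho_0$ to $\rho(i,s-)$---splitting according to whether such a path passes through $(i,s,u)$ and continues down at site $i$, or reaches $i$ through a different neighbor at its final arrow---shows that $\hat\sigma_{s-}(i)$ admits an analogous expression in terms of $U_{\rho(i,s-)}$ and of $\hat\sigma$-values at the neighbors of $i$ at the time of $\rho(i,s-)$, from which one deduces $\omega^{\kappa,\tau}_{s-}(i)\leq\hat\sigma_{s-}(i)$.

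The main obstacle is precisely this last step: unlike in Lemma~\ref{couplage_disjoint}, one cannot close the backward induction at site $i$ via the clean fact $\hat\sigma_{t-}(i)=0$ that was available in the compact case. Relating the forward-evolving quantity $\omega^{\kappa,\tau}_{s-}(i)$ to the graph-distance quantity $\hat\sigma_{s-}(i)$ requires tracking the full recursive structure of the active paths together with the forward update rule across \emph{all} the levels, and this is where the additional complexity of the non-compact regime is concentrated.
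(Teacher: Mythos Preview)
Your backward-induction outline is the right shape, but you have misidentified where the work lies, and one of your claims is false.

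\textbf{Base case.} The inequality at $t=0$ is \emph{not} delicate. By definition, $\hat\sigma_0(i)$ is the length of the shortest active path from $(0,0)$ to $(i,0)$; since an active path cannot move in space without first moving strictly backward in time, there is no such path when $i\neq 0$, so $\hat\sigma_0(i)=+\infty$ for every $i\neq 0$, and $\hat\sigma_0(0)=0$. Thus the base case reduces to the hypothesis $\omega^{\kappa,\tau}_0(0)=0$. The recursive unwinding you describe (bounding $\omega^{\kappa,\tau}_{t_0-}(j)$ by $1$ for $j\sim 0$, etc.) is not needed at $t=0$; it belongs to the inductive step.

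\textbf{Inductive step.} Your assertion that ``neither $\omega^{\kappa,\tau}$ nor $\hat\sigma$ changes at $j\neq i$'' is wrong: when you process a mark $(i,s,u)$ backward, $\hat\sigma$ \emph{does} change at every neighbour $j\sim i$, namely $\hat\sigma_{s-}(j)=\min(\hat\sigma_s(i)+1,\hat\sigma_s(j))$ (this is exactly the transition $\sigma\mapsto\sigma^{i,B}[\cdot]$). Since this is a decrease, the inequality at $j\sim i$ is not inherited for free and must be checked. Conversely, site $i$ is the \emph{easy} case: just after processing the mark there is no active path from $(0,0)$ to $(i,s-)$ (any such path would require a vertical segment at $i$ straddling the mark at $(i,s)$), so $\hat\sigma_{s-}(i)=+\infty$ and the inequality at $i$ is automatic. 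So you have the roles of $i$ and $j\sim i$ reversed; the paper checks the neighbours via $\hat\sigma_s(i)+1\geq\omega^{\kappa,\tau}_s(i)+1\geq\max_{j\sim i}\omega^{\kappa,\tau}_{s-}(j)$ (forward rule: an update can decrease the spin by at most one) together with $\hat\sigma_s(j)\geq\omega^{\kappa,\tau}_s(j)=\omega^{\kappa,\tau}_{s-}(j)$.

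\textbf{Dryness of $\rho_0$.} You correctly note that one must rule out $-\infty$ values in $\hat\sigma$, but this is best done \emph{first}, not as a byproduct of the base case. The paper's argument is clean: along any active path (read forward) the spin in $\omega^{\kappa,\tau}$ can increase by at most one per arrow, so $\omega^{\kappa,\tau}_0(0)=0$ forces $\omega^{\kappa,\tau}_t(i)\leq d(\rho_0,\rho)$ at every active mark $\rho=(i,t,u)$ with $t\geq\tau$; if some such $\rho$ satisfied $u>q_{d(\rho_0,\rho)}$, the update at $\rho$ would set $\omega^{\kappa,\tau}_t(i)>d(\rho_0,\rho)$, a contradiction. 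This disposes of the wet case globally, after which the backward induction proceeds without the $-\infty$ branch.
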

\begin{proof}
    We first show that on the event that $\omega^{\kappa, \tau}_0(0)=0$, a.s. no update mark between times $\tau$ and $0$ wets $\rho_0$.
    Since the spin in the process $(\omega^{\tau,\kappa}_t)_{t \geq \tau}$ can only increase by at most 1 along the active paths (in the forward sense, an update can decrease the spin by at most 1), $\omega^{\tau, \kappa}_0(0)=0$ implies that for all $\rho=(i,t,u) \in \A_{\rho_0}$ such that $t\geq \tau$, one has $\omega^{\kappa, \tau}_t(i) \leq d(\rho_0,\rho)$.
    Arguing by contradiction, suppose that there exists $\rho=(i,t,u) \in \PP$ with $t\in[\tau,0]$ such that $d(\rho_0,\rho)<\mathfrak{s}_\rho$. The update encoded by $\rho$ leads to $\omega^{\kappa, \tau}_t(i) \geq \mathfrak{s}_\rho >d(\rho_0,\rho)$, which contradicts $\omega^{\kappa, \tau}_0(0)=0$.

    We can now prove the inequality $\omega^{\kappa, \tau}_t \leq \hat \sigma_t$ for all $t\in[\tau,0]$. The outline is the same as in the proof of Lemma \ref{couplage_disjoint}.
    For $t=0$, the property is true because $\hat \sigma_0$ a.s. equals $+\infty$ everywhere except at $0$, where we have $\hat \sigma_0(0) =0$.
    Therefore, on the event $\omega^{\kappa,\tau}_0(0) = 0 $, one has $\omega^{\kappa,\tau}_0(i) \leq \hat \sigma_0(i)$ for every $i \in \Z^d$.

    Now, we check that the property is a.s. preserved by the (backward) updates.
    Suppose that $\rho=(i,t,u) \in \PP$ for some $t \in [\tau,0]$ and that $\omega^{\kappa, \tau}_t \leq \hat \sigma_t$; let us show that $\omega^{\kappa, \tau}_{t-} \leq \hat \sigma_{t-}$.    
    First, since we have a mark at $(i,t)$, we always have $\hat \sigma_{t-}(i) = + \infty$ because there is no active path from $(0,0)$ to $(i,t)$, so $\omega^{\kappa, \tau}_{t-}(i) \leq \hat \sigma_{t-}(i)$.
    The update induced by $(i,t,u)$ can only affect the spin at $i$ in $\omega^{\kappa,\tau}_t$, so $\omega^{\kappa, \tau}_{t-}(j) =\omega^{\kappa, \tau}_t(j) $ for every $j\neq i$.
    If $j$ is not a neighbour of $i$, we also have $\hat \sigma_{t-}(j) =\hat \sigma_t(j)$, so the inequality $\omega^{\kappa, \tau}_{t-}(j) \leq \hat \sigma_{t-}(j)$ holds.
    Therefore, we only need to handle the case where $j$ is a neighbour of $i$.
    If $j \sim i$, we have 
    $$\hat \sigma_{t-}(j) = \cas{-\infty}{\si \rho \text{ wets } \rho_0}{\inf\{\hat \sigma_t(i)+1, \hat \sigma_t(j)\}}{\text{ otherwise}}.$$    
    Indeed, if $\rho$ wets $\rho_0$, it yields a wet active path from $(0,0)$ to $(j, t-)$, so $\hat \sigma_{t-}(j)=-\infty$.
    Otherwise, if $\hat \sigma_t(i)=-\infty$ or $\hat \sigma_t(j)=-\infty$, it also yields a wet active path from $(0,0)$ to $(j, t-)$, although the fact that it is wet is not due to $\rho$, so $\hat \sigma_{t-}(j)=-\infty$.
    Assume now that $\hat \sigma_t(i) \geq 0$ and $\hat \sigma_t(j) \geq 0$. The active paths, if any, from $(0,0)$ to $(j,t)$ continue to $(j,t-)$, because there is no mark at $(j,t)$. Their minimal length is given by $\hat \sigma_t(j)$ (which is $+\infty$ if there is none). On the other hands, the active paths from $(0,0)$ to $(i,t)$ and the mark $\rho$ create active paths from $(0,0)$ to $(j,t-)$, whose minimal length is therefore $\hat \sigma_t(i)+1$. Then, the value taken by $\hat \sigma_{t-}(j)$ is the minimal length of all these active paths.
    
    We have shown in the beginning of the proof that ``$\rho$ wets $\rho_0$'' and $\omega^{\kappa, \tau}_0(0)=0$ are disjoints events, so we have $\hat \sigma_{t-}(j) = \inf\{\hat \sigma_t(i)+1, \hat \sigma_t(j)\}$.
    Furthermore, according to the transition mechanism of the particle system (see Section \ref{Interacting particle system associated with the dynamics}), we have $\omega^{\kappa, \tau}_t(i) \geq \max_{j \sim i} \omega^{\kappa, \tau}_{t-}(j) -1$.
    Hence, we get $$\hat \sigma_t(i)+1 \geq \omega^{\kappa, \tau}_t(i)+1 \geq \omega^{\kappa, \tau}_{t-}(j) \et \hat \sigma_t(j) \geq \omega^{\kappa, \tau}_t(j) = \omega^{\kappa, \tau}_{t-}(j).$$
    In both statements, the first inequality comes from the ``induction hypothesis'' $\omega^{\kappa, \tau}_t \leq \hat \sigma_t$.
    It follows that $\hat \sigma_{t-}(j) = \inf\{\hat \sigma_t(i)+1, \hat \sigma_t(j)\} \geq \omega^{\kappa, \tau}_{t-}(j)$.
\end{proof}

\subsection{Duality between the particle system and the backward exploration}\label{Duality between the particle system and the backward exploration}
We now exploit the duality to get the reverse inclusion.
The Definition \ref{def:duality} of duality holds for the class of particle systems studied here. However, since the state space is no longer $\{0, 1\}^{\Z^d}$, there is no good way to see a spin configuration as a subset of $\Z^d$.
Therefore, instead of the indicator function of the event that the two configurations do not intersect, the appropriate duality function is
\begin{equation*}
    H(\omega, \sigma) = \ind{\omega \leq \sigma}.
\end{equation*}

Let $\Sigma \subset \S^{\Z^d}$ be the set of configurations with finitely many spins different from $+\infty$.
For every $\sigma \in \Sigma$, $i\in \Z^d$ and $k \in \S$, let $\sigma^{i,B}[k]$ be the configuration derived from $\sigma$ by putting spin $+\infty$ to site $i$ and the minimum between $k$ and the current spin to every neighbour of $i$, i.e.
\begin{equation}\label{transition_sigma}
    \sigma^{i,B}[k](j) = \begin{cases}
    +\infty & \text{ if } j=i\\
    \min(\sigma(j),k) & \text{ if } j\sim i\\
    \sigma(j) & \text{ otherwise}
    \end{cases}.
\end{equation}

Then, introduce a generator $\M$ whose action is defined over every function $f:\Sigma\to \R$ by
\begin{equation}\label{generator_M_bis}
    \M f(\sigma) = \sum_{i \in \Z^d} \Big[ q_{\sigma(i)} f\left(\sigma^{i,B}[\sigma(i)+1]\right) + (1 - q_{\sigma(i)}) f\left(\sigma^{i,B}[-\infty]\right) - f(\sigma) \Big]
\end{equation}
with conventions $q_{-\infty}=0$, $q_{+\infty}=1$ and $\pm \infty +1 = \pm \infty$,
and observe that the $i-$th term is just $f\left(\sigma^{i,B}[-\infty]\right) - f(\sigma)$ when $\sigma(i)=- \infty$ and $0$ when $\sigma(i)=+\infty$, because $\sigma^{i, B}[+\infty]=\sigma$, so in this last case there is not an actual transition.
Note that this expression is well defined for every $\sigma \in \Sigma$ without restriction on the function $f$, since the sum has finitely many non-zero terms.

We first give an intuitive interpretation of this generator in terms of the graphical representation.
Consider a backward process $(\sigma_t)_{t \leq 0}$ with state space $\Sigma$ and initial configuration $+\infty$ everywhere except at the origin, where we set $\sigma_0(0)=0$.
The process is left-continuous and for all $(i,t,u) \in \PP$ with $t < 0$, if $\sigma(i):=\sigma_{t+}(i)$, we update the system as follows:
\begin{itemize}
    \item if $u\leq q_{\sigma(i)}$, transition to $\sigma^{i,B}[\sigma(i)+1]$ 
    \item if $u>q_{\sigma(i)}$, transition to $\sigma^{i,B}[-\infty]$.
\end{itemize}
It should be clear that the generator of $(\sigma_t)_{t \leq 0}$ is $\M$.

\begin{rem}
    The action of $\M$ is only defined on functions of $\Sigma$. We justify here why $\sigma_t$ stays in $\Sigma$ for all $t<0$. This is the same argument of Harris, already mentioned in the end of Section \ref{Glauber dynamics}, used to construct the graphical representation of the Glauber dynamics. Fix $t_0<0$ and consider the following random subgraph of $\Z^d$: draw an edge between $i$ and $j$ if and only if there is an update mark carried by $i$ or $j$ between times $0$ and $t_0$. Harris proved that for $|t_0|$ small enough, this subgraph $\P-$a.s. only contains finite connected components. A site can have a spin different from $+\infty$ in $\sigma_{t_0}$ only if it is in the same connected component as a site which has a spin different from $+\infty$ in $\sigma_0$; since we start with a configuration $\sigma_0 \in \Sigma$ and all the connected components are finite, it follows that at time $t_0$ only finitely many sites have a spin different from $+\infty$. We then deduce recursively that $\sigma_t \in \Sigma$ for all $t$. 
\end{rem}

The possible values of a spin in $\sigma_t$ represent, as in Section \ref{Dual process}, what is needed at time $t$ in the dynamics to get almost sure coalescence at $(0,0)$ from a certain negative time.
If $\sigma_t(i) = +\infty$ for some $t\leq 0$ and $i\in \Z^d$, it means that no information on the value of the field at $(i,t)$ is needed.
If $\sigma_t(i)=k$ for some $k \geq 1$, it means that we need to know in a measurable way from the update marks that the value of the field in the dynamics belongs to $S_k$; $\sigma_t(i)=0$ means that we also need that the value of the field is in $S_1$, and moreover that it does not depend on the initial configuration of the dynamics.
Finally, $\sigma_t(i) = -\infty$ means that we cannot have almost sure coalescence at $(0,0)$.
Transition rules are then understandable in the following way.
Consider an update mark $(i,t,u)$ with $t<0$ and $\sigma:=\sigma_{t+}$ the configuration before the update. In order to get the level we want for the dynamics at $(i,t)$, encoded by $\sigma(i)$, we need to know that the field values on the neighbourhood of $i$ at time $t-$ belong to $S_{\sigma(i) +1}$.
Furthermore, we need a ``good'' update mark, that is here $u \leq q_{\sigma(i)}$, so that the update function $\varphi$ returns a value in $S_{\sigma(i)}$ (see \eqref{maximal_coupling} and \eqref{coupling_q_L}).
Thus, if $u \leq q_{\sigma(i)}$, jump from configuration $\sigma$ to $\sigma^{i,B}[\sigma(i)+1]$: 
the neighbours of $i$ take spin $\sigma(i)+1$ or keep their spins if it is smaller, because a smaller spin means that a more restrictive level is already required.
If $u>q_{\sigma(i)}$, the update mark is not good enough to reach the target level.
In this case, the coalescence cannot be achieved almost surely at $(0,0)$, so spins $-\infty$ invade the process (jump to $\sigma^{i,B}[-\infty]$).

We now formally prove the duality.

\begin{lemme}\label{prop_duality_bis}
The particle systems generated by $\L$ and $\M$ are dual with respect to $H(\omega, \sigma) = \ind{\omega \leq \sigma}$.
\end{lemme}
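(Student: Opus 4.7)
The plan mirrors the proof of Lemma \ref{prop_duality}. By Theorem 3.42 in \cite{liggett2010continuous}, it suffices to verify the generator identity $\L H(\cdot, \sigma)(\omega) = \M H(\omega, \cdot)(\sigma)$ for every $\omega \in \S^{\Z^d}$ and $\sigma \in \Sigma$. The function $H(\cdot, \sigma)$ is local in $\omega$ because $\sigma(j) = +\infty$ off a finite set and the constraint $\omega(j) \leq +\infty$ is vacuous, so $\L$ genuinely acts on it; and $\M$ is defined on every function on $\Sigma$. Since both generators split as sums of site-local terms, the task reduces to establishing, for each $i \in \Z^d$,
\begin{equation*}
\sum_{k \in \S} c(k, i, \omega)\bigl[H(\omega^i[k], \sigma) - H(\omega, \sigma)\bigr] = q_{\sigma(i)}\bigl[H(\omega, \sigma^{i,B}[\sigma(i)+1]) - H(\omega, \sigma)\bigr] + (1 - q_{\sigma(i)})\bigl[H(\omega, \sigma^{i,B}[-\infty]) - H(\omega, \sigma)\bigr].
\end{equation*}

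Two simplifications trim this to a pure rate identity. First, $H(\omega, \sigma^{i,B}[-\infty]) = 0$: the configuration $\sigma^{i,B}[-\infty]$ equals $-\infty$ at every $j \sim i$, whereas $\omega(j) \in \N \cup \{+\infty\}$, so $\omega \leq \sigma^{i,B}[-\infty]$ must fail at every neighbour. Combined with $\sum_{k \in \S} c(k, i, \omega) = 1$, the identity becomes $\sum_{k} c(k, i, \omega)\, H(\omega^i[k], \sigma) = q_{\sigma(i)}\, H(\omega, \sigma^{i,B}[\sigma(i)+1])$. Second, if $\omega(j) > \sigma(j)$ for some $j \neq i$, both sides vanish: the left because every indicator $\ind{\omega^i[k] \leq \sigma}$ fails at $j$; the right because $\sigma^{i,B}[\sigma(i)+1](j) \leq \sigma(j)$ (equal to $\sigma(j)$ when $j \not\sim i$, and to $\min(\sigma(j), \sigma(i)+1) \leq \sigma(j)$ when $j \sim i$). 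We may therefore assume $\omega|_{\Z^d \setminus \{i\}} \leq \sigma|_{\Z^d \setminus \{i\}}$, and setting $m := \max_{j \sim i} \omega(j)$, the identity collapses to
\begin{equation*}
\sum_{k \leq \sigma(i)} c(k, i, \omega) = q_{\sigma(i)}\, \ind{m \leq \sigma(i) + 1},
\end{equation*}
since the constraint at $i$ on the right is vacuous ($\sigma^{i,B}[\sigma(i)+1](i) = +\infty$), while the constraints at neighbours reduce to $m \leq \sigma(i)+1$.

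This last identity is a short case analysis from the definition \eqref{rate_omega}. When $m = +\infty$, the only positive rate is $c(+\infty, i, \omega) = 1$, and both sides equal $\ind{\sigma(i) = +\infty}$ (with $q_{+\infty} = 1$). When $m \in \N$, the nonzero rates $c(0, i, \omega) = q_0$ (if $m \leq 1$), $c(m-1, i, \omega) = q_{m-1}$ (if $m \geq 1$, coinciding with the previous one when $m=1$), and $c(k, i, \omega) = q_k - q_{k-1}$ for $k \geq m$ telescope to give $\sum_{k \leq n} c(k, i, \omega) = q_n$ when $n \geq m-1$ and $0$ otherwise, which matches $q_n\, \ind{m \leq n+1}$; the conventions $q_{-\infty} = 0$ and $q_{+\infty} = 1$ take care of $\sigma(i) \in \{-\infty, +\infty\}$. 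The only real difficulty in the proof is managing the several branches of the rate function; conceptually, everything follows once the two simplifications above are in place.
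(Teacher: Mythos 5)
Your proof is correct, and it streamlines the paper's argument in a way worth noting. Both proofs invoke Theorem 3.42 of Liggett and reduce to a per-site identity, and both dispense with the case where $\omega(j) > \sigma(j)$ for some $j \neq i$. The divergence comes next: the paper carries the term $H(\omega,\sigma)$ through the computation and splits on $\omega(i) \leq \sigma(i)$ versus $\omega(i) > \sigma(i)$ (computing the increments $A_k$, $B_k$ explicitly in each subcase); you instead observe that $H(\omega, \sigma^{i,B}[-\infty]) = 0$ identically (since $\sigma^{i,B}[-\infty]$ is $-\infty$ on the nonempty set $i+B$ while $\omega$ never is) and combine this with $\sum_k c(k,i,\omega) = 1$ to cancel all $H(\omega,\sigma)$ terms. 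This collapses the identity to the $\omega(i)$-free rate equation $\sum_{k\le\sigma(i)} c(k,i,\omega) = q_{\sigma(i)}\ind{m \le \sigma(i)+1}$, so the case split on $\omega(i)$ disappears entirely and only the split on $m$ remains. The telescoping computation at the end is the same in both proofs. What your route buys is a clearer separation of the trivial bookkeeping (the two structural observations) from the one genuinely combinatorial fact (the telescoping of the rates), and makes it transparent that the identity to be checked depends on $\omega$ only through $m$.
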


\begin{proof}
The proof is very similar to the one of Lemma \ref{prop_duality}, and relies on the same result about generators (Theorem 3.42 in \cite{liggett2010continuous}).
In particular, we only prove that $$\L H(\cdot,\sigma)(\omega)=\M H(\omega, \cdot)(\sigma)$$ for all $\omega \in \N^{\Z^d}$, $\sigma \in \Sigma$, where the left-hand side is well-defined because $\sigma$ takes spin $+\infty$ almost everywhere, so $H(\cdot, \sigma)$ is a local function.

Fix $i \in \Z^d$, $\omega \in \N^{\Z^d}$, $\sigma \in \Sigma$, and let $m=\max_{j \sim i} \omega(j)$.
For all $k \in \N$,
set $A_k = H(\omega^i[k], \sigma) - H(\omega, \sigma)$ and $B_k = H(\omega, \sigma^{i,B}[k]) - H(\omega, \sigma)$, and similarly set $B_{\pm\infty}=H(\omega, \sigma^{i,B}[\pm\infty]) - H(\omega, \sigma)$. Note that one always has $B_{+\infty}=0$.
It is sufficient to show that
\begin{equation}
    \label{duality_generators}
    \sum_{k \in \N} c(k, i, \omega) A_k= q_{\sigma(i)} B_{\sigma(i)+1} + (1-q_{\sigma(i)}) B_{-\infty},
\end{equation}
where $c(k, i, \omega)$ is given by \eqref{rate_omega}, since then we can sum over $i \in \Z^d$ to get the result.

First, note that if there is some $j \neq i$ such that $\sigma(j) < \omega(j)$, then for every $k \in \N$, $A_k = B_k=B_{-\infty}=0$.
Indeed, in this case the condition $\omega \leq  \sigma$
fails at site $j$ so $H(\omega, \sigma) = 0$. Then, $\omega^i[k](j)=\omega(j) > \sigma(j)$ so we also have $H(\omega^i[k], \sigma) = 0$, thus $A_k=0$.
The spin at $j$ may differ between $\sigma$ and $\sigma^{i,B}[k]$, but $\sigma^{i,B}[k](j) \leq \sigma(j) < \omega(j)$ because $j\neq i$, so we still have $H(\omega, \sigma^{i,B}[k])=0$, thus $B_k=0$ (this sentence is also valid for $k=-\infty$).
In this case, \eqref{duality_generators} holds obviously.

Consequently, in the rest of the proof, we handle the case:
\begin{equation*}
    \forall j \neq i, \ \omega(j) \leq \sigma(j).
\end{equation*}
Under this assumption, one can compute $A_k$ and $B_k$ for all $k \in \N$ (including $k=-\infty$ for $B_k$):
\begin{itemize}
    \item $A_k = \cas{1}{\si  k\leq \sigma(i) \et \omega(i)>\sigma(i)}{-1}{\si k> \sigma(i) \et \omega(i) \leq \sigma(i) }$;
    \item $B_k=\cas{1}{\si m \leq k \et  \omega(i) > \sigma(i)}{-1}{\si m > k \et \omega(i) \leq \sigma(i)} \hspace{1 cm} \et \hspace{1 cm} B_{-\infty}=-\ind{\omega(i) \leq \sigma(i)}.$
\end{itemize}
Now, verify the equality in \eqref{duality_generators}, considering separately the cases
$\omega(i) \leq \sigma(i)$ and $\omega(i) > \sigma(i)$.
We will refer to the left-hand side of \eqref{duality_generators} by the generic symbol $L$ and to the right-hand side by $M$.

\begin{enumerate} 
    \item $\boldsymbol{\omega(i) \leq \sigma(i):}$ In this case, we have for all $k\in \S$, $A_k = - \ind{k> \sigma(i)} $ and $B_k=-\ind{m>k}$, and $B_{-\infty}=-1$.
    When $\sigma(i) = +\infty$, one has $M= B_{+\infty}=0$, and $L= 0$ because all the $A_k$ are null.    
    When $\sigma(i)< + \infty$, one has $-M= -q_{\sigma(i)} B_{\sigma(i)+1} + 1-q_{\sigma(i)} $, so
    $$-M=\cas{1-q_{\sigma(i)}}{\si m\leq \sigma(i)+1}{1}{\si m>\sigma(i)+1}.$$
    On the other hand, $$-L= \sum_{k= \sigma(i)+1}^{+\infty} c(k, i, \omega).$$
    Now, $c(k, i, \omega) = 0$ if $k<m-1$, so if $m\leq \sigma(i)+1$, the sum starts at $\sigma(i)+1$, but if $m> \sigma(i)+1$, one can start the sum at $m-1$.
    In the first case, we have $-L= q_{\sigma(i)+1} - q_{\sigma(i)} + q_{\sigma(i)+2} - q_{\sigma(i)+1} + \dots = 1-q_{\sigma(i)}$
    and in the second case we have
    $-L= q_m + q_{m+1} - q_m + q_{m+2} - q_{m+1}+ \dots = 1$.
    Hence $L=M$.

    \item $\boldsymbol{\omega(i) > \sigma(i):}$
    Here, $A_k=\ind{k \leq \sigma(i)}$, $B_k = \ind{m \leq k}$ and $B_{-\infty}=0$, and note that the case $\sigma(i) = + \infty$ is excluded.
    Thus, $M=q_{\sigma(i)} \ind{m \leq \sigma(i)+1}$.
    Then,
    $$L= \sum_{k=0}^{\sigma(i)} c(k, i, \omega) = \sum_{k=m-1}^{\sigma(i)} c(k, i, \omega) = \cas{0}{\si m>\sigma(i)+1}{q_{\sigma(i)}}{\si m \leq \sigma(i) +1},$$
    coinciding with $M$, where the $q_{\sigma(i)}$ again arises from a telescopic sum.\qedhere
\end{enumerate} \end{proof}

We can now deduce from the previous lemma the duality between the particle systems introduced in Section \ref{Interacting particle system associated with the dynamics} and the backward exploration.

\begin{prop}
    For all $\tau \in \R$ and $\kappa \in \N^{\Z^d}$, the processes $(\omega^{\kappa, \tau}_t)_{t \geq \tau}$ and $(\hat \sigma_{\tau-t})_{t \geq \tau}$ are dual with respect to $H(\omega, \sigma) = \ind{\omega \leq \sigma}$.
\end{prop}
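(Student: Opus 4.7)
The plan is to reproduce the argument of Proposition \ref{prop:duality_spin_system_backward_exploration} in this more elaborate setting. Given Lemma \ref{prop_duality_bis}, Theorem 3.42 of \cite{liggett2010continuous} reduces the claim to identifying $\M$ as the infinitesimal generator of $(\hat\sigma_{\tau-t})_{t \geq \tau}$. Since the process is already constructed via the graphical representation, the task boils down to a case analysis of the transition induced by each update mark $\rho = (i, s, u) \in \PP$ with $\tau \leq s \leq 0$.

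Fix such a mark and set $\sigma := \hat\sigma_s$. Only the spins at $i$ and at its neighbours can be affected. If $\sigma(i) = +\infty$, no active path from $\rho_0$ reaches $(i, s)$, so the arrows from $\rho$ extend nothing and no spin changes; this matches the vanishing $i$-th term of $\M$, since $\sigma^{i, B}[+\infty] = \sigma$. If $\sigma(i) = -\infty$, then $\rho$ propagates a wet active path to every neighbour, giving $\hat\sigma_{s-}(j) = -\infty$ for all $j \sim i$; this is the transition $\sigma \to \sigma^{i, B}[-\infty]$ fired deterministically at rate $1 - q_{-\infty} = 1$. If $\sigma(i) = k \in \N$, a minimal active path from $\rho_0$ to $(i, s)$ has length $k$, and concatenating it with an arrow of $\rho$ yields a candidate active path of length $k + 1$ from $(0, 0)$ to each $(j, s-)$. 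This candidate wets $\rho_0$ iff $U_\rho > q_k$, an event of probability $1 - q_k$, in which case $\hat\sigma_{s-}(j) = -\infty$ for every $j \sim i$, matching $\sigma \to \sigma^{i, B}[-\infty]$. Otherwise $U_\rho \leq q_k$, with probability $q_k$, and the dry candidate's length competes with the existing $\sigma(j)$ to give $\hat\sigma_{s-}(j) = \min(\sigma(j), k + 1) = \sigma^{i, B}[k + 1](j)$, matching $\sigma \to \sigma^{i, B}[k + 1]$. Combined with the PPP intensity $1$ per site and the uniformity of $U_\rho$, these instantaneous rates precisely reproduce the generator $\M$ of \eqref{generator_M_bis}.

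The main subtle point is the treatment of the spin at site $i$ itself after $\rho$ is consumed: the rule $\sigma^{i, B}[\cdot]$ resets $\hat\sigma_{s-}(i)$ to $+\infty$, which must be reconciled with the fact that, by definition, $\hat\sigma_{s-}(i) = d(\rho_0, \rho(i, s-))$ where $\rho(i, s-)$ is the previous mark at $i$, strictly in the past of $s$. At the infinitesimal level this earlier mark has not yet been revealed in process-time and contributes nothing to the transition; its influence will be correctly incorporated only when it is subsequently processed, at which point the spin at $i$ may again transition. Granted this identification of $\M$ as the generator of $(\hat\sigma_{\tau - t})_{t \geq \tau}$, Theorem 3.42 of \cite{liggett2010continuous} together with Lemma \ref{prop_duality_bis} yields the duality relation.
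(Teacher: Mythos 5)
Your proof follows the paper's approach line for line: Lemma~\ref{prop_duality_bis} together with Theorem~3.42 of Liggett reduce the claim to identifying $\M$ as the generator of $(\hat\sigma_{\tau-t})_{t\geq\tau}$, and you check this by reading off the transition induced by each update mark, with a case analysis on $\sigma(i)\in\{-\infty\}\cup\N\cup\{+\infty\}$ that correctly recovers the rates $q_{\sigma(i)}$ and $1-q_{\sigma(i)}$ for the two jumps $\sigma\to\sigma^{i,B}[\sigma(i)+1]$ and $\sigma\to\sigma^{i,B}[-\infty]$.

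Your treatment of the one genuinely delicate step, namely that $\hat\sigma_{s-}(i)=+\infty$, is however a heuristic and not a proof. The quantity $d(\rho_0,\rho(i,s-))$ is a deterministic functional of the whole Poisson point process; the phrase ``has not yet been revealed in process-time'' refers to nothing in the definition of the distance $d$, and by itself it does not exclude a finite value. Indeed, taken entirely literally, the formula leaves open the possibility of an active path from $\rho_0$ that enters site $i$ via an arrow from a neighbouring mark at some time strictly between the time of $\rho(i,s-)$ and $s$, and then sits at $i$ down to $\rho(i,s-)$; nothing in ``revelation'' prevents this. The correct reason, which the paper draws from the proof of Lemma~\ref{couplage_monotone}, is local and almost-sure: any active path from $(0,0)$ occupying $(i,s-\epsilon)$ must have entered site $i$ at some time $t_n\in(s-\epsilon,s)$ (entry at $t_n>s$ is blocked because the mark at $(i,s)$ forces the path to jump away from $i$; $t_n=s$ would require a simultaneous mark at a neighbour, which almost surely does not occur), so it requires an update mark at a neighbour of $i$ inside the shrinking window $(s-\epsilon,s)$, and the PPP almost surely contains no such mark once $\epsilon$ is small enough; hence the left limit at $i$ is $+\infty$. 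You should either supply this observation in place of the revelation argument or, as the paper does, explicitly invoke the corresponding step of Lemma~\ref{couplage_monotone}.
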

\begin{proof}
    Fix $\tau \in \R$. According to Lemma \ref{prop_duality_bis}, one only has to verify that the generator of $(\hat \sigma_{\tau-t})_{t \geq \tau}$ is $\M$. Recall that this process has been defined through the graphical representation, and note that it is left-continuous.
    For some $t\geq \tau$, suppose that there is an update mark $\rho=(i, \tau-t, u) \in \PP$. It affects the process as follows:
    $$\hat \sigma_{(\tau-t)-}(j)= \begin{cases}
    +\infty & \text{ if } j=i\\
    \min(\hat \sigma_{\tau-t}(j),k) & \text{ if } j\sim i\\
    \hat \sigma_{\tau-t}(j) & \text{ otherwise}
    \end{cases} \text{ i.e. } \hat \sigma_{(\tau-t)-}=\hat \sigma_{\tau-t}^{i,B}[k],$$
    where $k=-\infty$ if $u > q_{d(\rho_0,\rho)}$ (i.e. if $\rho$ wets $\rho_0$) and $\hat \sigma_{\tau-t}(i)+1$ otherwise.
    This is a consequence of \eqref{def_hat_sigma}, already observed in the proof of Lemma \ref{couplage_monotone}.
    The value of $\hat \sigma_{\tau-t}(i)$ can either be $d(\rho_0, \rho)$ or $-\infty$.
    If it is $-\infty$, one has $\hat \sigma_{(\tau-t)-}= \hat \sigma_{\tau-t}^{i, B}[-\infty]$.
    Otherwise, the condition $u>q_{d(\rho_0,\rho)}$ is equivalent to $u>q_{\hat \sigma_{\tau-t}(i)}$, and in this case one has $\hat \sigma_{(\tau-t)-}= \hat \sigma_{\tau-t}^{i, B}[-\infty]$; if $u \leq q_{\hat \sigma_{\tau-t}(i)}$, one has $\hat \sigma_{(\tau-t)-}= \hat \sigma_{\tau-t}^{i, B}[\hat \sigma_{\tau-t}(i)+1]$.
    
    Therefore, for every $i \in \Z^d$, transitions of the form $\sigma$ to $\sigma^{i, B}[-\infty]$ appear at rate $1-q_{\sigma(i)}$, and transitions of the form $\sigma$ to $\sigma^{i, B}[\sigma(i)+1]$ appear at rate $q_{\sigma(i)}$. These are the rates prescribed by $\M$.
\end{proof}

The duality relation \eqref{def_duality} (with function $H(\omega, \sigma) = \ind{\omega \leq \sigma}$) applied at time $t=0$ for processes $(\omega_t^{\kappa, \tau})_{t \geq \tau}$ and $(\hat \sigma_{\tau-t})_{t \geq \tau}$ gives
$$\P(\omega_0^{\kappa, \tau} (0) = 0) = \P(\kappa \leq \hat \sigma_\tau),$$
because $\omega^{\kappa, \tau}_\tau=\kappa$ and $\hat \sigma_0$ has spin $+\infty$ everywhere except at the origin, where it is 0.
Coupled with the inclusion obtained in Lemma \ref{couplage_monotone}, it gives
\begin{equation}\label{eq:duality_bilan_bis}
    \{\omega^{\kappa, \tau}_0(0)=0\}\as{=}\{\hat \sigma_\tau \geq \kappa\}.
\end{equation}

Then, if $\rho_0$ is dry, for every $\kappa \in \N^{\Z^d}$ one can choose $\tau<0$ such that $\hat \sigma_\tau \geq \kappa$. By \eqref{eq:duality_bilan_bis} and Lemma \ref{lemma:coalescence_when_omega=0}, it follows that we have coalescence at $(0,0)$ of all the processes $(X^{\xi, \tau}_t)_{t \geq \tau}$ with initial condition $\xi\in \Omega_\kappa$.
By ``letting $\kappa$ go to infinity'', we will then obtain a.s. weak coalescence at $(0,0)$ conditionally on the event that $\rho_0$ is dry.
We then explain how to proceed when $\rho_0$ is wet.

\subsection{Proof of Theorem \ref{thm_general}}

In this Section, we conclude the proof of Theorem \ref{thm_general}. We start by two lemmas, the first one formalising the discussion at the end of the previous paragraph, that claims that everything goes well for dry marks, and the second one stating that wet marks do not percolate, which turns out to be sufficient to get a coding for $\mu$. Note that hypotheses \eqref{link_L0_p0} and \eqref{link_Ln_pn} have been used to define the update function of the dynamics. The two others hypotheses, \eqref{condition_qn} and \eqref{condition_Ln}, will be used, respectively, in the proofs of Lemma \ref{cutset} and Lemma \ref{lemma:X_rho_dry_marks} below.

\begin{lemme}\label{lemma:X_rho_dry_marks}
    For every $\rho=(i,t,u) \in \PP$, one has
    $$\{\rho \text{ is dry} \} \as \subset\{\forall \tau <t, \ X^{\mathbf{0},\tau}_t(i)=\varphi(\mathbf{0}, u) \}$$ and
    $$\{\rho \text{ is dry} \} \as{\subset} \left\{\mu\left(\{\xi \in \Omega :  X_t^{\xi, \tau}(i)=\varphi(\mathbf{0},u)\right) \xrightarrow[\tau \to -\infty]{}1 \right\}.$$
\end{lemme}
\begin{proof}
    It is sufficient to prove the statements for $\rho_0=\rho(0,0)$, by space and time translation-invariance.
    Consider the process $(\hat \sigma_t)_{t \leq 0}$ introduced in Section \ref{Backward exploration of the dynamics-bis}, and let $t_0$ be the time of the update mark $\rho_0$, i.e. the only negative number $t_0$ such that $\rho_0=(0,t_0,U_{\rho_0})$.
    On the event of $\rho_0$ being dry, by definition of $(\hat \sigma_t)_{t \leq 0}$, one has $\hat \sigma_t \geq 1$ for all $t < t_0$ (the update encoded by $\rho_0$ gives spin $1$ to the neighbours of the origin, and spin $+\infty$ everywhere else; in the rest of the evolution of the process, the spin at any site cannot take a value smaller than 1, because $\rho_0$ is dry).
    The duality proved in Section \ref{Duality between the particle system and the backward exploration} then implies (see \eqref{eq:duality_bilan_bis}) that $\omega^{\kappa, \tau}_0(0)=0$ for all $\tau < t_0$, with $\kappa \equiv1$. By Lemma \ref{lemma:coalescence_when_omega=0}, we get in particular that a.s.
    $$\forall \tau <t_0, \ X_0^{\mathbf{0}, \tau}(0)= \varphi(\mathbf{0}, U_{\rho_0}),$$
    because $\mathbf{0} \in \Omega_\kappa$.
    We now prove the second statement.
    Define a sequence $(\kappa_n)_{n \geq 1}$ of spin configurations by $\kappa_n(i):=\max(n, \Vert i\Vert_1)$ for all $i \in \Z^d$ and $n \geq 1$.
    On the event where $\rho_0$ is dry, it holds that the process $(\hat \sigma_t)_{t \leq 0}$ eventually exceed every $\kappa \in \N^{\Z^d}$, since there is never the appearance of a spin $-\infty$. It follows that for every $n \geq 1$, there exists $\tau_n>-\infty$ such that $\hat \sigma_{\tau_n} \geq \kappa_n$. Therefore, as a consequence of the duality, we get that $\omega^{\kappa_n, \tau_n}_0(0)=0$, see \eqref{eq:duality_bilan_bis}.
    By Lemma \ref{lemma:coalescence_when_omega=0}, it then follows that for every $\xi \in \Omega_{\kappa_n}$, we have a.s.
    $$X^{\xi, \tau_n}_0(0)=\varphi(\mathbf{0}, U_{\rho_0}).$$
    Thus, we get that for every $\tau \leq \tau_n$, $$\Omega_{\kappa_n} \as\subset\Omega^0_\tau := \{\xi \in \Omega : X_0^{\xi, \tau}(0) = \varphi(\mathbf{0},U_{\rho_0})\}.$$
By hypothesis \eqref{condition_Ln} of Theorem \ref{thm_general}, one has $\mu(\Omega_{\kappa_n}) \xrightarrow[n \to +\infty]{}1$, so it gives $\mu(\Omega_\tau^0) \xrightarrow[\tau \to -\infty]{}1$. 
\end{proof}

In words, Lemma \ref{lemma:X_rho_dry_marks} says that if $\rho_0$ is dry, then we have weak coalescence at $(0,0)$, and it identify the coalescing value as $\varphi(\mathbf{0}, U_{\rho_0})$ (note that this is clearly measurable of the update marks). Unfortunately, it is not true that $\rho_0$ is almost surely dry, so it is not sufficient to apply Proposition \ref{prop:strong_CFTP} and get a coding for $\mu$,.

To deal with the case of $\rho_0$ being wet, we prove that the Boolean percolation on the update marks is in some sense subcritical, and will see in the proof of Theorem \ref{thm_general} that this is sufficient for our purpose.
A finite subset of $\A_{\rho_0}$ is said to be a \emph{cutset} (for $(0,0)$) if every infinite active path starting from $(0,0)$ passes through a mark of the subset (e.g. the smallest cutset for $(0,0)$ is $\{\rho_0\}$).
Let $\mathcal{T}:=\sigma((T^i_n)_{i \in \Z^d, n \in \Z})$, that is the $\sigma-$algebra generated by the space-time positions of the update marks. 
\begin{lemme}\label{cutset}
    There almost surely exists a dry cutset for $(0,0)$, that is a cutset included in $W(\PP)^c$.

    Moreover, there exists $\lambda>0$ such that for all $n\geq 0$,
    $$\P(\exists \text{ a wet path of length $n$ starting from }(0,0) \mid \mathcal{T}) \leq e^{-\lambda n}.$$
\end{lemme}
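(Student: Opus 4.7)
The plan is to prove the exponential estimate first, and then deduce the existence of a dry cutset. A wet path of length $n$ from $(0,0)$ means an active path $\rho_0, \rho_1, \dots, \rho_n$ with $\rho_0 = \rho(0,0)$ all of whose marks belong to $W(\PP)$.

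Conditional on $\mathcal{T}$, the $(U_\rho)_{\rho \in \PP}$ are i.i.d.\ uniform on $[0,1]$. A union bound over potential witnesses together with the fact that at most $|B|^k$ marks lie at active-path distance $k$ from a given mark $\rho$ gives
\[
\P(\rho \in W(\PP) \mid \mathcal T) \leq \sum_{\rho' \in \A_\rho}(1 - q_{d(\rho,\rho')}) \leq \sum_{k \geq 0} |B|^k(1-q_k) =: p,
\]
and hypothesis~\eqref{condition_qn} guarantees both $p < +\infty$ and $|B|p \leq \sum_k |B|^{2k+1}(1-q_k) < 1/4$.

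For the first-moment estimate, fix an active path $\rho_0, \dots, \rho_n$. Bounding $\mathbf 1_{\rho_i \in W(\PP)} \leq \sum_{\rho'_i \in \A_{\rho_i}} \mathbf 1_{U_{\rho'_i} > q_{d(\rho_i, \rho'_i)}}$, expanding the product in $i$, and taking expectations gives
\[
\P(\text{all } \rho_i \text{ wet} \mid \mathcal T) \leq \sum_{(\rho'_i)_{i=0}^n} \P\!\left( \bigcap_i \{U_{\rho'_i} > q_{d(\rho_i, \rho'_i)}\} \,\Big|\, \mathcal T \right).
\]
Tuples with pairwise distinct $\rho'_i$ contribute, by independence, $\prod_i(1 - q_{d(\rho_i,\rho'_i)})$, whose total sum is bounded by $p^{n+1}$. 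The main obstacle is the tuples where several witnesses coincide at a single mark $\rho$: their per-tuple probability is $1 - q_{\max_i d(\rho_i, \rho)}$ rather than the product, which is strictly larger. I plan to group such tuples by their coincidence pattern (a set partition of $\{0,\dots,n\}$), bound each block's contribution by counting the possible positions of its common witness $\rho$ weighted by $|B|^{d(\rho_j, \rho)}(1 - q_{d(\rho_j, \rho)})$ at the deepest index $j$ of the block, and use the \emph{quadratic} weight $|B|^{2k+1}$ (rather than the naive $|B|^{k+1}$) in hypothesis~\eqref{condition_qn}, with the constant $4$ providing the required slack, to absorb the extra combinatorial factor arising from all coincidence patterns on $n+1$ marks. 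The net estimate is $\P(\text{all wet} \mid \mathcal T) \lesssim p^{n+1}$; since there are at most $|B|^n$ active paths of length $n$ from $\rho_0$, this yields
\[
\P(\exists\text{ wet path of length } n \mid \mathcal T) \lesssim (|B|p)^n \leq 4^{-n},
\]
i.e.\ the claim with $\lambda = \log 4$.

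For the existence of a dry cutset, consider the tree whose nodes are the finite active paths from $\rho_0$: it is $|B|$-branching and hence locally finite. On the event $\{\nexists \text{ infinite wet path from } \rho_0\}$, which has full probability by the Borel--Cantelli lemma applied to the exponential bound, its subtree of wet paths has no infinite branch. König's lemma then shows this wet subtree is finite, so its outer boundary in the full tree (together with $\{\rho_0\}$ if $\rho_0$ itself is dry) is a finite set of dry marks through which every infinite active path from $\rho_0$ must pass, i.e.\ a dry cutset.
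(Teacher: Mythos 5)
The paper's proof of Lemma~\ref{cutset} is a one-line citation: it views $\A_{\rho_0}$, conditionally on $\mathcal{T}$, as a directed graph with $\Delta_+=\Delta_-=|B|$ and invokes Theorem~2.4 of \cite{Faipeur} (reproduced just before the proof), whose hypotheses specialise \emph{exactly} to \eqref{condition_qn}; both the a.s.~finiteness of $W_{\rho_0}$ (hence the dry cutset) and the exponential tail of $|W_{\rho_0}|$ (hence the wet-path bound) come out directly. Your proposal tries instead to re-derive the exponential estimate from scratch, which is a genuinely different route. Several of your steps are fine and close in spirit to standard arguments: the union bound $\P(\rho\in W(\PP)\mid\mathcal{T})\le\sum_{k\ge0}|B|^k(1-q_k)=:p$, the observation that \eqref{condition_qn} forces $|B|p<1/4$, and the K\"onig-lemma deduction of a finite dry cutset from the absence of infinite wet paths are all correct (the last is morally the same as deducing the cutset from finiteness of $W_{\rho_0}$).

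The genuine gap is in the first-moment bound $\P(\text{all }\rho_i\text{ wet}\mid\mathcal T)\lesssim p^{n+1}$, which is left as a plan rather than a proof at precisely the point where \eqref{condition_qn} is supposed to do its work. Grouping witness tuples by their coincidence pattern introduces a sum over set partitions of $\{0,\dots,n\}$; there are $B_{n+1}$ of these (the Bell number), which grows super-exponentially, so no fixed constant such as the $4$ in \eqref{condition_qn} can absorb it by itself. Moreover your proposed per-block weight uses the distance $d(\rho_j,\rho)$ at the \emph{deepest} index $j=\max b$, i.e.\ the \emph{smallest} distance from the block to the shared witness, whereas the actual per-block probability is $1-q_{\max_{i\in b}d(\rho_i,\rho)}$, governed by the \emph{largest} such distance (roughly the diameter of the block along the path). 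If you only use the deepest index, each block contributes at most $p$, you land on the Touchard polynomial $\sum_P p^{|P|}$, whose $|P|=1$ term is just $p$ and does not decay in $n$. The argument can only close if one exploits that coincident witnesses pay a steeply larger price (via $1-q_{\max_i d(\rho_i,\rho)}$), and it is exactly this bookkeeping that the quadratic weight $|B|^{2k+1}$ in \eqref{condition_qn} is calibrated for --- and that Theorem~2.4 of \cite{Faipeur} carries out. As written, the key quantitative step is missing, so the proof is incomplete.
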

In this statement, a wet path should be understood as an active path that only crosses wet marks. Equivalently, a wet path of length $n$ starting from $(0,0)$ is made of $n$ consecutive update marks that are all wet, the first being $\rho_0$.

This result is a direct consequence of Theorem 2.4 in \cite{Faipeur}, that we reproduce here.
\begin{thm}[Theorem 2.4 in \cite{Faipeur}]
    Let $G$ be an infinite directed graph of bounded degree. Let $\Delta_+$ (resp. $\Delta_-$) be the maximal out-degree (resp. in-degree) of a vertex, that is the number of edges emanating from (resp. pointing at) this vertex, and assume that they both are at least 2. We consider the Boolean percolation on $G$ with radii distributed according to $(q_n)_{n \geq0}$, that is, we consider the wet set 
    $$W=\bigcup_{v \in G} B_{R_v}(v),$$
    where $B_r(v):=\{v' \in G : d(v,v')<r\}$ and the $(R_v)_{v \in G}$ are i.i.d. $\N-$valued random variable satisfying $\mathbb P(R_v \leq n)=q_n$ for all $n\geq0$. Although $G$ is directed, we only consider the weak connectivity, that is the connected components of $G$ or $W$ are those of the underlying undirected graph. Let $\rho \in G$ and denote by $W_\rho$ the connected component of $\rho$ in $W$ (with convention $W_\rho=\emptyset$ if $\rho \notin W$). 
    \begin{enumerate}
        \item If $(q_n)_{n\geq0}$ satisfies
        \begin{equation*}
            4(\Delta_+ + \Delta_-)\sum_{n \geq 0} \Delta_+^n \Delta_-^n (1-q_n) < 1,
        \end{equation*}
    then almost surely all connected components of $W$ are finite.
        \item If moreover we have for some $t>0$
        \begin{equation*}
            \sum_{n \geq 0} \Delta_-^n e^{t \Delta_+^n} (1-q_n) < \infty,
        \end{equation*}
    then there exists $\lambda>0$, depending only on $\Delta_+$, $\Delta_-$ and the $(q_n)_{n \geq 0}$, such that for every $n \in \N$, $$\P(|W_\rho| > n) \leq e^{-\lambda n}.$$
    \end{enumerate}
\end{thm}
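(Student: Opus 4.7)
Call $v\in G$ a \emph{witness} of $W_\rho$ if $B_{R_v}(v)\cap W_\rho\neq\emptyset$. Since $B_{R_v}(v)$ is (directed, hence weakly) connected and entirely wet, every witness actually satisfies $B_{R_v}(v)\subseteq W_\rho$, so that $W_\rho=\bigcup_{v\in V_\rho}B_{R_v}(v)$, where $V_\rho$ denotes the set of witnesses. Weak connectedness of $W_\rho$ then forces $V_\rho$ to be weakly connected under the relation ``the balls are weakly adjacent'' (they share an edge in the underlying undirected graph). In particular, if $|W_\rho|$ is large, one can extract a simple chain $v_0,v_1,\ldots,v_k$ of distinct witnesses, anchored at $\rho$ (i.e.\ $\rho\in B_{R_{v_0}}(v_0)$), with consecutive balls weakly adjacent; and $|W_\rho|=\infty$ implies the existence of an arbitrarily long such chain.

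For part~(1), I will compute a first-moment bound on the number of chains of length $k$. The witness $v_0$ lies at some directed distance $n_0$ from $\rho$; there are at most $\Delta_-^{n_0}$ such candidates, each contributing a factor $(1-q_{n_0})$ via the requirement $R_{v_0}>n_0$. Given $v_j$ with radius $r_j$, extending the chain to $v_{j+1}$ decomposes into three independent choices: pick a vertex of $B_{r_j}(v_j)$ (at most $\Delta_+^{r_j}$ candidates), take one of its weak neighbours ($\Delta_++\Delta_-$ choices), and pick $v_{j+1}$ as one of the $\Delta_-^{n_{j+1}}$ in-ancestors of that neighbour at distance $n_{j+1}$, weighted by $(1-q_{n_{j+1}})$. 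Summing over radii and exploiting independence of $(R_v)$, the expected number of chains of length $k$ is dominated by $C^{k+1}$ with
$$C \;=\; \sum_{n\geq0} 2\Delta_+^n(\Delta_++\Delta_-)\Delta_-^n(1-q_n)\;<\;1,$$
by hypothesis. Summability over $k$ together with Borel--Cantelli yields a.s.\ finiteness of every witness chain, hence $|W_\rho|<\infty$ almost surely.

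For part~(2), the same chain estimate is weighted by an exponential. Using $|W_\rho|\leq \sum_{v\in V_\rho}|B_{R_v}(v)|\leq\sum_j\Delta_+^{R_{v_j}}$ along a BFS enumeration of $V_\rho$ and applying the exponential Markov inequality $\mathbb{P}(|W_\rho|>N)\leq e^{-tN}\mathbb{E}[e^{t|W_\rho|}]$, each factor $(1-q_{n_{j+1}})$ above gets replaced by $e^{t\Delta_+^{n_{j+1}}}(1-q_{n_{j+1}})$. The modified reproduction constant is, up to multiplicative constants, $C_t=\sum_n \Delta_-^n e^{t\Delta_+^n}(1-q_n)$, which is finite by the second hypothesis for some $t>0$; by continuity, reducing $t$ further if needed keeps the full reproduction constant strictly below~$1$, which gives a geometric bound on $\mathbb{E}[e^{t|W_\rho|}]$ and therefore $\mathbb{P}(|W_\rho|>N)\leq e^{-\lambda N}$ for some $\lambda>0$.

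The main obstacle is the over-counting inherent to witness chains: the same cluster corresponds to many different chains, and the same witness might be reached via multiple chains. To make the first-moment argument rigorous, I would fix a canonical exploration of $V_\rho$ (for instance, BFS from the witness nearest to $\rho$, parenting each newly discovered witness by the first ball that touched it), so that chains in the estimate are branches of a rooted tree and each cluster contributes at most once per vertex. The estimate then becomes the analogue of an expected progeny size in a Galton--Watson tree with offspring mean $C$, and the two hypotheses translate, respectively, into subcriticality and into a finite exponential moment of offspring size.
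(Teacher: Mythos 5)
The present paper does not prove this theorem: it is quoted verbatim as Theorem~2.4 of the external reference \cite{Faipeur} and then applied to derive Lemma~\ref{cutset}, so there is no in-paper proof to compare against. Taken on its own, your plan (enumerate chains of overlapping wet balls, bound their expected number, compare with a subcritical branching process) is the natural one, but the central bookkeeping conflates the random radius $r_j=R_{v_j}$ of a witness with the distance $n_j$ from that witness to its anchor. You bound the branching factor by $|B_{r_j}(v_j)|\leq\Delta_+^{r_j}$ --- indexed by the radius --- yet immediately claim that the expected number of chains is dominated by $C^{k+1}$ with $C=\sum_n 2\Delta_+^n(\Delta_++\Delta_-)\Delta_-^n(1-q_n)$, where $\Delta_+^n$ is indexed by the distance. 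Averaging $\Delta_+^{R_{v_j}}$ over the law of $R_{v_j}$ restricted to $\{R_{v_j}>n_j\}$ yields $\mathbb{E}[\Delta_+^{R}\mathbf{1}_{R>n_j}]$, which exceeds $\Delta_+^{n_j+1}(1-q_{n_j})$ and moreover carries a tail correction $\sum_{s>n_j}\Delta_+^s(1-q_s)$ absent from your $C$; the stated bound therefore does not follow from the computation given. The correct decomposition enumerates, for each internal witness $v_j$, both the in-distance $n_j$ to its anchor and the out-distance $m_j<R_{v_j}$ to the vertex from which the chain continues, so that the radius constraint reads $R_{v_j}>\max(n_j,m_j)$; a geometric-series estimate on $\sum_{n,m}\Delta_-^n\Delta_+^m(1-q_{\max(n,m)})$ then produces a constant multiple of $\sum_n\Delta_+^n\Delta_-^n(1-q_n)$, matching the hypothesis.

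For part~(2) the same conflation recurs more seriously: the contribution of $v_{j+1}$ to $|W_\rho|$ is $\Delta_+^{R_{v_{j+1}}}$, not $\Delta_+^{n_{j+1}}$, so the factor that must be controlled is $\mathbb{E}[e^{t\Delta_+^R}\mathbf{1}_{R>n}]$ rather than $e^{t\Delta_+^n}(1-q_n)$, and relating it to $\sum_n\Delta_-^n e^{t\Delta_+^n}(1-q_n)<\infty$ requires an Abel-summation step and exploits the doubly exponential growth of $e^{t\Delta_+^n}$ to absorb the extra $\Delta_+^n$ factors; none of this is carried out. You do correctly note in the last paragraph that a canonical BFS exploration of the witnesses is needed, and that is the right device --- both to pass from $|W_\rho|=\infty$ to the existence of arbitrarily long simple chains (the ball-adjacency graph on witnesses need not be locally finite, so K\"onig's lemma does not apply out of the box) and to break the circularity in bounding $\mathbb{E}[e^{t|W_\rho|}]$ when the witness set itself depends on the radii --- but the proposal stops at naming the fix rather than implementing it.
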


\begin{proof}[Proof of Lemma \ref{cutset}]
    When conditioning on $\mathcal{T}$, $\A_{\rho_0}$ can be seen as a directed graph (already defined in the beginning of Section \ref{Backward exploration of the dynamics-bis}): the vertex set is $\A_{\rho_0}$, and we put an oriented edge from $\rho'$ to $\rho$ for each pair $\rho,\rho' \in \A_{\rho_0}$ such that $d(\rho, \rho')=1$. Therefore, $\A_{\rho_0}$ is a directed graph of bounded degree, where all the vertices have in-degree exactly $|B|$ and out-degree bounded by $|B|$.

    Therefore, we can apply the above Theorem with $\Delta_+=\Delta_-=|B|$, and we obtain exactly the hypothesis \eqref{condition_qn} as a sufficient condition for subcriticality and exponential decay of the Boolean percolation.
\end{proof}

We can now complete the proof of Theorem \ref{thm_general}.
\begin{proof}[Proof of Theorem \ref{thm_general}]
    The first step of the proof is to show that there is a.s. weak coalescence at $(0,0)$, regardless of the fact that $\rho_0$ is wet or dry.
    The key point is that even if $\rho_0$ is wet, one can find a dry cutset on which all the field values in the dynamics are independent of the initial condition. Then, using the update function, we can recursively compute everything after this dry cutset to determine the value at $(0,0)$.

    Let $\C$ be the first dry cutset for $(0,0)$, which is obtained by exploring all the active paths starting from $\rho$ and, for each, take the first dry mark encountered. The fact that $\C$ exists a.s. is a consequence of Lemma \ref{cutset}.
    Then, we consider a starting time $\tau_0$ preceding all the times carrying a mark of $\C$, i.e. $\tau_0$ such that every $\rho=(i,t,u) \in \C$ satisfies $t\in(\tau_0,0)$ (this is possible to do so because $\C$ is finite).
    Because they are all dry, according to Lemma \ref{lemma:X_rho_dry_marks}, the marks $(i,t,u) \in \C$ verify a.s.
    $$X^{\mathbf{0},\tau_0}_t(i)=\varphi(\mathbf{0}, u)$$
    and there is a.s. weak coalescence at $(i,t)$.
    It follows that there is a.s. weak coalescence at $(0,0)$.
    Indeed, let $X_0^*:= X^{\mathbf{0}, \tau_0}_0(0)$, and observe that $X_0^*$ is a measurable function of the update marks (it actually does not depend on $\tau_0$, as soon as it is chosen to be smaller that the infimum of times carrying an update mark of $\C$).
    For any $\tau<\tau_0$, if the dynamics started at time $\tau$ with initial configuration $\xi \in \Omega$ and the one started at $\tau_0$ with initial configuration $\mathbf{0}$ coincide at every mark of $\C$, then they will also coincide at $(0,0)$ almost surely.
    Thus, we have for all $\tau<\tau_0$, a.s.
    $$\{ \xi \in \Omega, X^{\xi,\tau}_0(0) = X_0^*)\} \supset \bigcap_{(i,t,u) \in \C} \{ \xi \in \Omega, X^{\xi,\tau}_t(i) = \varphi(\mathbf{0},u) \}.$$
    On the event of existence of $\C$, we get by Lemma \ref{lemma:X_rho_dry_marks} and finiteness of $\C$ that the $\mu$--measure of the right-hand side tends to 1 when $\tau$ goes to $-\infty$. Hence $\mu\left(\{ \xi \in \Omega, X^{\xi,\tau}_0(0) = X_0^*\}\right) \xrightarrow[\tau \to -\infty]{}1$, so we get weak coalescence at $(0,0)$.
    
    Everything has been done in a translation-invariant manner, so for all $i \in \Z^d$, we get a similar construction of $X_i^*$, and a.s. weak coalescence at $(i,0)$. Since there are only countably many sites in $\Z^d$, and according to Proposition \ref{prop:strong_CFTP}, it yields a realisation of $\mu$, which has been constructed as a factor of the update marks. This proves the first part of Theorem \ref{thm_general}, that is, $\mu$ is an FIID. It remains to show that $\mu$ can be well-approximated by finitely dependent fields.

    Let $l \geq 1$. Our goal is to construct a field $Y^*=(Y^*_i)_{i \in \Z^d}$ that is $l-$dependent and close to $X^*=(X_i^*)_{i \in \Z^d}$. The strategy is simply to truncate the factor map of $X^*$, to obtain $Y^*$ as an FIID with coding radius bounded by $\lfloor l/2 \rfloor$, so that it is $l-$dependent. We then use a union bound over all the sites of any finite box of $\Z^d$ to get the result.

    We do the construction only for $Y_0^*$, in a translation-invariant manner.
    Let $\C_l$ be the cutset (for $(0,0)$) of update marks at distance $\lfloor l/2 \rfloor$ from $\rho_0$. For all $\rho \in \C_l$, let $Y_\rho:= \varphi( \mathbf{0}, U_\rho)$.
    Then, define $Y_\rho$ recursively for every update mark $\rho$ such that $d(\rho_0,\rho)<\lfloor l/2 \rfloor$.
    Suppose that all the $Y_{\rho'}$ for $\rho'$ between $\rho$ and $\C_l$ have been defined\footnote{Note that it is possible that such an update mark $\rho'$ satisfies $d(\rho_0, \rho') \leq d(\rho_0,\rho) $, due to an active path from $(0,0)$ to $\rho'$ that does not passes through $\rho$. Thus, it is not possible to define the $Y_\rho$ by decreasing order of the distance from $\rho_0$.}; denote by $i$ the location (in $\Z^d$) of $\rho$; let
    $$Y_\rho:=\varphi(\eta, U_\rho)$$
    where for every $j \in B$, $\eta_j=Y_{\rho'}$ if there is a mark $\rho'$ at $i+j$ such that $d(\rho,\rho')=1$, and $\eta_j=0$ otherwise.
    At the end, let $Y_0^*:=Y_{\rho_0}$. It follows that $Y_0^*$ is a measurable function of the update marks located at sites at distance less than $\lfloor l/2 \rfloor$ from the origin.

    Defining similarly $Y_i^*$ for all $i \in \Z^d$, it yields an $l-$dependent field. It remains to check that it coincides with $X^*$ with high probability.
    Let us prove that every update mark $\rho$ such that $d(\rho_0,\rho) \leq \lfloor l/2 \rfloor$ satisfies
    \begin{equation}\label{eq:coincidence_dry_marks}
        \{\rho \text{ is dry} \} \as\subset \{ Y_\rho=\varphi(\mathbf{0}, U_\rho)\}.
    \end{equation}
    For this, we first prove that if $\rho$ is dry, then every $\rho'$ between $\rho$ and $\C_l$ satisfies $Y_{\rho'} \in S_{d(\rho, \rho')}$. If $\rho' \in \C_l$, we have $Y_{\rho'} = \varphi(\mathbf{0},U_{\rho'})$; since $\rho$ is dry, we have that $U_{\rho'} \leq q_{d(\rho,\rho')}$ so $Y_{\rho'} \in S_{d(\rho,\rho')}$ by property \eqref{coupling_q_L} of the update function. Then, for $\rho' \notin \C_l$, we again argue recursively: assume that the property holds for every $\rho''$ between $\rho'$ and $\C_l$; we therefore have that all the update marks $\rho''$ such that $d(\rho', \rho'')=1$ satisfy
    $$Y_{\rho''}\in S_{d(\rho,\rho'')} \subset S_{d(\rho,\rho')+1},$$
    because $d(\rho,\rho'') \leq d(\rho,\rho')+1$. Then, we have $U_{\rho'} \leq q_{d(\rho,\rho')}$ because $\rho$ is dry, so we get $Y_{\rho'} \in S_{d(\rho,\rho')}$ by \eqref{coupling_q_L}.
    Finally, we have that $Y_{\rho'} \in S_1$ for every $\rho'$ such that $d(\rho,\rho')=1$. Now, $\rho$ being dry also implies that $U_\rho \leq q_0$, so by \eqref{maximal_coupling}, we deduce that $Y_\rho= \varphi(\mathbf{0}, U_\rho)$, so \eqref{eq:coincidence_dry_marks} holds.

    If $\rho_0$ itself is dry, we directly obtain $Y^*_0=Y_{\rho_0}=\varphi(\mathbf{0}, U_{\rho_0})=X_0^*$.
    More generally, if the first dry cutset $\C$ for $(0,0)$ contains only marks at distance less than $\lfloor l/2 \rfloor$ from $\rho_0$, it follows that every $\rho=(i,t,u) \in \C$ satisfies $Y_\rho = \varphi(\mathbf{0}, u)=X_t^{\mathbf{0}, \tau_0}(i)$, where $\tau_0$ is defined as in the beginning of the proof.
    Then, the updates encoded by marks between $\rho_0$ and $\C$ produce the same values
    in the dynamics $(X_t^{\mathbf{0}, \tau_0})_{t \geq \tau_0}$ and in the definition of the $Y_\rho$'s. In the end, we get $Y_0^*=Y_{\rho_0}=X_0^{\mathbf{0}, \tau_0}(0)=X_0^*$.

    Therefore, it holds that $Y_0^* =X^*_0$ unless there exist (at least) $\lfloor l/2 \rfloor$ consecutive wet marks on an active path starting from $(0,0)$. Thus, according to Lemma \ref{cutset}, there exists $\lambda >0$ such that
    $$\P(X_0^* \neq Y_0^*) \leq \P(\exists \text{ a wet path of length $\lfloor l/2\rfloor$ starting from } (0,0)) \leq e^{-\lambda l},$$
    
    Now, by translation-invariance, for every $\Lambda \Subset \Z^d$, it holds that
    $$\P(X^*|_\Lambda \neq Y^*|_\Lambda)=\P(\exists i\in \Lambda, X^*_i \neq Y_i^*) \leq |\Lambda| e^{-\lambda l},$$ hence the result.
\end{proof}

\section{Proofs for the Gaussian models}\label{Proofs for the Gaussian models}
In this Section, we prove the results stated in the introduction for our Gaussian models.
For this, we will use some properties on the Gaussian density that we recall here.
First, for all $x \in \R$, we have the following equality of distributions: $\Norm(x,1)=x+\Norm(0,1)$.
We will use the notation $\mathbb P (\Norm(x,1) \in \cdot)$ to denote the probability that a random variable of law $\Norm(x,1)$ belongs to some subset of $\R$.
Then, for all $L>0$, the function $x\mapsto \mathbb P(\Norm(x,1) \in [-L,L])$ defined over $\R$ is maximal for $x=0$, decreasing on $\R^+$ and it is an even function.
Moreover, we have the following bound on the Gaussian tail: for every $L>0$ and $\sigma>0$,
    \begin{align}\label{eq:gaussian_tails}
        \mathbb{P}(|\mathcal{N}(0,\sigma^2)| > L)&=\dfrac{2}{\sqrt{2\pi \sigma^2}} \int_L^{+\infty} e^{-t^2/(2\sigma^2)} \d t \notag \\ &\leq \dfrac{2}{\sqrt{2\pi \sigma^2}} \int_L^{+\infty} \dfrac{t}{L}e^{-t^2/(2\sigma^2)} \d t \notag \\ &\leq \dfrac{2 \sigma}{\sqrt{2\pi}L} e^{-L^2/(2\sigma^2)}.
    \end{align}

\subsection{Proof of Theorem \ref{thm_mu_L}}
Theorem \ref{thm_mu_L} will be deduced from Theorem \ref{thm_compact}, part 1 by proving that when $|\varepsilon|$ is small enough, $\mu_\varepsilon^L$ satisfies the high-noise assumption, and part 2 follows by \eqref{eq:distance_to_l-dep}, of which we give a proof below.

\begin{lemme}\label{lemma_eta_L}
One has $\gamma(\mu_\varepsilon^L) \xrightarrow[\varepsilon \to 0]{} 1$.
\end{lemme}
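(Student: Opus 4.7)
The plan is to exploit the explicit integral formula derived just above the lemma statement,
$$\gamma(\mu_\varepsilon^L) = \int_{-L}^L \inf_{|x| \leq L} \overline{g_{\varepsilon x}}^L(t) \, \d t,$$
and to show that the integrand converges uniformly in $t \in [-L,L]$ to $\overline{g_0}^L(t)$ as $\varepsilon \to 0$. Since $\overline{g_0}^L$ is a probability density on $[-L,L]$, integrating the uniform limit yields $1$, and the conclusion follows from $\gamma(\mu_\varepsilon^L) \leq 1$.

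First, I would establish uniform convergence of the un-normalised density $g_{\varepsilon x}(t) = \tfrac{1}{\sqrt{2\pi}} e^{-(t-\varepsilon x)^2/2}$ to $g_0(t)$ on the compact set $(x,t) \in [-L,L]^2$. This is immediate because $(t - \varepsilon x)^2 = t^2 - 2 \varepsilon x t + \varepsilon^2 x^2$ converges to $t^2$ uniformly on $[-L,L]^2$ as $\varepsilon \to 0$, and the exponential is uniformly continuous on the bounded range of exponents involved. Next, I would observe that the normalising constant $Z_{\varepsilon x} := \int_{-L}^L g_{\varepsilon x}(s) \, \d s$ converges to $Z_0 := \int_{-L}^L g_0(s) \, \d s > 0$ uniformly in $x \in [-L,L]$ by the same argument, and stays bounded below (for $|\varepsilon| \leq 1$ and $|x| \leq L$, the integrand $g_{\varepsilon x}(s)$ is itself bounded below on $[-L,L]$ by $\tfrac{1}{\sqrt{2\pi}}e^{-2L^2}$, so $Z_{\varepsilon x} \geq \tfrac{2L}{\sqrt{2\pi}}e^{-2L^2}$). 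Combining the two gives uniform convergence of the quotient $\overline{g_{\varepsilon x}}^L(t) = g_{\varepsilon x}(t)/Z_{\varepsilon x}$ to $\overline{g_0}^L(t)$ on $[-L,L]^2$.

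To finish, given any $\delta > 0$ I would take $\varepsilon$ small enough so that $|\overline{g_{\varepsilon x}}^L(t) - \overline{g_0}^L(t)| \leq \delta$ for all $(x,t) \in [-L,L]^2$, which forces
$$\inf_{|x| \leq L} \overline{g_{\varepsilon x}}^L(t) \geq \overline{g_0}^L(t) - \delta$$
for every $t \in [-L,L]$. Integrating over $[-L,L]$ and using $\int_{-L}^L \overline{g_0}^L(t) \, \d t = 1$ yields $\gamma(\mu_\varepsilon^L) \geq 1 - 2L\delta$. Since $\delta > 0$ was arbitrary, this gives $\gamma(\mu_\varepsilon^L) \to 1$. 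There is no real obstacle in this argument; the only mildly delicate point is the uniform lower bound on $Z_{\varepsilon x}$, which is handled as above.
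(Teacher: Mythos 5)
Your proof is correct and follows the same overall strategy as the paper: bound the difference between $\overline{g_{\varepsilon x}}^L$ and $\overline{g_0}^L$ uniformly over $(x,t) \in [-L,L]^2$ (handling the normalising constant $Z_{\varepsilon x}$ via a uniform lower bound) and then integrate. The only distinction is that the paper carries out the estimate quantitatively via a Lipschitz bound in the mean parameter, which yields the rate $1-\gamma(\mu_\varepsilon^L)=O(|\varepsilon|)$, whereas you argue qualitatively via uniform continuity on a compact set; this is perfectly sufficient for the stated limit, which is all that is used later.
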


\begin{proof}
We deduce the result from Lipschitz continuity of $\overline{g_x}^L(t)$ with variable $x$.

Fix $t \in [-L,L]$.
First, it is easy to derive from mean value inequality that for all $x,y \in \R$, $|g_x(t) - g_y(t)| \leq c |x-y|$ with $c=\frac{e^{-1/2}}{\sqrt{2\pi}}$.
Set 
$$\lambda_\varepsilon = \inf_{|x| \leq L} \int_{-L}^L g_{\varepsilon x}(s) \d s = \inf_{|x| \leq L} \mathbb{P}(\Norm(\varepsilon x, 1) \in [-L,L])=\mathbb{P}(\Norm(\varepsilon L,1) \in [-L,L]).$$

Then, for all $x,y \in [L, L]$,
\begin{align*}
    |\overline{g_{\varepsilon x}}^L(t) - \overline{g_{\varepsilon y}}^L(t)| &= \left| \dfrac{g_{\varepsilon x}(t)}{\int_{-L}^L g_{\varepsilon x}(s) \d s} - \dfrac{g_{\varepsilon y} (t)}{\int_{-L}^L g_{\varepsilon y}(s) \d s} \right|\\
    &\leq \dfrac{|g_{\varepsilon x}(t) - g_{\varepsilon y} (t)|}{\int_{-L}^L g_{\varepsilon x} (s) \d s} + g_{\varepsilon y}(t) \left| \dfrac{1}{\int_{-L}^L g_{\varepsilon x}(s) \d s} - \dfrac{1}{\int_{-L}^L g_{\varepsilon y}(s) \d s}\right|\\
    &\leq \dfrac{c}{\lambda_\varepsilon} |\varepsilon x-\varepsilon y| + \dfrac{g_{\varepsilon y}(t)}{\lambda_\varepsilon^2} \left|\int_{-L}^L (g_{\varepsilon y}(s) - g_{\varepsilon x}(s)) \d s  \right| \\
    &\leq \dfrac{c|\varepsilon|}{\lambda_\varepsilon} |x-y| + \dfrac{2Lc}{\lambda_\varepsilon^2} |\varepsilon x- \varepsilon y| \leq \dfrac{3Lc|\varepsilon|}{ \lambda_\varepsilon^2} |x-y|,
\end{align*}
since $g_{\varepsilon y}(t) \leq 1$, $\lambda_\varepsilon^2 \leq \lambda_\varepsilon$, and assuming $L\geq 1$ (if not, just replace the $L$ by 1 in the last inequality).
Then, let $\eta(\varepsilon,L):=1-\gamma(\mu_\varepsilon^L)$ and use Lipschitz continuity to get the result:
\begin{align*}
    \eta(\varepsilon,L) &= 1- \int_{-L}^L \inf_{|x| \leq L } \overline{g_{\varepsilon x}}^L(s) \d s\\
    &= \int_{-L}^L \overline{g_0}^L(s) - \inf_{|x| \leq L } \overline{g_{\varepsilon x}}^L(s) \d s\\
    &= \int_{-L}^L \sup_{|x| \leq L} (\overline{g_0}^L(s) -\overline{g_{\varepsilon x}}^L(s) ) \d s\\
    &\leq 2L \dfrac{3Lc}{\lambda_\varepsilon^2} |\varepsilon| L = \dfrac{6L^3c}{\lambda_\varepsilon^2} |\varepsilon|.
\end{align*}

Finally, we just need to check that $\lambda_\varepsilon$ does not cancel at the limit $\varepsilon \to 0$ to conclude.
One can notice that $\lambda_\varepsilon = \mathbb P(\Norm(0,1) \in [(-1+\varepsilon) L,(1+\varepsilon)L]$ remains greater than $\mathbb P(\Norm(0,1) \in [0,2L])$ for $|\varepsilon| \leq 1$, hence the result.
\end{proof}

\begin{prop}\label{prop:radius_controls_distance_l-dep}
    Assume that $\mu$ is an FIID with coding radius $R$.
    Then, for every $l\in \N$, there exists an $l$--dependent distribution $\nu_l$ on $\Omega$ such that for all $\Lambda \Subset \Z^d$,
    $$d_{\mathrm{TV}}(\mu|_\Lambda, \nu_l |_\Lambda) \leq |\Lambda| \P(2R > l).$$
\end{prop}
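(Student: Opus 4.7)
The plan is to \emph{truncate} the factor map. Let $F : \mathcal{Y}^{\Z^d} \to \Omega$ be a coding from an i.i.d. process $Y = (Y_i)_{i \in \Z^d}$ of marginal $\nu$ to $\mu$, and set $r := \lfloor l/2 \rfloor$. Fix an arbitrary $s_0 \in S$ and define a new map $F^l : \mathcal{Y}^{\Z^d} \to \Omega$ coordinate by coordinate by
\begin{equation*}
    F^l(y)_i :=
    \begin{cases}
        F(y)_i & \text{if } F(y')_i = F(y)_i \text{ for every } y' \in \mathcal{Y}^{\Z^d} \text{ with } y'|_{B^i_r} = y|_{B^i_r}, \\
        s_0 & \text{otherwise}.
    \end{cases}
\end{equation*}
Both the defining condition and the common value $F(y)_i$ (when the condition holds) are determined by $y|_{B^i_r}$, so $F^l(y)_i$ is a measurable function of $y|_{B^i_r}$; by translation-invariance of $F$, $F^l$ commutes with translations. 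I take $\nu_l$ to be the law of $F^l(Y)$.

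The first step is to check $l$-dependence of $\nu_l$. For any subsets $A, B \subset \Z^d$ with $d(A,B) > l$, the triangle inequality and $2r \leq l$ imply that $\bigcup_{i \in A} B^i_r$ and $\bigcup_{j \in B} B^j_r$ are disjoint. Hence $F^l(Y)|_A$ and $F^l(Y)|_B$ are measurable functions of disjoint families among the independent variables $(Y_k)_{k \in \Z^d}$, and are therefore independent.

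The second step is the total-variation bound, obtained through the natural coupling $(F(Y), F^l(Y))$ of $\mu$ and $\nu_l$ on the same i.i.d. input. On the event $\{R_i(Y) \leq r\}$ the very definition of the coding radius ensures that the condition defining $F^l$ is met at $i$, so $F^l(Y)_i = F(Y)_i$; hence $\{F^l(Y)_i \neq F(Y)_i\} \subseteq \{R_i(Y) > r\}$. A union bound gives
\begin{equation*}
    d_{\mathrm{TV}}(\mu|_\Lambda, \nu_l|_\Lambda) \leq \P\big(\exists i \in \Lambda : F(Y)_i \neq F^l(Y)_i\big) \leq \sum_{i \in \Lambda} \P(R_i(Y) > r) = |\Lambda|\, \P(R > r),
\end{equation*}
using stationarity. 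Since $R$ is $\N$-valued, a case check on the parity of $l$ shows $\{R > \lfloor l/2 \rfloor\} = \{2R > l\}$, which yields the desired estimate.

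No step is genuinely difficult; the only care required is to ensure that $F^l(y)_i$ is really a function of $y|_{B^i_r}$ alone, which is precisely what the somewhat verbose case distinction in the definition guarantees.
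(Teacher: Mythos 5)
Your proof is correct and follows essentially the same approach as the paper's: truncate the coding to a ball of radius $\lfloor l/2\rfloor$ around each site, observe that the resulting field is $l$-dependent because the relevant dependency regions are disjoint, and conclude with the natural coupling and a union bound. The only cosmetic difference is the fall-back on the bad event $\{R_i(Y) > r\}$: you output a fixed $s_0 \in S$, while the paper instead applies $F$ to the configuration $Y$ zeroed out outside $B^i_r$ (which sidesteps any measurability worry about the defining condition, though that condition is exactly $\{R_i(y)\le r\}$, whose measurability is implicitly assumed throughout anyway).
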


\begin{proof}
    By assumption, there exists an i.i.d. process $Y=(Y_i)_{i \in \Z^d}$ and a factor map $F$ such that $F(Y)$ has law $\mu$.
    Let $l\in \N$ and $r=\lfloor l/2 \rfloor$.
    For every $i \in \Z^d$, let $Y^i$ be the truncation of $Y$ to the ball $B^i_r$, that is, $Y^i_j = \ind{j \in B^i_r} Y_j$ for all $j \in \Z^d$.
    Set $Z^l = (F(Y^i)_i)_{i \in \Z^d}$ and call $\nu_l$ the distribution of $Z^l$.
    Then, if two subsets $A$ and $B$ of $\Z^d$ are at distance greater than $l\geq 2r$, it follows that $Z^l|_A$ and $Z^l|_B$ are factors of independent random fields (which are respectively the restriction of $Y$ to $\bigcup_{i\in A}B^i_r$ and $\bigcup_{j\in B}B^j_r$, which are disjoint).
    Thus, $\nu_l$ is $l$--dependent.

    Let $\Lambda \Subset \Z^d$.
    For every $i \in \Z^d$, on the event $\{R_i(Y) \leq r\}$ we have $Z^l_i=F(Y)_i$.
    Therefore, on $\bigcap_{i \in \Lambda}\{R_i(Y) \leq r\}$, $Z^l|_\Lambda$ and $F(Y)|_\Lambda$ coincide.
    It yields a coupling of the measures $\mu|_\Lambda$ and $\nu_l |_\Lambda$ such that the probability that the two realisations coincide in the coupling is greater than
    $$\P\left(\bigcap_{i \in \Lambda}\{R_i(Y) \leq r\}\right) \geq 1- |\Lambda| \P(R>r),$$
    by a union bound.
    With the characterisation of the total variation distance as one minus the maximal coupling probability, it gives the result.
\end{proof}

\begin{proof}[Proof of Theorem \ref{thm_mu_L}]
    For $\mu=\mu_\varepsilon^L$, the local Markov property \eqref{Markov_property} holds with boundary set $B=B^0_1-\{0\}$, hence $|B| =2d$.
    According to Lemma \ref{lemma_eta_L}, $\gamma(\mu_\varepsilon^L)$ tends to $1$ when $\varepsilon$ goes to $0$, so that for $|\varepsilon|$ small enough, this quantity stays above $1-1/(2d)$.
    In this case, it follows from Theorem \ref{thm_compact} that $\mu_\varepsilon^L$ is an FFIID with exponential tails.
    This proves part 1 of Theorem \ref{thm_mu_L}, and part 2. follows immediately thanks to Prop \ref{prop:radius_controls_distance_l-dep}, since the coding radius $R$ of the factor map for $\mu_\varepsilon^L$ satisfies $\P(R>n) \leq e^{-cn}$ for some $c>0$ and all $n\geq 1$ (see the end of the proof of Theorem \ref{thm_compact}).
\end{proof}

\subsection{Proof of Theorem \ref{thm_mu}}
We prove Theorem \ref{thm_mu} as a special case of Theorem \ref{thm_general}. For this, the main assignment is to find suitable sequences $(S_n)_{n \geq 1}$ and $(q_n)_{n \geq 0}$ satisfying \eqref{link_L0_p0}, \eqref{link_Ln_pn}, \eqref{condition_qn} and \eqref{condition_Ln}.
We will search for $S_n$ of the form $[-L_n,L_n]$, where $(L_n)_{n\geq 1}$ is an increasing sequence of real numbers that goes to $+\infty$.

\begin{proof}[Proof of Theorem \ref{thm_mu}]
First, recall that $\mu_\varepsilon$ is a stationary Markov random field with boundary set $B=B_1-\{0\}$, so that $|B| = 2d$.
Then, for all $\eta \in S^B$, $\mu_\varepsilon( X_0 \in \cdot \mid X|_B=\eta)$ is Gaussian of mean $\frac{\varepsilon}{2d}\sum_{j \sim i} \eta_j$ and variance 1. Therefore,
$\tilde \gamma(\mu_\varepsilon)$ corresponds to the maximal probability for realisations of $\Norm(\varepsilon x, 1)$ for $x \in S_1=[- L_1, L_1]$ to coincide in a coupling and belong to $[-L_1,L_1]$, so we have
\begin{equation*}
    \tilde \gamma(\mu_\varepsilon) = \int_{-L_1}^{L_1} \inf_{|x| \leq L_1 } g_{\varepsilon x}(t) \d t.
\end{equation*}

To get \eqref{condition_qn}, one can choose $q_n = 1-a e^{-(2d)^n}$ with $a>0$ small enough.
Then, we want to find a sequence $(L_n)_{n\geq 1}$ that respects the links with $(q_n)_{n \geq 0}$ prescribed by \eqref{link_L0_p0} and \eqref{link_Ln_pn}, taking $S_n=[-L_n,L_n]$.
One can rewrite \eqref{link_Ln_pn} as 
$$\forall n \geq 1, \inf_{|x| \leq L_{n+1}} \mathbb{P}(| \mathcal{N}(\varepsilon x,1) | \leq L_n) \geq q_n.$$
The infimum is achieved for $x=\pm L_{n+1}$ so we need, for all $n \geq 1$, $$\mathbb P(|\mathcal{N}(\varepsilon L_{n+1},1)| > L_n) \leq 1-q_n = ae^{-(2d)^n}.$$
We have
\begin{align*}
    \mathbb{P}(|\mathcal{N}(\varepsilon L_{n+1},1)| > L_n) &=\mathbb P(\mathcal{N}(0,1) > L_n - \varepsilon L_{n+1})+\mathbb P(\mathcal{N}(0,1) < -L_n - \varepsilon L_{n+1}) \\&\leq 2\mathbb P(\mathcal{N}(0,1) > L_n - |\varepsilon| L_{n+1}).
\end{align*}
By the Gaussian tail bound \eqref{eq:gaussian_tails}, we get that this is lower than $ae^{-(2d)^n}$ if
\begin{equation}\label{croissance_Ln}
    L_n-|\varepsilon| L_{n+1} \geq \sqrt{2}\sqrt{(2d)^n - \log(a)}.
\end{equation}
Informally, we want $L_{n+1}$ bigger than $L_n$ but $|\varepsilon| L_{n+1}$ smaller that $L_n$, so we aim to take
$L_n \asymp \sqrt{|\varepsilon|} L_{n+1}$.
By choosing $L_n=L_1 |\varepsilon|^{-(n-1)/2}$ for some $L_1>0$, we have
$$L_n-|\varepsilon| L_{n+1} = L_1 \sqrt{|\varepsilon|}^{-(n-1)}(1-\sqrt{|\varepsilon}|),$$
which implies \eqref{croissance_Ln} so \eqref{link_Ln_pn} if $L_1$ is large enough and $|\varepsilon|$ small enough.

Repeating the last computation of the proof of Lemma \ref{lemma_eta_L}, observe that $\tilde \gamma(\mu_\varepsilon) \xrightarrow[\varepsilon \to 0]{}1$, so \eqref{link_L0_p0} holds for $|\varepsilon|$ small enough.

We finally check that the sequence $(L_n)_{n \geq 1}$ we defined satisfies \eqref{condition_Ln}, that is, we prove that
$$\mu_\varepsilon(\{\xi \in \Omega: \forall i \in \Z^d, |\xi_i| \leq L_{\max(n, \Vert i\Vert_1)} \}) \xrightarrow[n\to +\infty]{}1.$$

    Set for all $n\geq1$, $A_n=\{\xi \in \Omega: \forall i \in \Z^d, |\xi_i| \leq L_{\max(n, \Vert i \Vert_1)} \}$.
    When $(X_i)_{i \in \Z^d}$ is distributed according to $\mu_\varepsilon$, for all $i\in \Z^d$ the marginal law of $X_i$ is $\Norm(0, \sigma^2)$,
    with $\sigma^2 = \operatorname{Var}(X_i)$ (this does not depend on $i$ by stationarity). One can see, from Remark \ref{rem:def_gaussian_field}, that $$1 \leq \sigma^2 =  \Gamma(0,0) \leq 1+ \sum_{n \geq 1} \varepsilon^{2n} = \dfrac{1}{1-\varepsilon^2}.$$
    
    Then, by a union bound,
    \begin{align*}
        1-\mu_\varepsilon(A_n) &\leq \sum_{i \in B^0_n}  \mu_\varepsilon(|X_i| > L_n) + \sum_{i \notin B^0_n} \mu_\varepsilon (|X_i| > L_{\Vert i\Vert_1}) \\
        &\leq |B^0_n|\; \mathbb P (|\Norm(0, \sigma^2)|>L_n) + \sum_{k>n} |\partial B^0_k|\; \mathbb P (|\Norm(0, \sigma^2)|>L_k).
    \end{align*}
    The size of the balls and spheres of $\Z^d$ are of order given by $|B^0_n| \asymp n^d$ and $|\partial B^0_n| \asymp n^{d-1}$.
    Let $\tilde c>0$ be a constant such that $|B^0_n| \leq \tilde c n^d$ and $|\partial B^0_n| \leq \tilde c n^{d-1}$, and set $c= \dfrac{2 \sigma}{\sqrt{2\pi}} \tilde c$.

    Therefore, by the Gaussian tail bound \eqref{eq:gaussian_tails},
    $$1-\mu_\varepsilon(A_n) \leq c n^d \dfrac{e^{-L_n^2/(2\sigma^2)}}{L_n} + c \sum_{k>n} k^{d-1} \dfrac{e^{-L_k^2/(2\sigma^2)}}{L_k}.$$
    To get that this quantity tends to 0 as $n$ tends to $+ \infty$, we need the right-hand side series to converge and $n^d e^{-L_n^2/(2\sigma^2)}/L_n \to 0$.
    This is true when $n^{d-1} e^{-L_n^2/(2\sigma^2)}/L_n \leq \frac{1}{n^2}$ for sufficiently large $n$, which clearly holds for the sequence $L_n=L_1 |\varepsilon|^{-(n-1)/2}$.
\end{proof}

\appendix
\section{Appendix: construction of the update function}
In this Appendix, we present an explicit construction of the update function $\varphi$ of the dynamics, in the case where $S \subset \R$.
Returning to the general setting, $\mu$ is a stationary Markov random field distribution on $S^{\Z^d}$, with boundary set $B \Subset \Z^d \setminus\{0\}$. We consider the family of conditional distributions $\pi^\eta=\mu(X_0 \in \cdot \mid X|_B=\eta), \eta \in S^B$; $\gamma=\gamma(\mu)$ is the maximal coupling probability of distributions $\pi^\eta$ for $\eta \in S^B$.

For all $\eta \in S^B$, let $F(s \mid \eta):=\pi^\eta((-\infty,s])$ for all $s \in \R$ be the cumulative distribution function of $\pi^\eta$.
First, the classical (up to slight variations) construction of a maximal coupling of laws $\pi^\eta$ works as follows:
for all $s\in \R$, let $$R(s):=\Big(\bigwedge_\eta \pi^\eta\Big)((-\infty,s]) \et \widehat F(s \mid \eta):=F(s \mid \eta)-R(s)$$ (notice that $R(s) \xrightarrow[s\to +\infty]{}\gamma$ and $F(s \mid \eta) \geq R(s)$); then, set
$$\forall \eta \in S^B, \forall u\in[0,1], \varphi(\eta,u)=R^{-1}(u) \ind{u \leq \gamma}+\widehat F^{-1}(u-\gamma \mid \eta) \ind{u>\gamma},$$
where generically the notation $F^{-1}$ stands for the generalised inverse of a non-decreasing function $F$ (i.e. $F^{-1}(y)=\inf\{x : F(x)>y\})$.
It is clear that $\varphi$ satisfies \eqref{maximal_coupling_phi_L}, and easy to check that $\varphi(\eta,U)$ has distribution $\pi^\eta$ if $U \sim \mathcal{U}[0,1]$.

We now aim to construct a function $\varphi$ that satisfies \eqref{maximal_coupling} and \eqref{coupling_q_L}, which are the conditions imposed on the update function with respect to the levels. We suppose that the levels are of the form $S_n=[-L_n,L_n]$, for all $n \geq1$.
Recall that $\widetilde \gamma=\widetilde \gamma(\mu)$ is the maximal probability for random variables of laws $\pi^\eta, \eta \in S_1^B$ to coincide in a coupling, with the additional requirement that the common value belongs to $[-L_1,L_1]$.

For all $s \in \R$, let
$$\widetilde R(s):=\Big(\bigwedge_{\eta \in S_1^B} \pi^\eta\Big)((-L_1, s \wedge L_1]),$$
so that $\widetilde R(s)=0$ for $s \leq -L_1$ and $\widetilde R(s)=\tilde \gamma$ for $s\geq L_1$.
Let also for all $\eta \in S^B$ and all $a<b$
$$F(a,b \mid \eta):= \pi^\eta((-a,b]).$$
We introduce a special notation in the case $b=-a=L_n$: denote $q_n(\eta)=F(-L_n,L_n \mid \eta).$
Observe that, then, $q_n \leq \inf_{\eta \in S^B_{n+1}} q_n(\eta)$ (by \eqref{link_Ln_pn}).
Finally, for all $\eta \in S^B$ and $s \in \R$ let 
$$\widehat F_0(s \mid \eta):= \begin{cases}F(-L_1, s\wedge L_1\mid \eta)-\widetilde R(s) & \si \eta \in S_1^B \\ F(-L_1, s\wedge L_1\mid \eta) & \otw\end{cases}$$
and for all $n\geq 1$, let
$$\widehat F_n(s \mid \eta):= F(-L_{n+1}, s\wedge L_{n+1}\mid \eta)-F(-L_n,s\wedge L_n \mid \eta).$$
All these functions are non-decreasing so their generalised inverses are well-defined.

\begin{prop}
    For all $\eta \in S_1^B$ and $u\in [0,1]$, let 
    $$\varphi(\eta,u)= \widetilde R^{-1}(u) \ind{u \leq \tilde \gamma} + \sum_{n\geq 0} \widehat F_n^{-1}(u-q_n(\eta) \mid \eta) \ind{q_n(\eta) <u \leq q_{n+1}(\eta)}$$
    and for $\eta \in S^B \setminus S_1^B$, let 
    $$\varphi(\eta,u)= \sum_{n\geq 0} \widehat F_n^{-1}(u-q_n(\eta) \mid \eta) \ind{q_n(\eta) <u \leq q_{n+1}(\eta)}$$
    with convention $q_0(\eta):= \widetilde\gamma$.
    Then, if $U \sim \mathcal{U}[0,1]$, it follows that $\varphi(\eta,U)$ is distributed according to $\pi^\eta.$
    Moreover, for all $\eta \in S^B$ and $u\in[0,1]$, one has
    $$\eta \in S_1^B \et u \leq q_0 \implies \varphi(\eta,u)=\varphi(\mathbf{0},u) \in [-L_1,L_1]$$
    and for all $n \geq 1$,
    $$\eta \in S_{n+1}^B \et u \leq q_n \implies \varphi(\eta,u) \in [-L_n,L_n].$$
\end{prop}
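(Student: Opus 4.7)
The plan is to verify the three assertions separately: the distribution identity for $\varphi(\eta, U)$ is the delicate one, and I would tackle it first by decomposing $[0,1]$ into the intervals on which the different branches of $\varphi(\eta, \cdot)$ are active. For $\eta \in S_1^B$ these intervals are $[0, \tilde\gamma]$, $(\tilde\gamma, q_1(\eta)]$ and $(q_n(\eta), q_{n+1}(\eta)]$ for $n\geq 1$; their pairwise disjointness together with $q_n(\eta) \to 1$ makes them a partition of $[0,1]$ up to a null set. On each piece, $\varphi(\eta, \cdot)$ is the generalised inverse of a non-decreasing function, so the standard pushforward identity applies: if $U$ is uniform on an interval $[a,b]$ and $G$ is non-decreasing with range contained in $[a,b]$, then $G^{-1}(U)$ has distribution equal to the Lebesgue--Stieltjes measure $\mathrm{d}G$.

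Applying this piece by piece: on $[0,\tilde\gamma]$, $\widetilde R^{-1}(U)$ produces the measure $\bigwedge_{\eta' \in S_1^B} \pi^{\eta'}$ restricted to $[-L_1, L_1]$; on $(\tilde\gamma, q_1(\eta)]$, shifting $U$ by $-\tilde\gamma$ brings us into the domain of $\widehat F_0(\cdot \mid \eta)$, whose inverse pushes forward onto $\pi^\eta|_{[-L_1,L_1]}$ minus the common component already accounted for; and on $(q_n(\eta), q_{n+1}(\eta)]$ for $n\geq 1$, the pushforward of $\widehat F_n^{-1}(\cdot - q_n(\eta) \mid \eta)$ is $\pi^\eta$ restricted to the annulus $[-L_{n+1}, L_{n+1}] \setminus [-L_n, L_n]$. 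The supports of these contributions are pairwise disjoint and their masses telescope to $1$, so summing them recovers $\pi^\eta$. The case $\eta \in S^B \setminus S_1^B$ is analogous, the initial interval $[0, \tilde\gamma]$ being absent and $\widehat F_0$ providing directly the full restriction of $\pi^\eta$ to $[-L_1, L_1]$, once the convention on $q_0(\eta)$ is set up consistently.

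The two coupling properties then follow by a short case analysis. For the first, hypothesis \eqref{link_L0_p0} gives $q_0 \leq \tilde\gamma$, so the condition $u \leq q_0$ forces us into the single branch $\varphi(\eta, u) = \widetilde R^{-1}(u)$, which manifestly does not depend on $\eta$ and takes values in $[-L_1, L_1]$ by construction of $\widetilde R$. For the second, hypothesis \eqref{link_Ln_pn} ensures that $\eta \in S_{n+1}^B$ implies $q_n(\eta) \geq q_n$; hence $u \leq q_n$ puts $u$ either into $[0,\tilde\gamma]$ (if $\eta \in S_1^B$) or into one of the branches indexed by some integer $k$ with $k+1 \leq n$. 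Since $\widetilde R^{-1}$ lands in $[-L_1, L_1]$ and $\widehat F_k^{-1}(\cdot \mid \eta)$ lands in $[-L_{k+1}, L_{k+1}]$, in every subcase $\varphi(\eta, u) \in [-L_n, L_n]$.

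The main technical hurdle lies in the bookkeeping for the distribution identity: one must track which branch of $\varphi(\eta, \cdot)$ is active on which subinterval and verify that the resulting pieces of $\pi^\eta$ reassemble consistently, with a particularly subtle handling of the convention defining $q_0(\eta)$ across the two regimes $\eta \in S_1^B$ and $\eta \notin S_1^B$.
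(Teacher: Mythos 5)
Your proposal is correct and follows essentially the same route as the paper: you decompose $[0,1]$ into the intervals $[0,\tilde\gamma]$, $(q_n(\eta),q_{n+1}(\eta)]$ matching the branches of $\varphi(\eta,\cdot)$, identify the pushforward on each piece, and sum; the paper verifies the equivalent identity $\P(\varphi(\eta,U)\leq s)=F(-\infty,s\mid\eta)$ by the same telescoping computation on the CDF. The short case analysis you give for the two coupling properties is exactly what the paper invokes when it says the properties are ``clear'' from \eqref{link_L0_p0}, \eqref{link_Ln_pn} and the ranges of $\widetilde R^{-1}$ and $\widehat F_k^{-1}$, and you correctly flag the need to set the convention on $q_0(\eta)$ differently in the two regimes $\eta\in S_1^B$ versus $\eta\notin S_1^B$.
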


\begin{proof}
    It is clear that the last two properties on $\varphi$, corresponding to \eqref{maximal_coupling} and \eqref{coupling_q_L}, are verified. This follows from \eqref{link_L0_p0} and \eqref{link_Ln_pn} and the fact that for all $n \geq 1$, $\widehat F_{n-1}^{-1}$ takes values in $[-L_n,L_n]$.
    Then, it remains to prove that $\varphi$ generates the right distribution.
    Let $U \sim \mathcal{U}[0,1]$, and denote by $\P$ the probability with respect to $U$.
    Let $s \in \R$ and $\eta \in S^B$. Suppose that $\eta \in S_1^B$ (the proof being similar if not). One has
    \begin{align*}
        \P( \varphi(\eta,U) \leq s) &= \P(\widetilde R^{-1}(U) \leq s \et U \leq \widetilde \gamma)+  \sum_{n\geq 0} \P\left( \widehat F_n^{-1}(U-q_n(\eta) \mid \eta) \leq s \et q_n(\eta) <U \leq q_{n+1}(\eta) \right) \\
        &=\P(U \leq \widetilde R(s)) + \sum_{n\geq0} \P\left(0<U-q_n(\eta) \leq \widehat F_n(s \mid \eta) \right) \\
        &=\widetilde R(s)+\sum_{n\geq0} \widehat F_n(s \mid \eta) \\
        &= F(-\infty,s \mid \eta),
    \end{align*}
    where in the second equality, we use the fact that $\widetilde R(s) \leq \widetilde\gamma$ and $\widehat F_n(s \mid \eta) \leq \widehat F_n(L_{n+1} \mid \eta)=q_{n+1} (\eta)-q_n(\eta).$
    This shows that $\varphi(\eta,U)$ has distribution $\pi^\eta$.
\end{proof}

\bibliographystyle{plain}
\bibliography{ref}

\end{document}